\documentclass[11pt,reqno]{amsart}

\usepackage{amsmath,amsthm,amssymb,tocvsec2,mathrsfs}
\usepackage[tmargin=1.4in,bmargin=1.2in,rmargin=1.4in,lmargin=1.4in]{geometry}
\usepackage[breaklinks=true]{hyperref}
\usepackage[UKenglish]{babel}

\theoremstyle{plain}

\newtheorem{theorem}{Theorem}[section]
\newtheorem{corollary}[theorem]{Corollary}
\newtheorem{proposition}[theorem]{Proposition}
\newtheorem{lemma}[theorem]{Lemma}

\theoremstyle{definition}

\newtheorem{remark}[theorem]{Remark}

\numberwithin{equation}{section}
\numberwithin{table}{section}

\newcommand{\Beta}{\mathrm{B}}
\newcommand{\bb}{\mathbf{b}}
\newcommand{\bm}{\mathbf{m}}
\newcommand{\bu}{\mathbf{u}}
\newcommand{\bv}{\mathbf{v}}

\newcommand{\bx}{\mathbf{x}}
\newcommand{\by}{\mathbf{y}}
\newcommand{\bone}{\mathbf{1}}
\newcommand{\bzero}{\mathbf{0}}
\newcommand{\comp}{\mathrel{\circ}}
\newcommand{\dequals}{\stackrel{d}{=}}
\newcommand{\diag}{\mathop{\mathrm{diag}}}
\newcommand{\eps}{\varepsilon}
\newcommand{\expn}{\mathbb{E}}
\newcommand{\hx}{\hat{x}}
\newcommand{\hy}{\hat{y}}
\newcommand{\Id}{\mathrm{Id}}
\newcommand{\rd}{\mathrm{d}}
\newcommand{\std}{\,\rd}
\newcommand{\tr}{\mathop{\mathrm{tr}}}

\newcommand{\C}{\mathbb{C}}
\newcommand{\Q}{\mathbb{Q}}
\newcommand{\R}{\mathbb{R}}
\newcommand{\Z}{\mathbb{Z}}
\newcommand{\N}{\mathbb{N}}

\newcommand{\tup}[1]{\textup{#1}}

\newcommand{\cB}{\mathcal{B}}
\newcommand{\cF}{\mathcal{F}}
\newcommand{\cL}{\mathcal{L}}
\newcommand{\cM}{\mathcal{M}}
\newcommand{\cN}{\mathcal{N}}
\newcommand{\cR}{\mathcal{R}}
\newcommand{\cS}{\mathcal{S}}

\newcommand{\arcosh}{\mathop{\mathrm{arcosh}}\nolimits}
\newcommand{\cosech}{\mathop{\mathrm{cosech}}\nolimits}
\newcommand{\sech}{\mathop{\mathrm{sech}}\nolimits}

\begin{document}

\title[Lobachevsky distance preservers]%
{Distance preservers for Lobachevsky space}

\author{Alexander Belton}
\address[A.~Belton]{School of Engineering, Computing and Mathematics,
University of Plymouth, Plymouth, UK}
\email{\tt alexander.belton@plymouth.ac.uk}

\author{Dominique Guillot}
\address[D.~Guillot]{University of Delaware, Newark, DE, USA}
\email{\tt dguillot@udel.edu}

\author{Apoorva Khare}
\address[A.~Khare]{Indian Institute of Science;
Analysis and Probability Research Group; Bangalore, India}
\email{\tt khare@iisc.ac.in}

\author{Mihai Putinar}
\address[M.~Putinar]{University of California at Santa Barbara, CA, USA} 
\email{\tt mputinar@math.ucsb.edu}

\date{23rd August 2026}

\begin{abstract}
We obtain a complete description of the class of entrywise preservers of Lorentz--Gram matrices.
This resolves, for the case of constant negative curvature, the classification of entrywise preservers obtained
by Schoenberg in the zero-curvature (Euclidean) and constant-positive-curvature (spherical) settings.
\end{abstract}

\maketitle

\tableofcontents

\section{Introduction}

The classification of functions that, when applied entrywise, preserve a distinguished class
of structured matrices has a long history, with contributions from Schur
towards the start of the twentieth century,
then proceeding via P\'olya and Szeg\H{o} to the seminal work of Schoenberg,
and more recently to entrywise preservers of moment matrices and of totally positive kernels.
Fundamentally, these results all exploit properties of positive definiteness and the many tools
available in this setting. While these classifications encompass Euclidean and spherical geometry
(zero and constant positive curvature), the corresponding results for hyperbolic geometry
(constant negative curvature) had been lacking until now.

Here, we establish a complete classification for preservers of Gram-type matrices with Lorentzian signature.
Matrices with entries in $[ 1, \infty )$ that have leading diagonal
with all entries equal to $1$ and exactly one positive eigenvalue, counted with
multiplicity, are precisely the Gram matrices for infinite-dimensional
Lobachevsky space $\cL$; these matrices were named \emph{Lorentz--Gram}
by Loewner \cite{Loew65}.

We show here that a function $f : [ 1, \infty ) \to [ 1, \infty )$ sends the collection
of Lorentz--Gram matrices to itself when applied entrywise if and only if
$f$ has the representation
\begin{equation}\label{rep}
f( t ) = 1 + \gamma 1_{( 1, \infty )}( t ) + \beta \frac{t^\alpha - 1}{\alpha} + %
\int_{( 0, \infty )} ( 1 - t^{-s} ) \std\nu( s ) \qquad ( t \ge 1 ),
\end{equation}
with the convention
\[
\frac{t^\alpha - 1}{\alpha} := \log t \qquad \text{if } \alpha = 0,
\]
where the \emph{type} $\alpha \in [ 0, 1 ]$, the \emph{drift} $\beta \in \R_+$,
the \emph{jump} $\gamma \in \R_+$
and the \emph{L\'evy measure} $\nu$ is a positive Radon measure such that
\begin{equation}\label{keyineq}
\beta \ge \alpha ( 1 + \gamma + \nu\bigl( ( 0, \infty ) \bigr)  ) %
\qquad \text{if } \alpha \in ( 0, 1 ],
\end{equation}
so that $\nu$ is necessarily finite in this case, and
\[
\int_{( 0, \infty )} \min\{ s, 1 \} \std\nu ( s ) < \infty \qquad \text{if } \alpha = 0.
\]
This is the integrability condition for the L\'evy--Khintchine representation
of Bernstein functions,  which is no coincidence: we may re-write (\ref{rep}) as
\[
f( t ) = 1 + \gamma 1_{( 1, \infty )}( t ) + B( \log t ) + %
B'( \infty ) \Bigl( \frac{t^\alpha - 1}{\alpha} - \log t \Bigr) \qquad ( t \ge 1 ),
\]
where $B$ is a Bernstein function that vanishes at the origin:
\[
B( x ) = \beta x + \int_{( 0, \infty )} ( 1 - e^{-x s} ) \std\nu( s )  %
\qquad ( x \ge 0 ).
\]
The validity of the inequality (\ref{keyineq}) is equivalent to
$t \mapsto t^{-\alpha} f( t )$ being non-decreasing on $( 1, \infty )$.
In terms of the Bernstein-function representation, this requirement becomes
\[
B'( x ) \geq \alpha ( 1 + \gamma + B( x ) - B'( \infty ) x ) \qquad ( x > 0 ).
\]

Our proofs exploit the classification of screw lines in Lobachevsky space obtained by Krein \cite{Krein},
with the elliptic, parabolic and hyperbolic trichotomy corresponding to bounded preservers
of zero type, unbounded preservers of zero type and preservers of positive type, respectively.

Every function $f$ of this form, other than the constant function $f \equiv 1$,
is a \emph{distance preserver} in the following sense:
if $d$ is the hyperbolic metric on $\cL$ then $\varphi \comp d$ is also a metric on $\cL$, where
$\varphi := \arcosh \comp f \comp \cosh$. The naturalness of this relationship is explained below;
see Theorem~\ref{preservers}.

Bavaud has highlighted the relevance of entrywise preservers of Euclidean squared-distance matrices
to machine learning and data analysis  \cite{Bavaud}. Moreover, the groundbreaking article
by Krioukov and collaborators \cite{Krioukov} opened up a vast range of applications
of hyperbolic distance geometry; here we mention just a few \cite{Ganea, Nickel-Kiela, Sala}.
From this applied perspective, our study of inner transforms of hyperbolic distance is natural and timely.

In particular, Lorentz--Gram matrices are also the Gram matrices associated 
with kernels of real hyperbolic type \cite{MonodPy}. Thus our classification is also
the classification of functions which operate on kernels of real hyperbolic type.
In particular, the power-preservation result
obtained by Monod and Py \cite[Theorem~3.10]{MonodPy}, that if~$\alpha \in [ 0, 1 ]$
then $K^\alpha$ is a kernel of real hyperbolic type whenever $K$ is,
appears as a special case of the classification obtained here.

We also connect the results obtained below to the theories of
complete Nevanlinna--Pick kernels (Section~\ref{psdkernels})
and Bochner subordination (Section~\ref{sec:probab}). Furthermore, we resolve a question of
Cohen and Lifshits (Proposition~\ref{prp:candl}) on fractional Ornstein--Uhlenbeck fields
on hyperbolic space.

To ensure our presentation is self contained and accessible to the broad audience we hope it to attract,
we provide a detailed introduction to Lobachevsky space in Section~\ref{sec:prelim}
and full proofs of auxiliary results in Appendix~\ref{appendix}.

\subsection{Notation}

We use standard mathematical notation, so that $\C$, $\R$ and $\Q$ denote the fields of complex, real and rational numbers. We let $\N$ denote the set of natural numbers, starting at $1$, whereas $\R_+$
and $\Z_+$ denote the sets of
non-negative real numbers and integers, respectively, with each including $0$.
A vector $\bv \in \R^n$ is a column vector by default, with 
the corresponding row vector being~$\bv^T$; the vector $\bone_n$ has $n$ entries, each equal to $1$.
The $n \times n$ identity matrix for the usual matrix multiplication is denoted~$\Id_{n \times n}$.

\subsection{Acknowledgements}
The authors thank the Mathematisches Forschungsinstitut Oberwolfach
for support and hospitality via the
\emph{Equivariant-kernel transforms and Schur functions} project
within its Research Fellows programme, where work on this paper was initiated.
AB thanks the Isaac Newton Institute for Mathematical Sciences,
Cambridge, for support and hospitality via its Retreats programme,
where further work on this paper was undertaken, and so the support of EPSRC grant EP/Z000580/1.
DG was partially supported by NSF grant \#2350067.
AK acknowledges support from the Government of India via an ARG grant (ANRF/ARG/2025/011665/MS) from ANRF, a Shanti Swarup Bhatnagar Award from CSIR and the DST FIST program-2021 TPN-700661.
Mihai Putinar was partially supported by a Simons Foundation collaboration grant.
Claude Opus 4.8, Opus 5 and Fable 5 were used to support the development
of some of the working in Sections 3, 4, 5, 6 and the appendix.
The authors have verified the content of all AI-assisted material,
including all references and mathematical claims,
and take full responsibility for the content of this paper.

\section{Preliminaries}\label{sec:prelim}

Let $\ell^2 = \ell^2( \Z_+ )$ denote the real Hilbert space of square-summable sequences indexed by the non-negative integers,
with the standard inner product
\[
\langle x, y \rangle := \sum_{i = 0}^\infty x_i y_i %
\qquad \textrm{for } x = ( x_0, x_1, \ldots ) \textrm{ and } y = ( y_0, y_1, \ldots ).
\]
We introduce a symmetric bilinear form, the \emph{Lorentz form}:
\[
\ell^2 \times \ell^2 \to \R; \ %
( x, y ) \mapsto [ x, y ] := x_0 y_0 - \sum_{j = 1}^\infty x_j y_j = %
x_0 y_0 - \langle x', y' \rangle,
\]
where we write $x = ( x_0, x' ) \in \R \oplus \ell^2( \N )$;
we will use this decomposition without further comment
and will also consider $\ell^2( \N )$ as the subspace $\{ 0 \} \oplus \ell^2( \N )$
of $\ell^2$.
The Cauchy--Schwarz inequality gives that
\[
[ x, y ]^2 \leq \langle x, x \rangle \, \langle y, y \rangle \qquad \textrm{ for any } x, y \in \ell^2.
\]
In particular, the Lorentz form is jointly continuous
when $\ell^2$ is equipped with the norm topology.

We have also the inequality
\begin{equation}\label{ineq:qf}
[ x, y ]^2 \geq [ x, x ] \, [ y, y ] %
\qquad \textrm{for any } x, y \in \{ z \in \ell^2 : [ z, z ] > 0 \};
\end{equation}
to see this, note that $x_0 \neq 0$ and the quadratic polynomial
$p( t ) := [ t x + y, t x + y ]$ has a real root, since
\[
p( 0 ) = [ y, y ] > 0 \qquad \textrm{ and } p( -y_0 / x_0 ) = -\langle t x' + y' , t x' + y' \rangle \leq 0.
\]
We now specialise to the \emph{Lobachevsky space}
\[
\cL := \{ x \in \ell^2 : x_0 > 0 \textrm{ and } [ x, x ] = 1 \}.
\]
It is immediate that if $x \in \cL$ then $x_0 \geq 1$, with equality exactly when $x = e := ( 1, 0 )$.

In fact, any $x \in \cL \setminus \{ e \}$ may be written uniquely in the form
\[
x = e \cosh \alpha + \omega \sinh \alpha,
\]
where $\alpha \in ( 0, \infty )$ and $\omega \in \ell^2( \N )$ is a unit vector.
Conversely, the vector
\[
x = e \cosh \alpha + \omega \sinh \alpha \in \cL
\]
for any $\alpha \in \R_+$ and any unit vector $\omega \in \ell^2( \N )$.
For the proof of both of these facts, see Proposition~\ref{decomposition}.

In particular, there is a path
\[
[ 0, 1 ] \to \cL; \ t \mapsto e \cosh( t \alpha ) + \omega \sinh( t \alpha )
\]
from $e$ to any $x \in \cL$, which is therefore path connected, so connected,
when $\cL$ has the subspace topology given by the Hilbert-space norm.
It now follows from (\ref{ineq:qf}) that
\[
[ x, y ] \geq 1 \qquad \text{for any } x, y \in \cL,
\]
since $[ x, x ] = 1$ and $z \mapsto [ x, z ]$ is continuous on $\cL$.

\subsection{The hyperbolic metric}

The \emph{hyperbolic distance} $r$ between points $x$ and $y$ in $\cL$
is defined by the identity
\[
[ x, y ] = \cosh r.
\]
Symmetry of the resulting metric is immediate, as is the fact that a point in $\cL$
is at zero distance from itself.
Now suppose $x$, $y \in \cL$ are such that
\[
1 = [ x, y ] = x_0 y_0 - \langle x', y' \rangle
\]
and let $\hx := ( 1, x' ) \in \ell^2$ and similarly for $\hy$. Then
\[
1 = [ x, x ] = x_0^2 - \langle x', x' \rangle \iff x_0^2 = %
1 + \langle x', x' \rangle = \langle \hx, \hx \rangle
\]
and similarly for $y_0$, so
\[
\langle \hx, \hx \rangle \, \langle \hy, \hy \rangle = %
x_0^2 y_0^2 = \langle \hx, \hy \rangle^2.
\]
Equality holds in the Cauchy--Schwarz inequality,
so $\hx$ and $\hy$ are linearly dependent,
and since $\hx_0 = \hy_0 = 1$, we have that $\hx = \hy$ and hence $x' = y'$.
Finally, we see that
\[
x_0^2 = \langle \hx, \hx \rangle = \langle \hy, \hy \rangle = y_0^2,
\] 
so $x_0 = y_0$ and therefore $x = y$. This shows that the metric is faithful.

To establish the triangle inequality, we first show the existence of a useful
collection of isometries.
Given any $x \in \cL$, we have that
\[
[ x - e, x - e ] = [ x, x ] - 2 [ x, e ] + [ e, e ] = 1 - 2 x_0 + 1 = 2 ( 1 - x_0 ).
\]
Thus, if $x \neq e$, so that $x_0 > 1$, the vector	
\[
u := \frac{x - e}{\sqrt{2 ( x_0 - 1 )}} \in \ell^2 
\]
is such that $[ u, u ] = -1$. Hence the linear map
\[
R_x : \ell^2 \to \ell^2; \ y \mapsto y + 2 [ u, y ] u
\]
is such that
\[
R_x u = -u, \qquad R_x e = x, \qquad R_x x = e \quad \textrm{and} \quad %
R_x( R_x y ) = y \quad \textrm{for any } y \in \ell^2.
\]
Furthermore, if $y$, $z \in \ell^2$ then
\begin{align*}
[ R_x y, R_x z ] & = [ y + 2 [ u, y ] u, z + 2 [ u, z ] u ] \\[1ex]
 & = [ y, z ] + 4 [ u, y ] [ u, z] ( 1 + [ u, u ] ) \\[1ex]
 & = [ y, z ],
\end{align*}
so $R_x$ is a bijection from $\cL$ to itself that exchanges $e$ and $x$ and preserves the hyperbolic distance.

Now let $x$, $y$, $z \in \cL$ and let $r$, $s$, $t \in \R_+$ be such that
\[
\cosh r = [ x, y ], \qquad \cosh s = [ y, z ] \quad \textrm{and} \quad \cosh t = [ z, x ].
\]
Let $R = R_y$ if $y \neq e$ and let $R$ be the identity map otherwise.
Then the triangle inequality
\begin{equation}\label{triangle}
r + s \geq t
\end{equation}
holds if and only if the same inequality holds with $( x, y, z )$ replaced by
$( R x, R y = e, R z )$, and so, without loss of generality,
we may assume that $y =e$.
Applying $\cosh$ to both sides of (\ref{triangle})
and using the addition formula, we must show that
\begin{align*}
\cosh r \cosh s + \sinh r \sinh s \geq \cosh t & \iff %
[ x, e ] \, [ e, z ] + \sinh r \sinh s \geq [ z, x ] \\[1ex]
 & \iff x_0 z_0 + \sinh r \sinh s \geq x_0 z_0 - \langle x', z' \rangle.
\end{align*}
Furthermore, we have that $\sinh u = \sqrt{\cosh^2 u - 1}$ for any $u \in \R_+$,
so
\[
\sinh r \sinh s = \sqrt{[ x, e ]^2 - 1} \sqrt{[ e, z ]^2 - 1} = %
\sqrt{( x_0^2 - 1 ) ( z_0^2 - 1 )}.
\]
Noting that $1 = [ x, x ] = x_0^2 - \langle x', x' \rangle$ and similarly for $z$,
we need to show that
\[
\sqrt{\langle x', x' \rangle \, \langle z', z' \rangle} \geq -\langle x', z' \rangle,
\]
but this is an immediate consequence of the Cauchy--Schwarz inequality.
Hence the triangle inequality holds for the hyperbolic distance and
\[
d : \cL \times \cL \to \R_+; \ ( x, y ) \mapsto \arcosh [ x, y ]
\]
is a metric.

\subsection{Topology}

The fact that the norm topology on $\cL$
is the same as the topology generated by the hyperbolic metric
follows from the readily verified identity
\begin{equation}\label{key-id}
\langle x - y, x - y \rangle = 2 \bigl( ( x_0 - y_0 )^2 + [ x, y ] - 1 \bigr) \qquad %
\textrm{for any } x, y \in \cL.
\end{equation}
If $x( n ) \to x$ in norm then $x( n )_0 \to x_0$ and,
by (\ref{key-id}), we see that $[ x( n ), x ] \to 1$,
hence $x( n ) \to x$ in the hyperbolic topology.
Conversely, if $x( n ) \to x$ in the hyperbolic topology
then $[ x( n ), x ] \to 1$. Furthermore, we have that
\[
| \arcosh x( n )_0 - \arcosh x_0 | = | d( x( n ), e ) - d( x, e ) | \leq d( x( n ), x ) \to 0
\] 
and therefore $x( n ) \to x$ in the norm topology, again by (\ref{key-id}).

The identity (\ref{key-id}) may also be used to show that $\cL$ is complete
for both the $\ell^2$ norm and the hyperbolic metric. In fact, as we can write
\[
\cL = \{ x \in \ell^2 : x_0 \geq 1 \textrm{ and } [ x, x ] = 1 \},
\]
it's clear that $\cL$ is a norm-closed subset of $\ell^2$, so $\cL$ is complete
for the $\ell^2$ norm.
If the sequence $\bigl( x( n ) \bigr)_{n = 1}^\infty$ is Cauchy for the metric $d$
then so is $\bigl( x( n )_0 = \cosh d( x( n ), e ) \bigr)_{n = 1}^\infty$,
using the reverse triangle inequality as above,
and therefore $\bigl( x( n ) \bigr)_{n = 1}^\infty$ is Cauchy for the norm,
by (\ref{key-id}).
Thus there exists $x \in \cL$ with $x( n ) \to x$ in the norm topology.
In particular, we have that $x( n )_0 \to x_0$ and
therefore $[ x( n ), x ] \to 1$ by another application of~(\ref{key-id}).
This shows that $\cL$ is also complete for the hyperbolic metric.

\subsection{Lorentz--Gram matrices}

We are interested in the equivalent of Gram matrices for the bilinear form
$[ \cdot, \cdot ]$; Loewner named these \emph{Lorentz--Gram matrices}
in \cite{Loew65}. These will be characterised shortly: see Theorem~\ref{Krein-Gram}.

Let
\begin{align*}
\cM & := \bigcup_{n = 1}^\infty %
\{ M = [ m_{i j} ] \in [ 1, \infty )^{n \times n} : %
M \textrm{ is symmetric and } m_{i i} = 1 %
\textrm{ for any } i \} \\[1ex]
\textrm{and } \cM_{1+} & := %
\{ M \in \cM : M \textrm{ has exactly one positive eigenvalue} \}.
 \end{align*}
 
Here and throughout, we count eigenvalues by multiplicity,
so \smash[b]{$\begin{bmatrix} 1 & 0 \\[1ex] 0 & 1 \end{bmatrix}$} does not have
exactly one positive eigenvalue.

\begin{lemma}\label{Loewner}
If $M$ and $N$ are real symmetric matrices and their difference $M - N$
is positive semidefinite then $M$ has at least as many positive eigenvalues as~$N$.
\end{lemma}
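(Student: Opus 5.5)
This follows from the Courant--Fischer min--max characterisation of eigenvalues. Let $M$ and $N$ be $n \times n$ real symmetric matrices with $M - N$ positive semidefinite, and let $k$ be the number of positive eigenvalues of $N$, counted with multiplicity. If $k = 0$ there is nothing to prove, so assume $k \geq 1$.

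The first step is to build a test subspace. Choose an orthonormal set of eigenvectors $\bv_1, \ldots, \bv_k$ of $N$ corresponding to its positive eigenvalues $\lambda_1, \ldots, \lambda_k > 0$, and let $V$ be their span. Then $\dim V = k$. Moreover, for any non-zero $\bx = \sum_i c_i \bv_i \in V$ we have
\[
\bx^T N \bx = \sum_{i = 1}^k \lambda_i c_i^2 > 0,
\]
so $N$ is positive definite on $V$.

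The second step transfers this to $M$. For non-zero $\bx \in V$,
\[
\bx^T M \bx = \bx^T N \bx + \bx^T ( M - N ) \bx \geq \bx^T N \bx > 0,
\]
so the quadratic form of $M$ is positive definite on the $k$-dimensional subspace $V$.

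The third step counts positive eigenvalues of $M$. Let $p$ be the number of positive eigenvalues of $M$, and let $W$ be the span of the eigenvectors of $M$ for its non-positive eigenvalues. Then $\dim W = n - p$ and $\bx^T M \bx \leq 0$ for every $\bx \in W$. If $p < k$, then
\[
\dim V + \dim W = k + n - p > n,
\]
so $V \cap W$ contains a non-zero vector $\bx$. Such an $\bx$ would satisfy both $\bx^T M \bx > 0$ and $\bx^T M \bx \leq 0$, which is a contradiction. Hence $p \geq k$.

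There is no genuine obstacle here. The only point needing care is that eigenvalues are counted with multiplicity, and this is handled by using the dimensions of the eigenspaces $V$ and $W$ rather than counting distinct eigenvalues.
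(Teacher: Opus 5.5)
Your proof is correct and is essentially the paper's approach: the paper cites Weyl's monotonicity theorem from Horn--Johnson (if $M - N$ is positive semidefinite then $\lambda_i(M) \ge \lambda_i(N)$ for each $i$, eigenvalues in decreasing order) and reads off the count of positive eigenvalues. Your subspace dimension count is the standard Courant--Fischer argument for exactly this special case, made self-contained.
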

\begin{proof}
This is a consequence of Weyl's interlacing theorem: see \cite[Corollary~7.7.4(d)]{HJ}.
\end{proof}
 
\begin{theorem}\label{Krein-Gram}
The following are equivalent for a matrix $M = [ m_{i j} ]_{i, j = 1}^n \in \cM$.
\begin{itemize}
\item[(1)] The matrix $M \in \cM_{1+}$.
\item[(2)] There exists $\bv \in [ 1, \infty )^n$ and a positive semidefinite matrix $A \in \R^{n \times n}$ such that $M = \bv \bv^T - A$
and $A \bv = \bzero$.
\item[(3)] There exists $\bv \in [ 1, \infty )^n$ and a positive semidefinite matrix $A \in \R^{n \times n}$ such that $M = \bv \bv^T - A$.
\item[(4)] There exists $\bv \in \R^n$ and a positive semidefinite matrix $A \in \R^{n \times n}$ such that $M = \bv \bv^T - A$.
\item[(5)] There exist $x( 1 )$, \ldots, $x( n ) \in \cL$ such that $m_{i j} = [ x( i ), x( j ) ]$
for any $i$ and $j$.
\end{itemize}
\end{theorem}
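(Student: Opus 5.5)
I will prove the cycle (1) $\Rightarrow$ (2) $\Rightarrow$ (3) $\Rightarrow$ (4) $\Rightarrow$ (1), and separately (2) $\Leftrightarrow$ (5). The implications (2) $\Rightarrow$ (3) $\Rightarrow$ (4) are trivial.

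\textbf{(4) $\Rightarrow$ (1).} Since $\bv\bv^T - M = A$ is positive semidefinite, Lemma~\ref{Loewner} shows that $\bv\bv^T$ has at least as many positive eigenvalues as $M$. As $\bv\bv^T$ has rank at most one, $M$ has at most one positive eigenvalue. On the other hand, $\tr M = n > 0$, so $M$ has at least one positive eigenvalue. Hence $M \in \cM_{1+}$.

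\textbf{(1) $\Rightarrow$ (2).} Let $\lambda > 0$ be the unique positive eigenvalue of $M$, with unit eigenvector $\bu$. By the spectral theorem, $M = \lambda \bu\bu^T - A$, where $A$ is positive semidefinite and $A\bu = \bzero$. Put $\bv := \sqrt{\lambda}\,\bu$; then $A\bv = \bzero$, and it remains to arrange $\bv \in [1,\infty)^n$.
\begin{itemize}
\item \emph{Sign.} The entries of $M$ are positive, so Perron--Frobenius gives that the eigenvector for the largest eigenvalue $\lambda$ may be chosen entrywise positive; replace $\bu$ by $-\bu$ if necessary.
\item \emph{Size.} Each diagonal entry satisfies $1 = m_{ii} = v_i^2 - a_{ii}$, and $a_{ii} \ge 0$, so $v_i^2 \ge 1$. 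With $v_i > 0$ this gives $v_i \ge 1$.
\end{itemize}

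\textbf{(2) $\Rightarrow$ (5).} Factor $A = B^T B$ with $B$ a real $n \times n$ matrix, and let $\mathbf{b}_i \in \R^n$ denote the columns of $B$. Embed $\R^n$ into $\ell^2(\N)$ and set $x(i) := (v_i, \mathbf{b}_i)$. Then
\[
[x(i), x(j)] = v_i v_j - \langle \mathbf{b}_i, \mathbf{b}_j \rangle = m_{ij}.
\]
In particular $[x(i), x(i)] = 1$ and $x(i)_0 = v_i \ge 1 > 0$, so each $x(i) \in \cL$. The condition $A\bv = \bzero$ is not needed here.

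\textbf{(5) $\Rightarrow$ (3).} Take $\bv := (x(i)_0)_i$, which lies in $[1,\infty)^n$ because $x_0 \ge 1$ on $\cL$. Take $A := [\langle x(i)', x(j)' \rangle]$, a Gram matrix and so positive semidefinite. Then $M = \bv\bv^T - A$ by the definition of the Lorentz form. Combined with the cycle above, this closes all the equivalences.

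\textbf{Main obstacle.} The only delicate point is the sign choice in (1) $\Rightarrow$ (2). If one wishes to avoid Perron--Frobenius, there is a direct argument. Since $m_{ij} = v_i v_j - a_{ij}$ and $a_{ij} \le \sqrt{a_{ii} a_{jj}}$, we get $m_{ij} \le v_i v_j + \sqrt{(v_i^2 - 1)(v_j^2 - 1)}$. If $v_i$ and $v_j$ had opposite signs, this would force $m_{ij} \le -|v_i||v_j| + \sqrt{(v_i^2 - 1)(v_j^2 - 1)} < 0$. That contradicts $m_{ij} \ge 1$, so all entries of $\bv$ share a common sign.
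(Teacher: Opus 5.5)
Your proof is correct and follows essentially the same route as the paper: Lemma~\ref{Loewner} plus the trace for (4)$\Rightarrow$(1), Perron--Frobenius with the spectral decomposition for (1)$\Rightarrow$(2), a factorisation $A = B^T B$ to embed into $\cL$, and the Gram-matrix splitting for (5)$\Rightarrow$(3). In your optional sign argument, the inequality you need is $-a_{ij} \le \sqrt{a_{ii} a_{jj}}$, which follows from $|a_{ij}| \le \sqrt{a_{ii} a_{jj}}$ for positive semidefinite $A$; the bound $a_{ij} \le \sqrt{a_{ii} a_{jj}}$ that you wrote does not give it, and with this correction that argument also works.
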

\begin{proof}
We note first that $\tr M = n > 0$, so $M$ always has at least one positive eigenvalue.

If (4) holds then, by Lemma~\ref{Loewner},
the matrix~$M$ has no more than one positive eigenvalue and so (1) holds.

If (1) holds then, by the Perron--Frobenius theorem \cite[Theorem~8.2.8]{HJ}
and spectral decomposition for real symmetric matrices, we can write
$M = \bv \bv^T - A$, where the vector $\bv  = ( v_1, \ldots, v_n )^T$ has
positive entries, the matrix $A = [ a_{i j} ]_{i, j = 1}^n$ is positive semidefinite
and $A \bv = \bzero$.
Furthermore, it holds that $v_i^2 = m_{i i} + a_{i i} \geq 1$
and so $v_i \geq 1$. Hence (1) implies (2).

It is clear that (2) implies (3) and (3) implies (4), so the first four statements are equivalent.

Given (3), we can write $A = B^T B$ for some matrix $B$ with columns
$\bb_1$, \ldots, $\bb_n \in \R^n$. Setting
$x( i ) := ( v_i, \bb_i^T, 0, 0, \ldots ) \in \ell^2( \Z_+ )$, we
see that $[ x( i ), x( j ) ] = v_i v_j - \bb_i^T \bb_j = m_{i j}$ for any $i$ and $j$,
so $x(1 )$, \ldots, $x( n ) \in \cL$ and (3) implies (5).

Finally, if (5) holds then
\[
[ x( i ), x( j ) ] = x( i )_0 x( j )_0 - \langle x( i )', x( j )' \rangle = %
( \bu \bu^T - G )_{i j} \qquad \text{for any } i \textrm{ and } j,
\]
where $\bu := ( x( 1 )_0, \ldots, x( n )_0 )^T \in [ 1, \infty )^n$
and $G$ is a Gram matrix, so positive semidefinite. Thus (5) implies (3).
\end{proof}

We note that $\cM_{1+}$ is not closed under the Schur product: if
\[
M_{a, b} := %
\begin{bmatrix} 1 & a & b \\[1ex] a & 1 & a \\[1ex] b & a & 1 \end{bmatrix} %
\qquad ( a, b \ge 1 )
\]
then, by Proposition~\ref{examplematrix}, the matrix
$M_{2, 6} \in \cM_{1+}$ but $M_{2, 6}^{\circ 2} = M_{4, 36} \notin \cM_{1+}$, because
\[
2 ( 2 )^2 - 1 = 7 \ge 6 \qquad \text{and} \qquad 2 ( 4 )^2 - 1 = 31 < 36.
\]
The set $\cM_{1+}$ is also not convex: we have that $M_{1, 1} \in \cM_{1+}$ and $M_{2, 7} \in \cM_{1+}$
but $\cM_{3/2, 4} \not\in \cM_{1+}$, because
\[
2 ( 1 )^2 - 1 = 1 \ge 1, \qquad 2 ( 2 )^2 - 1 = 7 \ge 7 \quad \text{and} \quad %
2 ( 3 / 2 )^2 - 1 = 7 / 2 < 4.
\]

We note that $\cM_{1+}$ is closed under taking principal submatrices.
If $M \in \cM_{1+}$ has an $m \times m$ principal submatrix $A$,
with eigenvalues
\[
\lambda_1 \le \cdots \le \lambda_{n - 1} \le 0 < \lambda_n %
\qquad \text{and} \qquad %
\mu_1 \le \cdots \le \mu_m,
\]
respectively, then, by the Cauchy interlacing theorem \cite[Theorem~4.3.28]{HJ},
\[
\mu_1 \le \cdots \le \mu_{m - 1} \le \lambda_{n - m + m - 1} = \lambda_{n - 1} \le 0
\]
and $\mu_m > 0$ because $A$ has positive trace.

We also introduce relaxations of $\cM$ and $\cM_{1+}$, where
the leading diagonal is allowed to be non-constant:
\begin{align*}
\cM^* & := %
\bigcup_{n = 1}^\infty \{ M = [ m_{i j} ]_{i, j = 1}^n \in [ 1, \infty )^{n \times n} : %
M \text{ is symmetric} \} \\[1ex]
\text{and} \quad \cM^*_{1+} & := %
\{ M \in \cM^* : M \text{ has exactly one positive eigenvalue} \}.
\end{align*}

A real symmetric matrix with trace at least one
has exactly one positive eigenvalue if and only if
its second largest eigenvalue is non-positive. It follows that the sets
$\cM_{1+}$ and $\cM_{1+}^*$ are closed under pointwise limits.

We note also that $\cM_{1+}^*$ is closed under scaling: if $M \in \cM_{1+}^*$
then $\lambda M \in \cM_{1+}^*$ for any $\lambda \ge 1$.

\section{Distance preservers}

Our initial aim is to classify the collection of functions of the form
$\varphi : \R_+ \to \R_+$ such that $\varphi \comp d$ is a metric on $\cL$.
In particular, we must have that $\varphi( x ) = 0$ if and only if $x = 0$.

We begin our approach via the following result,
which was announced by Krein in~1948 \cite{Krein} and
appears as Theorem~6.1 in \cite{IK-2}. 

\begin{theorem}[Krein, 1948]\label{Krein-embedding}
Let $( Q, \rho )$ be a set $Q$ equipped with a map
$\rho : Q \times Q \to \R_+$ such that
\[
\rho( p, q ) = \rho( q, p ) \qquad \textrm{and} \qquad \rho( p, p ) = 0 %
\qquad \textrm{for any } p, q \in Q.
\]
There exists a map $\Phi : Q \to \cL$ such that
\[
d\bigl( \Phi( p ), \Phi( q ) \bigr) = \rho( p, q ) %
\qquad \textrm{for any } p, q \in Q
\]
if and only if the real symmetric matrix
\[
\bigl[ \cosh \rho( q_i, q_j ) \bigr]_{i, j = 1}^n
\]
has exactly one positive eigenvalue for any $q_1$, \ldots, $q_n \in Q$.
\end{theorem}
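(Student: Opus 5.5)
Necessity is immediate from Theorem~\ref{Krein-Gram}. If $\Phi$ exists, then for any $q_1$, \ldots, $q_n \in Q$ we have $\cosh \rho( q_i, q_j ) = [ \Phi( q_i ), \Phi( q_j ) ]$. So the matrix $\bigl[ \cosh \rho( q_i, q_j ) \bigr]$ is the Lorentz--Gram matrix of the points $\Phi( q_1 )$, \ldots, $\Phi( q_n )$. Its entries lie in $[ 1, \infty )$ and its diagonal is $\cosh 0 = 1$, so it lies in $\cM$. The implication (5)$\Rightarrow$(1) of Theorem~\ref{Krein-Gram} then gives exactly one positive eigenvalue.

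For sufficiency, I would first handle finite subsets and then pass to the whole of $Q$ via a reproducing-kernel construction. Fix a base point $p_0 \in Q$ and write $c( p, q ) := \cosh \rho( p, q )$. For a finite set $F \ni p_0$, the matrix $[ c( p, q ) ]_{p, q \in F}$ lies in $\cM_{1+}$. Theorem~\ref{Krein-Gram}(5) therefore gives points $x( p ) \in \cL$ realising it. Composing with the isometry $R_{x( p_0 )}$ (or the identity), we may assume $x( p_0 ) = e$. Then $x( p )_0 = [ x( p ), e ] = c( p, p_0 )$ is determined intrinsically, and
\[
\langle x( p )', x( q )' \rangle = c( p, p_0 ) \, c( q, p_0 ) - c( p, q ) =: k( p, q ).
\]
Because every finite $F$ can be realised this way, the kernel $k$ on $Q \times Q$ is positive semidefinite: each finite matrix $[ k( p, q ) ]$ is a Gram matrix. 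This is the key step, and it is independent of the choice of $F$ since $k$ is defined intrinsically.

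Next, I would apply the Kolmogorov/Moore--Aronszajn construction to $k$. This yields a real Hilbert space $H$ and a map $p \mapsto \xi( p ) \in H$ with $\langle \xi( p ), \xi( q ) \rangle = k( p, q )$. Setting $\Phi( p ) := ( c( p, p_0 ), \xi( p ) )$ gives
\[
[ \Phi( p ), \Phi( q ) ] = c( p, q ), \qquad [ \Phi( p ), \Phi( p ) ] = 1, \qquad \Phi( p )_0 \ge 1,
\]
so $d( \Phi( p ), \Phi( q ) ) = \rho( p, q )$, as required.

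The main obstacle is that $H$ must embed isometrically into $\ell^2( \N )$, which requires $H$ to be separable. When $Q$ is countable, or more generally when $H$ has dimension at most $\aleph_0$, we can simply identify $H$ with a subspace of $\ell^2( \N )$. For uncountable $Q$, the paper's $\cL$ is modelled on $\ell^2( \Z_+ )$, so I expect the statement to be read either for $Q$ with separable associated space or with $\ell^2$ of suitable dimension. I would note this point and otherwise follow \cite{IK-2}. Apart from this, the only delicate point is checking that $k$ is positive semidefinite, and that follows directly from the finite realisations above.
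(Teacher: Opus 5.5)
Your proof is correct, and it takes a genuinely different route from the paper. The paper gives no argument of its own and simply cites \cite{Krein} and \cite[Theorem~6.1]{IK-2}. You reduce the statement to the finite-dimensional Theorem~\ref{Krein-Gram}, which the paper does prove. The idea that lets the finite realisations glue together is pinning a base point $p_0$ at $e$. This makes the time coordinate $c(p,p_0)$, and hence the spatial kernel $k(p,q) = c(p,p_0)c(q,p_0) - c(p,q)$, independent of the realisation. Positive semidefiniteness of $k$ can then be checked on finite sets, and the Kolmogorov construction finishes the proof.

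You can even bypass the realisations and the reflection $R_{x(p_0)}$. Since $[c(p,q)]_{p,q \in F}$ has exactly one positive eigenvalue and its diagonal entry at $p_0$ is $1$, the Haynsworth inertia formula (used in exactly this way in the paper's continuous zero-order case) shows that the Schur complement $[c(p,q) - c(p,p_0)c(q,p_0)]_{p,q \in F \setminus \{p_0\}}$ has no positive eigenvalue. That is precisely $k \geq 0$ on $F$. What your approach buys over the citation is a self-contained proof, and it brings to light a hypothesis that the paper's formulation suppresses.

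That hypothesis is not an artefact of your method. With $\cL$ built on the separable space $\ell^2(\Z_+)$, the statement as written is false for suitable uncountable $Q$, so you should say this outright rather than defer to the literature. Take $Q$ uncountable and $\rho(p,q) = r > 0$ whenever $p \neq q$. Every matrix $[\cosh\rho(q_i,q_j)]$ equals $\cosh r\,\bone\bone^T - (\cosh r - 1)G$, where $G = [1_{\{q_i = q_j\}}]$ is positive semidefinite, so it lies in $\cM_{1+}$ by Theorem~\ref{Krein-Gram}. But $\cL$ is separable for the hyperbolic metric, since that metric induces the norm topology, so it cannot contain uncountably many points at mutual distance $r$.

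The sharp form is that, given the eigenvalue condition, $\Phi$ exists if and only if the pseudometric space $(Q,\rho)$ is separable. Necessity is clear. For sufficiency, your construction gives $\|\xi(p) - \xi(q)\|^2 = (c(p,p_0) - c(q,p_0))^2 + 2(\cosh\rho(p,q) - 1)$. Since $\rho$ satisfies the triangle inequality by the three-point case, $\xi$ is $\rho$-continuous and the closed span of $\xi(Q)$ is separable.

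The same defect affects (2)$\Rightarrow$(1) in Theorem~\ref{preservers} for the discontinuous preserver $t \mapsto 1 + \gamma 1_{(1,\infty)}(t)$, for which $\varphi \comp d$ is an equilateral metric on $\cL$. For continuous $f$ the space $(\cL, \varphi \comp d)$ is separable, and (5) needs only three-point realisations, so the paper's later arguments are unaffected.
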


The eigenvalue characterisation of Lorentz--Gram matrices
and Krein's certificate of isometric embedding into Lobachevsky space
were recently rediscovered by Tabaghi and Dokmani\'c \cite{Tabaghi}.

Given $\varphi$ as above, we let
\[
f := \cosh \comp \varphi \comp \arcosh : [ 1, \infty ) \to [ 1, \infty )
\]
and note that the condition that $\varphi( x ) = 0$ if and only if $x = 0$
is equivalent to requiring that $f( t ) = 1$ if and only if $t = 1$.
Furthermore, if $Q := \cL$ and $\rho := \varphi \comp d$ then
the initial hypotheses of Theorem~\ref{Krein-embedding} are satisfied, and
\[
\cosh \rho( p, q ) = ( \cosh \comp \varphi \comp d )( p, q ) = f\bigl( [ p, q ] \bigr) %
\qquad \textrm{for any } p, q \in Q = \cL.
\]
It follows from Theorem~\ref{Krein-embedding} that if the matrix
$[ f\bigl( [ x( i ), x( j ) ] \bigr) ]_{i, j =1}^n$ has exactly one positive eigenvalue
for any $x( 1 )$, \ldots, $x( n ) \in \cL$ then there exists a map
\[
\Phi : \cL \to \cL \quad \text{such that} \quad %
\varphi\bigl( d( x, y ) \bigr) = d\bigl( \Phi( x ), \Phi( y ) \bigr) %
\quad \text{for any } x, y \in \cL.
\]
In particular, if $x$, $y$, $z \in \cL$ then
\[
\varphi\bigl( d( x, y ) \bigr) + \varphi\bigl( d( y, z ) \bigr) = %
d\bigl( \Phi( x ), \Phi( y ) \bigr) + d\bigl( \Phi( y ), \Phi( z ) \bigr) \geq %
d\bigl( \Phi( x ), \Phi( z ) \bigr) = \varphi\bigl( d( x, z ) \bigr)
\]
and therefore $\varphi \comp d$ is a metric on $\cL$.

We can summarise this working in the following theorem. The equivalence
of (3) and (4) will be shown below: see Theorem~\ref{bigtestmatrix}.

We use the notation $f[ A ] := [ f( a_{i j} ) ]$
to denote the result of applying a function $f$ entrywise
to a matrix $A = [ a_{i j} ]$ and $f[ - ] := A \mapsto f[ A ]$.

\begin{theorem}\label{preservers}
Let $\varphi : \R_+ \to \R_+$ be such that
$\varphi^{-1}\bigl( \{ 0 \} \bigr) = \{ 0 \}$ and let
\[
f : [ 1, \infty ) \to [ 1, \infty ); \ t \mapsto \cosh \varphi( \arcosh t ).
\]
The following are equivalent.
\begin{itemize}
\item[(1)] There exists a map $\Phi : \cL \to \cL$ such that
\[
d\bigl( \Phi( x ), \Phi( y ) \bigr) = \varphi\bigl( d( x, y ) \bigr) %
\qquad \textrm{for any } x, y \in \cL.
\]
\item[(2)] The matrix
$[ f\bigl( [ x( i ), x( j ) ] \bigr) ]_{i, j = 1}^n \in \cM_{1+}$
for any $x( 1 )$, \ldots, $x( n ) \in \cL$.
\item[(3)] The map $f$ when applied entrywise preserves $\cM_{1+}$,
that is, $f[ \cM_{1+} ] \subseteq \cM_{1+}$.
\item[(4)] The map $f$ when applied entrywise preserves $\cM_{1+}^*$,
that is, $f[ \cM_{1+}^* ] \subseteq \cM_{1+}^*$.
\end{itemize}
Any of these four statements implies the following one.
\begin{itemize}
\item[(5)] The map $\varphi \comp d : \cL \times \cL \to \R_+$ is a metric on $\cL$.
\end{itemize}
\end{theorem}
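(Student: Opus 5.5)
We will prove the chain (1) $\Rightarrow$ (2) $\Leftrightarrow$ (3) $\Rightarrow$ (1), and (4) $\Rightarrow$ (3). The implication (3) $\Rightarrow$ (4) is deferred to Theorem~\ref{bigtestmatrix}, as the paper indicates. Finally we show that (1) implies (5). Throughout, $f = \cosh \comp \varphi \comp \arcosh$, so that
\[
\cosh\bigl( \varphi( d( x, y ) ) \bigr) = f\bigl( [ x, y ] \bigr) \qquad \text{for any } x, y \in \cL.
\]

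For (1) $\Rightarrow$ (2), suppose $\Phi$ is given and $x( 1 ), \ldots, x( n ) \in \cL$. Then
\[
f\bigl( [ x( i ), x( j ) ] \bigr) = \cosh \varphi\bigl( d( x( i ), x( j ) ) \bigr) = \cosh d\bigl( \Phi( x( i ) ), \Phi( x( j ) ) \bigr) = \bigl[ \Phi( x( i ) ), \Phi( x( j ) ) \bigr].
\]
This is a matrix of the form in Theorem~\ref{Krein-Gram}(5), so it lies in $\cM_{1+}$.

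For (2) $\Leftrightarrow$ (3), we use the equivalence of (1) and (5) in Theorem~\ref{Krein-Gram}: a matrix lies in $\cM_{1+}$ exactly when it has the form $[ [ x( i ), x( j ) ] ]$ for some points of $\cL$. Hence $f[ \cM_{1+} ]$ is precisely the set of matrices $[ f( [ x( i ), x( j ) ] ) ]$, and the two conditions coincide. Here $f$ maps into $[ 1, \infty )$, and $f( 1 ) = \cosh \varphi( 0 ) = 1$, so the diagonal condition in $\cM$ is automatic.

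For (2) $\Rightarrow$ (1), apply Theorem~\ref{Krein-embedding} with $Q := \cL$ and $\rho := \varphi \comp d$. The map $\rho$ is symmetric and satisfies $\rho( p, p ) = \varphi( 0 ) = 0$. By the displayed identity, the matrices $[ \cosh \rho( q_i, q_j ) ]$ are exactly the matrices in (2), so each has exactly one positive eigenvalue. Krein's theorem then supplies the required map $\Phi$.

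For (4) $\Rightarrow$ (3), it suffices to note that $\cM_{1+} \subseteq \cM_{1+}^*$. The image of a matrix in $\cM_{1+}$ still has unit diagonal because $f( 1 ) = 1$. So an element of $\cM_{1+}^*$ with unit diagonal lies in $\cM_{1+}$.

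The implication (1) $\Rightarrow$ (5) is the computation already given before the statement. Symmetry and $\varphi( d( x, x ) ) = 0$ are immediate. The triangle inequality follows by transporting through $\Phi$, since $d$ satisfies it. For faithfulness, if $\varphi( d( x, y ) ) = 0$ then $d( x, y ) = 0$, because $\varphi^{-1}( \{ 0 \} ) = \{ 0 \}$, and so $x = y$ since $d$ is faithful.

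The main obstacle is the remaining implication (3) $\Rightarrow$ (4), which is deferred to Theorem~\ref{bigtestmatrix}. It requires showing that preserving matrices with constant diagonal forces preservation when the diagonal entries exceed $1$. My plan is to embed a matrix of $\cM_{1+}^*$ as a limit, or as a principal submatrix after a suitable construction, of matrices in $\cM_{1+}$, and then use the stated closure of $\cM_{1+}^*$ under principal submatrices and pointwise limits.
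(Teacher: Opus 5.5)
Your proof is correct and follows the paper's route: Krein's embedding theorem gives (2)$\Rightarrow$(1), Theorem~\ref{Krein-Gram} identifies $\cM_{1+}$ with Lorentz--Gram matrices to give (1)$\Rightarrow$(2)$\Leftrightarrow$(3), (1)$\Rightarrow$(5) is obtained by transporting the triangle inequality through $\Phi$, and (3)$\Leftrightarrow$(4) is delegated to Theorem~\ref{bigtestmatrix} exactly as the paper does. One caution about your sketch for that deferred step: principal submatrices and entrywise limits of unit-diagonal matrices keep a unit diagonal, so they can never produce a matrix in $\cM_{1+}^*$ with diagonal entries exceeding $1$; the paper instead blows $R$ up into a block matrix $M \in \cM_{1+}$, uses that $f[M]$ keeps this block form, and compresses onto the normalised block indicators, letting the block size $m \to \infty$.
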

The embedding $\Phi$ in Theorem~\ref{preservers}(1) is continuous
if $\varphi$ (equivalently,~$f$) is.

By Proposition~\ref{counterexample}, this leads to a strictly smaller class of functions
than initially considered: if $\vartheta( x ) \equiv \min\{ x, 1 \}$ then
$\vartheta \comp d$
is a metric on~$\cL$ but the corresponding map~$f$ does not preserve
the set $\cM_{1+}$ when acting entrywise.

Hence we modify our initial aim as follows:
we seek to classify the collection of functions of the form
$f : [ 1, \infty ) \to [ 1, \infty )$ such that $f( t ) = 1$ if and only if $t = 1$
and for which the symmetric kernel
\[
\cL \times \cL \to [ 1, \infty ); \ ( x, y ) \mapsto f\bigl( [ x, y ] \bigr)
\]
\emph{has exactly one positive square}: for any $x( 1 )$, \ldots, $x( n ) \in \cL$,
the matrix
\begin{equation}\label{fM}
f[ M ] := [ f\bigl( [ x( i ), x( j ) ] \bigr) ]_{i, j = 1}^n
\end{equation}
has exactly one positive eigenvalue. By a slight abuse of terminology,
we will call such a function $f$ a \emph{distance preserver}.

Thus $f$ is a distance preserver if and only if
$f^{-1}\bigl( \{ 1 \} \bigr) = \{ 1 \}$ and $f[ - ]$
sends the set~$\cM_{1+}$ to itself. In particular, we see that
the class of distance preservers is closed under composition.

\begin{remark}
A symmetric kernel $K : X \times X \to [ 1, \infty )$  with $K( x, x ) = 1$
for any~$x \in X$ is said to be \emph{of real hyperbolic type} \cite{MonodPy, Monod}
if and only if it admits a representation
\[
K( x, y ) = \cosh d\bigl( \Phi( x ), \Phi( y ) \bigr) \qquad ( x, y \in X )
\]
for some $\Phi : X \to \cL$. Equivalently, every matrix $\bigl[ K( x_i, x_j ) \bigr]_{i, j = 1}^n$ has exactly one positive eigenvalue. Thus the problem considered here is exactly the classification of every function
$f$ such that $f \comp K$ is of real hyperbolic type whenever $K$ is. 
\end{remark}

It follows directly from Theorem~\ref{Krein-Gram}
that the Lorentz form $[ \cdot, \cdot ]$ has
exactly one positive square, and so the identity function is a distance preserver
(as it should be).

It also follows directly from Theorem~\ref{Krein-Gram} that if,
for arbitrary $x( 1 )$, \ldots, $x( n ) \in \cL$, the matrix $f[ M ]$
in (\ref{fM}),  can be written in the form $\bv \bv^T - A$,
where the vector $\bv \in \R^n$ and the matrix $A$ is positive semidefinite,
then $f$ is a distance preserver.

\begin{theorem}\label{bigtestmatrix}
Given $R = [ r_{i j} ]_{i, j = 1}^k \in \cM^*$ and $m \in \N$, we let $n := k m$
and define the blow-up block matrix $M = [ m_{p q} ]_{p, q = 1}^n \in \cM$
by setting
\[
m_{p q} := \left\{ \begin{array}{ll}
 1 & \text{if } p = q, \\[1ex]
 r_{i i} & \text{if } p \neq q, \ p, q \in C_i, \\[1ex]
 r_{i j} & \text{if } p \in C_i, \ q \in C_j, \ i \neq j,
\end{array}\right.
\]
where $C_i := \{ ( i - 1 ) m + 1, \ldots, i m \}$ for $i = 1$, \ldots, $k$.
If $R$ has at most one positive eigenvalue then $M \in \cM_{1+}$.

It follows that if $f : [ 1, \infty ) \to [ 1, \infty )$ with $f( 1 ) = 1$ then
$f[ - ]$ preserves $\cM_{1+}$ if and only if $f[ - ]$ preserves $\cM^*_{1+}$.
\end{theorem}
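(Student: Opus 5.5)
The plan is to write the blow-up matrix $M$ as a Kronecker product minus a positive semidefinite diagonal correction, and then apply Lemma~\ref{Loewner}.

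\emph{Membership of $M$ in $\cM$.} Let $J_m$ be the $m \times m$ all-ones matrix and let $D := \diag\bigl( (r_{11} - 1)\Id_{m \times m}, \ldots, (r_{kk} - 1)\Id_{m \times m} \bigr)$. Comparing entries block by block gives
\[
M = R \otimes J_m - D .
\]
Off the diagonal, block $(i,j)$ of $R \otimes J_m$ has every entry equal to $r_{ij}$. On the diagonal, the entries of $R \otimes J_m$ equal $r_{ii}$, and subtracting $r_{ii} - 1$ gives $1$. All entries of $M$ are at least $1$, $M$ is symmetric and its diagonal is constant $1$, so $M \in \cM$.

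\emph{$M$ has exactly one positive eigenvalue.} The eigenvalues of $R \otimes J_m$ are the products $\lambda\mu$, with $\lambda$ an eigenvalue of $R$ and $\mu \in \{ m, 0 \}$. Hence $R \otimes J_m$ has exactly as many positive eigenvalues as $R$, which is at most one. Since every $r_{ii} \ge 1$, the matrix $D$ is positive semidefinite, and $(R \otimes J_m) - M = D$. Lemma~\ref{Loewner} then shows that $M$ has at most one positive eigenvalue. As $\tr M = n > 0$, it has exactly one, so $M \in \cM_{1+}$.

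\emph{Preserving $\cM^*_{1+}$ implies preserving $\cM_{1+}$.} Suppose $f[-]$ preserves $\cM^*_{1+}$ and let $M \in \cM_{1+} \subseteq \cM^*_{1+}$. Then $f[M] \in \cM^*_{1+}$, and its diagonal is $f(1) = 1$, so $f[M] \in \cM_{1+}$.

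\emph{Preserving $\cM_{1+}$ implies preserving $\cM^*_{1+}$.} This is the only step needing care. Suppose $f[-]$ preserves $\cM_{1+}$ and let $R \in \cM^*_{1+}$. The entries of $f[R]$ lie in $[1,\infty)$ and $\tr f[R] \ge k > 0$, so it suffices to show that $f[R]$ has at most one positive eigenvalue.

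For each $m$, form the blow-up $M$ of $R$. By the first part $M \in \cM_{1+}$, hence $f[M] \in \cM_{1+}$. Because $f(1) = 1$, the same block computation gives
\[
f[M] = f[R] \otimes J_m - D_f, \qquad D_f := \diag\bigl( (f(r_{ii}) - 1)\Id_{m \times m} \bigr)_{i = 1}^k .
\]
For $\bv \in \R^k$, the lifted vector $\bv \otimes \bone_m$ therefore satisfies
\[
(\bv \otimes \bone_m)^T f[M] (\bv \otimes \bone_m) = m^2\, \bv^T f[R] \bv - m \sum_{i = 1}^k \bigl( f(r_{ii}) - 1 \bigr) v_i^2 .
\]

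Suppose, for contradiction, that $f[R]$ has two positive eigenvalues. Then there is a two-dimensional subspace $V \subseteq \R^k$ and a constant $c > 0$ with $\bv^T f[R] \bv \ge c |\bv|^2$ for all $\bv \in V$. The correction term is at most $m C |\bv|^2$, where $C := \max_i \bigl( f(r_{ii}) - 1 \bigr)$. So once $m c > C$, the quadratic form of $f[M]$ is strictly positive on the two-dimensional subspace $V \otimes \bone_m$, since the lift is injective. By Courant--Fischer, $f[M]$ then has at least two positive eigenvalues. This contradicts $f[M] \in \cM_{1+}$, so $f[R] \in \cM^*_{1+}$.
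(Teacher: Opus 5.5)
Your proof is correct, and it is organised differently from the paper's. For the first claim, the paper splits $\R^n$ into two pieces. On the within-block mean-zero vectors, $M$ acts with eigenvalues $1 - r_{ii} \le 0$ (total multiplicity $k(m-1)$). On the span of the block indicators $\bone_{C_i}$, $M$ acts as a $k \times k$ matrix which, after rescaling via Sylvester's law of inertia, equals $R - m^{-1} \diag\bigl( r_{11} - 1, \ldots, r_{kk} - 1 \bigr)$. Lemma~\ref{Loewner} is then applied at size $k$. Your Kronecker identity writes $M$ as $R \otimes \bone_m \bone_m^T$ minus a positive semidefinite diagonal matrix. This lets you apply Lemma~\ref{Loewner} once, at full size, and skip the spectral bookkeeping.

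For the second claim, both arguments use the same compression. Your lifted quadratic form $(\bv \otimes \bone_m)^T f[M] (\bv \otimes \bone_m)$ is exactly $m^2 \bv^T \bigl( f[R] - m^{-1} \diag( f(r_{11}) - 1, \ldots, f(r_{kk}) - 1 ) \bigr) \bv$. The paper shows that this $k \times k$ matrix has exactly one positive eigenvalue for every $m$ and lets $m \to \infty$, using that the condition ``second-largest eigenvalue $\le 0$'' is closed. You instead bound the form from below on a two-dimensional positive subspace and invoke Courant--Fischer, which avoids the limit and gives an explicit threshold $m > C/c$. The paper's route yields the slightly stronger fact that $M$ and its $k \times k$ compression have the same number of positive eigenvalues. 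Yours is shorter and uses only the inequality direction actually needed.
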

\begin{proof}
Given $i \in \{ 1, \ldots, k \}$, we note that if $\bu \in \R^n$ is supported on $C_i$,
so that $u_p = 0$ whenever $p \not\in C_i$,
and $\bone_{C_i}^T \bu = 0$ then $M \bu = ( 1 - r_{i i} ) \bu$.
This shows that $1 - r_{i i}$ is an eigenvalue of $M$ with multiplicity at least $m - 1$
and $k ( m - 1 )$ of the eigenvalues of $M$ are non-positive.

Now for each $i \in \{ 1, \ldots, k \}$, let $\bv_i = m^{-1 / 2} \bone_{C_i}$
and note that the set $V = \{ \bv_1, \ldots, \bv_k \}$ is orthogonal
to any vector $\bu$ considered in the previous paragraph.
Furthermore, we have that
\[
\bv_i^T M \bv_i = 1 + ( m - 1 ) r_{i i} =: q_{i i} %
\quad \text{and} \quad %
\bv_i^T M \bv_j = m r_{i j} =: q_{i j} \qquad ( i \neq j ).
\]
Thus on the space spanned by $V$ the matrix $M$ is unitarily equivalent to
$Q = [ q_{i j} ]_{i, j =1}^k$.
By Sylvester's law of inertia, the matrix $Q$ has the same inertia as
\[
D Q D = %
[ \delta_{i j} \bigl( r_{i i} + m^{-1} ( 1 - r_{i i} ) \bigr) + %
( 1 - \delta_{i j} ) r_{ i j} ]_{i, j = 1}^k,
\]
where $D := \diag( m^{-1 / 2}, \ldots, m^{-1 / 2} )$.
In particular, the matrix $M$ has the same number of positive eigenvalues as $D Q D$.
We note that
\[
R - D Q D = \diag\bigl( m^{-1} ( r_{1 1} - 1 ), \ldots, m^{-1} ( r_{k k} - 1 ) \bigr),
\]
which is positive semidefinite, and so $R$ has at least as many positive eigenvalues as~$D Q D$ does, by Lemma~\ref{Loewner}, and so at least as many as $M$ does.
Since $M$ has at least one positive eigenvalue, because its trace is positive,
the first claim follows.

For the second, we assume $f[ - ]$ preserves $\cM_{1+}$
and suppose $R \in \cM_{1+}^*$. The previous result gives that $M \in \cM_{1+}$
and so $f[ M ] \in \cM_{1+}$.
We note that $f[ M ]$ has the same form as $M$, with $r_{i j}$ replaced by $f( r_{i j} )$
for all $i$ and $j$. Hence the matrix
\[
[ \delta_{i j} ( f( r_{i i} ) +  m^{-1} \bigl( 1 - f( r_{i i} ) \bigr) ) + %
( 1 - \delta_{i j} ) f( r_{i j} ) ]_{i, j = 1}^k
\]
has exactly one positive eigenvalue, so letting $m \to \infty$
gives the same for $f[ R ]$. The converse is immediate.
\end{proof}

\begin{corollary}\label{cty}
A distance preserver $f : [ 1, \infty ) \to [ 1, \infty )$ is non-decreasing
on $[ 1, \infty )$ and continuous on $( 1, \infty )$.
\end{corollary}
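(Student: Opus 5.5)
The plan is to use Theorem~\ref{bigtestmatrix} to pass from $\cM_{1+}$ to the relaxed class $\cM_{1+}^*$. There we may test $f$ against $2 \times 2$ matrices with non-constant diagonal, which give scalar inequalities directly. Since a distance preserver has $f( 1 ) = 1$, Theorem~\ref{bigtestmatrix} shows that $f[ - ]$ preserves $\cM^*_{1+}$.

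A real symmetric $2 \times 2$ matrix $\begin{bmatrix} a & b \\ b & c \end{bmatrix}$ with $a$, $b$, $c \ge 1$ has positive trace. It therefore has exactly one positive eigenvalue if and only if its determinant is non-positive, that is, if and only if $b^2 \ge a c$.

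\emph{Monotonicity.} Fix $1 \le a \le b$ and take the matrix $\begin{bmatrix} a & b \\ b & b \end{bmatrix}$. Its determinant is $b ( a - b ) \le 0$, so it lies in $\cM^*_{1+}$. Applying $f$ entrywise keeps it in $\cM^*_{1+}$, and the criterion above gives $f( a ) f( b ) \le f( b )^2$. Since $f( b ) \ge 1 > 0$, we obtain $f( a ) \le f( b )$, so $f$ is non-decreasing.

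\emph{Continuity.} Fix $a$, $c \ge 1$ and take $b = \sqrt{a c}$, so that $\begin{bmatrix} a & b \\ b & c \end{bmatrix}$ has zero determinant and lies in $\cM^*_{1+}$. Its image under $f[ - ]$ then yields
\[
f\bigl( \sqrt{a c} \bigr)^2 \ge f( a ) f( c ).
\]
Set $g( x ) := \log f( e^x )$ for $x \ge 0$; this is well defined and non-negative because $f \ge 1$. The inequality becomes midpoint concavity:
\[
g\bigl( ( x + y ) / 2 \bigr) \ge \bigl( g( x ) + g( y ) \bigr) / 2 .
\]
By the first part, $g$ is also non-decreasing, hence locally bounded. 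A locally bounded midpoint-concave function is concave, by the Bernstein--Doetsch theorem, and a concave function is continuous on the interior of its domain. So $g$ is continuous on $( 0, \infty )$, and hence $f$ is continuous on $( 1, \infty )$.

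If we want to avoid citing the Bernstein--Doetsch theorem, we can argue directly. Suppose $g$ had an upward jump of size $\delta > 0$ at some $x_0 > 0$. Applying midpoint concavity to the points $x_0 - h$ and $x_0 + h$, for small $h$, together with monotonicity, gives $g( x_0 ) \ge g( x_0^- ) + \delta / 2$. Iterating this with dyadic points then contradicts the finiteness of $g$ near $x_0$.

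The only step that needs care is this passage from midpoint concavity plus monotonicity to continuity. Everything else is a direct $2 \times 2$ determinant computation. Continuity at $t = 1$ is not claimed, and indeed it can fail because of the jump term $\gamma$ in the representation~(\ref{rep}).
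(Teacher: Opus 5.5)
Your proposal is correct and follows the paper's proof: pass to $\cM_{1+}^*$, test $f$ on $2 \times 2$ matrices with non-positive determinant to get monotonicity and multiplicative midpoint concavity, then invoke a standard result on midpoint-concave functions. The differences are cosmetic: the paper's monotonicity test matrix has rows $(s, t)$ and $(t, s)$ where yours has rows $(a, b)$ and $(b, b)$, and it cites Roberts--Varberg, Theorem~71A, where you cite Bernstein--Doetsch.

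Your optional direct argument needs no dyadic iteration. One further application of midpoint concavity, to $x_0 - 2h$ and $x_0$, gives $g(x_0 - h) \ge g(x_0^-) + \delta/4 - o(1)$ as $h \to 0+$, which already contradicts $g(x_0 - h) \le g(x_0^-)$.
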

\begin{proof}
For such a function $f$ and any $s$, $t \in [ 1, \infty )$ we see that
\[
\begin{bmatrix} s & \sqrt{s t} \\[1ex] \sqrt{s t} & t \end{bmatrix} \in \cM_{1+}^* %
\implies %
\begin{bmatrix} f( s ) & f\bigl( \sqrt{s t} \bigr) \\[1ex]
 f\bigl( \sqrt{s t} \bigr) & f( t ) \end{bmatrix} \in \cM_{1+}^* %
 \iff \sqrt{f( s ) f( t )} \le f( \sqrt{s t} ).
\]
Furthermore, we may assume $s \le t$ and then
\[
\begin{bmatrix} s & t \\[1ex] t & s \end{bmatrix} \in \cM_{1+}^* %
\implies %
\begin{bmatrix} f( s ) & f( t ) \\[1ex]
 f( t ) & f( s ) \end{bmatrix} \in \cM_{1+}^* \iff f( s ) \le f( t ).
\]
Hence $f$ is multiplicatively midpoint concave and non-decreasing. It follows
that $f$ is continuous on $( 1, \infty )$ \cite[Theorem~71A]{RV}.
\end{proof}

Let $f$ be a distance preserver and let $\gamma := f( {1+} ) - 1 \ge 0$;
this exists as $f$ is non-decreasing and bounded below. Let
\[
\tilde{f} : [ 1, \infty ) \to [ 1, \infty ); \ %
t \mapsto ( 1 + \gamma ) 1_{\{1 \}} ( t ) + 1_{( 1, \infty )}( t ) f( t )
\]
and note that $\tilde{f}$ is continuous and non-decreasing.

We claim that $\tilde{f}[ - ]$ maps $\cM_{1+}$ into $\cM_{1+}^*$.
To see this, let $M = \bv \bv^T - A \in \cM_{1+}$, where $A$ is positive semidefinite
and $\bv \in [ 1, \infty )^n$. For any $\eps > 0$, let
\[
M_\eps := M + \eps \bv \bv^T = \bu \bu^T - A, \qquad %
\text{where } \bu := \sqrt{1 + \eps} \, \bv.
\]
Then $M_\eps \in \cM_{1+}^*$, by Lemma~\ref{Loewner} and the fact
that $M_\eps$ has positive trace. Furthermore, we have that
$M_\eps\in ( 1, \infty )^{n \times n}$
and so $\tilde{f}[ M_\eps ] = f[ M_\eps ] \in \cM_{1+}^*$,
by Theorem~\ref{preservers}(4). Letting $\eps \to 0+$, we see that
$\tilde{f}[ M_\eps ] \to \tilde{f}[ M ]$ entrywise and $\tilde{f}[ M ] \in \cM_{1+}^*$,
as claimed.

\subsection{Characteristics}

For any $\lambda \in ( 0, 1 ]$, let
\begin{equation}\label{phidef}
\theta_\lambda : [ 1, \infty ) \to [ 1, \infty ); \ t \mapsto \lambda t + 1 - \lambda
\end{equation}
and note that $\theta_\lambda$ is strictly increasing, $\theta_\lambda( 1 ) = 1$
and $\theta_\lambda \circ \theta_\mu = \theta_{\lambda \mu}$
for every choice of~$\lambda$ and $\mu$.

We know from Proposition~\ref{example2matrix} that,
given an integer $m \ge 2$ and $a$, $b$, $c \ge 1$, the matrix
\[
N^{(m)}_{a, b, c } \in \cM_{1+} \quad \textrm{if and only if } \quad %
b^2 \geq \theta_{( m - 1 ) / m}( a ) \theta_{( m - 1 ) / m}( c ).
\]
Since we have that $f[ N^{(m)}_{a, b, c} ]= N^{(m)}_{f( a ), f( b ),  f( c )}$,
if $f$ is a distance preserver then
\[
f\bigl( \sqrt{\theta_\lambda( a ) \theta_\lambda( c )} \bigr) \geq %
\sqrt{\theta_\lambda\bigl( f( a ) \bigr) \theta_\lambda( f( c ) \bigr)} %
\qquad ( a, c \geq 1, \ \lambda = ( m - 1 ) / m ).
\]
Letting $m \to \infty$ recovers multiplicative midpoint concavity once again:
\[
f( \sqrt{a c } ) \ge \sqrt{f ( a ) f( c )} \qquad ( a, c \geq 1 ).
\]
Taking $a = c$ gives that
\begin{equation}\label{ineq}
f\bigl( \theta_\lambda( a ) \bigr) \ge \theta_\lambda\bigl( f( a ) \bigr) %
\qquad ( a \ge 1 )
\end{equation}
and so
\[
f\bigl( \theta_{\lambda \mu}( a ) \bigr) = %
f( \theta_\lambda\bigl( \theta_\mu( a ) \bigr) ) \ge %
\theta_\lambda( f\bigl( \theta_\mu( a ) \bigr) ) \ge %
\theta_\lambda( \theta_\mu\bigl( f( a ) \bigr) ) = %
\theta_{\lambda \mu}\bigl( f( a ) \bigr)
\]
for $\lambda = ( m - 1 ) / m$ and $\mu = ( n - 1 ) / n$. Since
\[
\frac{M}{N} = \prod_{j = M + 1}^N \frac{j - 1}{j} \qquad ( M, N \in \N, \ M < N ),
\]
we see that the inequality (\ref{ineq}) holds for all $\lambda \in \Q \cap ( 0, 1 ]$
and so for all $\lambda \in ( 0, 1 ]$ by continuity. [For $a = 1$ the inequality is
trivial and $f$ is continuous on $( 1, \infty )$.]

If $t > s > 1$ then we can write $s = \lambda t + 1 - \lambda$
with $\lambda = ( s - 1 ) / ( t - 1 )$ and (\ref{ineq}) gives that
\begin{align*}
f\bigl( \theta_\lambda( t ) \bigr) \ge \theta_\lambda\bigl( f( t ) \bigr) & \iff
f( \lambda t + 1 - \lambda ) \ge \lambda f( t ) + 1 - \lambda \\[1ex]
 & \iff ( t - 1 ) f( s ) \ge ( s - 1 ) f( t ) + t - 1 - s + 1 \\[1ex]
 & \iff \frac{f( s ) - 1}{s - 1} \ge \frac{f( t ) - 1}{t - 1}.
\end{align*}
Thus the function
\[
( 1, \infty ) \to ( 0, \infty ); \ t \mapsto \frac{f( t ) - 1}{t - 1}
\]
is non-increasing. In particular, if $t > 2$ then
\begin{equation}\label{growthest}
f( t ) \le 1 + ( f( 2 ) - 1 ) ( t - 1 ) = O( t ) \qquad \text{as } t \to \infty.
\end{equation}
We know from the proof of Corollary~\ref{cty} that
\[
F : ( 0, \infty ) \to ( 0, \infty ); \ x \mapsto \log f( e^x )
\]
is concave and non-decreasing, so
\[
F( t x + ( 1 - t ) x_0 ) \ge t F( x ) + ( 1 - t ) F( x_0 ) %
\qquad ( t \in [ 0, 1 ], \ x > x_0 > 0 )
\]
and, taking $t = ( y - x_0 ) / ( x - x_0 )$ for $y \in ( x_0, x )$,
\begin{equation}\label{chord}
\frac{F( y ) - F( x_0 )}{y - x_0} \ge \frac{F( x ) - F( x_0 )}{x - x_0} \ge 0.
\end{equation}
Hence
\[
\alpha := \lim_{t \to \infty} \frac{\log f( t )}{\log t} = %
\lim_{x \to \infty} \frac{F( x )}{x} = %
\lim_{x \to \infty} \frac{F( x ) - F( x_0 )}{x - x_0} \frac{x - x_0}{x} = %
\lim_{x \to \infty} \frac{F( x ) - F( x_0 )}{x - x_0}
\]
exists and lies in $\R_+$. We call $\alpha$ the \emph{order} of $f$.

The inequality (\ref{growthest}) implies there exists $c > 0$
such that $f( t) \le c t$ for all sufficiently large $t$, whence
\[
\alpha \le \lim_{t \to \infty} \frac{\log c + \log t}{\log t} = 1.
\]
Furthermore, if $t = e^x$ and $t_0 = e^{x_0}$, with $t > t_0$, then
\[
\frac{F( x ) - F( x_0 )}{x - x_0} \ge \alpha \iff %
\log\bigl( f( t ) / f( t_0 ) \bigr) \ge \alpha \log( t / t_0 ) \iff %
t^{-\alpha} f( t ) \ge t_0^{-\alpha} f( t_0 ),
\]
so $t \mapsto t^{-\alpha} f( t )$ is non-decreasing on $( 1, \infty )$.
As $f$ is non-decreasing on $[ 1, \infty )$, we see that
\[
\lim_{t \to 1+} t^{-\alpha} f( t ) = f( 1+ ) \ge 1,
\]
so the \emph{type} $C := \lim_{t \to \infty} t^{-\alpha} f( t ) \in [ f( 1+ ), \infty ]$
and $f( t ) \ge f( 1+ ) t^\alpha$ for all $t > 1$.

We say that the order $\alpha \in [ 0, 1 ]$ and the type $C \in [ f( 1+ ), \infty ]$
are \emph{characteristics} of~$f$.

\subsection{Four examples}\label{examples}

If $x \in \cL$ then
\[
1 = [ x, x ] = x_0^2 - \langle x', x' \rangle \implies%
\langle x', x' \rangle < 1 + \langle x', x' \rangle = x_0^2.
\]
Hence if $x$, $y \in \cL$ then $| \langle x', y' \rangle| < x_0 y_0$,
by the Cauchy--Schwarz inequality, so writing
\[
[ x, y ] = x_0 y_0 \Bigl( 1 - \frac{\langle x', y' \rangle}{x_0 y_0} \Bigr)
\]
gives the series expansions
\[
[ x, y ]^a = %
x_0^a y_0^a \Bigl( 1 - a \frac{\langle x', y' \rangle}{x_0 y_0} + %
\frac{a ( a - 1 )}{2} \Bigl( \frac{\langle x', y' \rangle}{x_0 y_0}  \Bigr)^2 - %
\frac{a ( a - 1 ) ( a - 2 )}{6} %
\Bigl( \frac{\langle x', y' \rangle}{x_0 y_0}  \Bigr)^3 + \cdots \Bigl)
\]
and
\[
\log [ x, y ] = \log x_0 + \log y_0 - \frac{\langle x', y' \rangle}{x_0 y_0} - %
\frac{1}{2} \Bigl( \frac{\langle x', y' \rangle}{x_0 y_0} \Bigr)^2 - %
\frac{1}{3} \Bigl( \frac{\langle x', y' \rangle}{x_0 y_0} \Bigr)^3 - \cdots.
\]
Thus if $f_1( t ) \equiv t^a$, where $a \in ( 0, 1 ]$, the $n \times n$ matrix
\[
[ f_1\bigl( [ x(i), x(j) ] \bigr) ] = %
D \bone_n ( D \bone_n )^T - D H_1\bigl[ \langle x( i )'', x( j )'' \rangle \bigr] D,
\]
where
\[
D := \diag( x( 1 )_0^a, \ldots, x( n )_0^a ), \qquad %
x( i )'' := x( i )'  / x( i )_0 \in \ell^2( \N )
\]
and $H_1$ is an absolutely monotone function on $( -1, 1 )$.

Similarly, if $f_2( t ) \equiv 1 + b \log t$, where $b \in ( 0, \infty )$,
then the matrix
\begin{align*}
[ f_2\bigl( [ x(i), x(j) ] \bigr) ] & = \bone_{n \times n} + %
\bv \bone_{n \times 1}^T +  \bone_{n \times 1} \bv^T  - %
H_2[ \langle x( i )'', x( j )'' \rangle ] \\[1ex]
 & = %
( \bone_n + \bv ) ( \bone_n + \bv )^T - %
\bv \bv^T - H_2\bigl[ \langle x( i )'', x( j )'' \rangle \bigr],
\end{align*}
where $\bv := ( b \log x( 1 )_0, \ldots, b \log x( n )_0 )^T$,
the vectors $x( 1 )''$, \ldots, $x( n )''$ are as before and
$H_2$ is an absolutely monotone function on $( -1, 1 )$.

Since $\bv \bv^T$ is positive semidefinite,
the Gram matrix $[ \langle x( i )'', x( j )'' \rangle ]$
is positive semidefinite and absolutely monotone functions
preserve positive semidefiniteness,
we see that the functions $f_1$ and $f_2$ are distance preservers.

For a third example, let $f_3( t ) := 1 + p - p t^{-a}$,
where $p > 0$ and $a > 0$, and note that
\[
f_3'( t ) = p a t^{-a - 1} > 0 ,
\]
so $f_3 : [ 1, \infty ) \to [ 1, 1 + p ) \subseteq [ 1, \infty )$.
Now, we have that
\begin{align*}
f_3\bigl( [ x, y ] \bigr) & = 1 + p - p ( x_0 y_0 - \langle x', y' \rangle )^{-a} \\[1ex]
 & = 1 + p - p x_0^{-a} y_0^{-a} %
 ( 1 + a \langle x'', y'' \rangle + \frac{a ( a + 1 )}{2} \langle x'', y'' \rangle^2 + \cdots ),
\end{align*}
where $x = x_0 ( 1, x'' )$ and $y = y_0 ( 1, y'' )$. Therefore, with the previous notation,
\[
\bigl[ f_3( [ x( i ), x( j ) ] ) \bigr] = ( 1 + p ) \bone_n \bone_n^T - %
p D^{-1} H_3\bigl[ \langle x( i )'', x( j )'' \rangle \bigr] D^{-1},
\]
where the absolutely monotone function $H_3$ is such that
\[
H_3( x ) := \sum_{n = 0}^\infty \binom{a + n - 1}{n} x^n \qquad ( -1 < x < 1 ).
\]
Hence $f_3$ is a distance preserver. Moreover, letting $a \to \infty$
gives the existence of a family of discontinuous preservers,
\[
t \mapsto \left\{ \begin{array}{ll}
 1 & \text{if } t = 1, \\[1ex]
 1 + p & \text{if } t > 1.
\end{array}\right.
\]

Finally, if $q \in [ 1, \infty )$ then
\[
f_4 : [ 1, \infty ) \to [ 1, \infty ); \ t \mapsto 1 - q + q t
\]
is a distance preserver, since $f_4( 1 ) = 1$, $f_4'( t ) = q > 0$ and
Theorem~\ref{Krein-Gram} shows that
\[
f_4[ M ] = ( 1 - q ) \bone \bone^T + q M = %
q M - ( q - 1 ) \bone \bone^T \in \cM_{1+} \qquad ( M \in \cM_{1+} ).
\]
Suppose  $M = \bigl[ [ x( i ), x( j ) ] \bigr]_{i, j =1}^n$ for
$x( 1 )$, \ldots, $x( n ) \in \cL$ and, for each $i \in \{ 1, \ldots, n \}$, let
\[
y( i ) := \bigl( \sqrt{q} x( i )_0, y( i )' \bigr), \quad \text{where }
y( i )' := ( \sqrt{q - 1}, \sqrt{q} x( i )' ) \in \ell^2( \N)
\]
is produced by applying a scaled forward shift to $x( i )'$
and inserting $\sqrt{q - 1}$ as the first coordinate. Then
\[
[ y( i ), y( j ) ] = 1 - q + q [ x( i ), x( j ) ] = f_4[ M ]_{i j} \qquad ( i, j \in \{ 1, \ldots, n \} ).
\]
We note also that
\[
1 - q' + q' ( 1 - q + q t ) = 1 - q' + q' - q' q + q' q t = 1 - q' q + q' q t %
\qquad ( q, q', t \ge 1 )
\]
and so $q \mapsto f_4$ is a semigroup homomorphism from $[ 1, \infty )$ under multiplication to the set of continuous strictly increasing maps
from $[ 1, \infty )$ onto itself under composition.

\subsection{Positive semidefiniteness and conditionally negative definiteness}
If
\[
r : ( 0, \infty ) \to ( 0, \infty ); \ x \mapsto 1 / x
\]
is the reciprocal map and $x = x_0 ( 1, x'' )$, $y = y_0 ( 1, y'' ) \in \cL$ then
\begin{align}
r\bigl( [ x, y ] \bigr) & = x_0^{-1} y_0^{-1} ( 1 - \langle x'', y'' \rangle ) ^{-1} \label{kernel:NP} \\[1ex]
 & = x_0^{-1} y_0^{-1} %
 ( 1 + \langle x'', y'' \rangle + \langle x'', y''\rangle^2 + \cdots ) = %
 x_0^{-1} y_0^{-1} R\bigl( \langle x'', y'' \rangle \bigr) \nonumber
\end{align}
and the function $R$ is absolutely monotone on $( -1, 1 )$.
Hence the entrywise map $r[ - ]$ sends $\cM_{1+}$ into
the collection of positive semidefinite matrices $\cS_+$; for convenience, we
let
\begin{align*}
\cS & := %
\bigcup_{n = 1}^\infty\{ A = [ a_{i j} ] \in \R^{n \times n} : a_{i j} = a_{j i} %
\textrm{ for any } i, j \} \\[1ex]
\textrm{and} \quad \cS_+ & := \{ A \in \cS : A \textrm{ is positive semidefinite} \}.
\end{align*}
This is a variation on a result of Bapat \cite[Proof of Lemma~6]{Bapat}, which is
a consequence of a result of Micchelli \cite[Corollary~3.1]{Mic}:
if $A = [ a_{i j} ] \in \cS \cap \bigcup_{n = 1}^\infty ( 0, \infty )^{n \times n}$
has exactly one positive eigenvalue then its \emph{Hadamard inverse}
$A^{\circ -1} := [ a_{i j}^{-1} ]$ is positive semidefinite and
\emph{infinitely divisible} \cite{Horn}.

The Hadamard inverse is also relevant to the theory of complete Nevanlinna--Pick kernels.
Given $x= x_0 ( 1, x'' )$ and $y = y_0 ( 1, y'' ) \in \cL$, we have from (\ref{kernel:NP}) that
\[
\frac{1}{[x,y]} = \frac{x_0^{-1}y_0^{-1}}{1 - \langle x'', y'' \rangle}.
\]
Thus, letting $B$ denote the open unit ball of $\ell^2( \N )$, the Drury--Arveson kernel
\[
B \times B \to ( 0, \infty ); \ ( z, w ) \mapsto \frac{1}{1-\langle z, w \rangle}
\]
can be seen as a perturbation of the restriction of the reciprocal Lorentz form
to the natural embedding $\{ x_0 ( 1, x'' ) \in \cL : x'' \in B \}$ of $B$ into $\cL$.
In particular, the reciprocal Lorentz form
is a complete Nevanlinna--Pick kernel \cite[Theorem~3.10]{AglerMcCarthy}.

The working above for $f_2$ shows that if $M = [ x( i ), x( j ) ] \in \cM_{1+}$ then
\[
\log M = \log[ \bx_0 ] \bone^T + \bone \log[ \bx_0 ]^T - %
H_2[ \langle x( i )'', x( j )'' \rangle ] \in \cN,
\]
where $\bx_0 = ( x( 1 )_0, \ldots, x( n )_0 )^T$ and
\[
\cN := \bigcup_{n = 1}^\infty \{ A \in \R^{n \times n } : %
A \in \cS \textrm{ and } \bv^T A \bv \leq 0 %
\textrm{ for any } \bv \in \C \bone_{n \times 1}^\perp \},
\]
the set of conditionally negative definite matrices.

We note that the inclusion
\[
\log[ \cM_{1+} ] \subseteq \cN_0 := %
\bigcup_{n = 1}^\infty \{ A = [ a_{i j} ] \in [ 0, \infty )^{n \times n} :  A \in \cN %
\text{ and } a_{i i } = 0 \text{ for each $i$} \}
\]
is strict: if $B$ is the squared-distance matrix given by the points
$-\sqrt{a}$, $0$ and $\sqrt{a} \in \R$ for some $a > 0$, so that
\[
B = \begin{bmatrix} 0 & a & 4 a \\ a & 0 & a \\ 4 a & a & 0 \end{bmatrix} \in \cN_0,
\]
then
\[
\exp[ B ] = \begin{bmatrix}
 1 & e^a & e^{4 a} \\[1ex]
 e^a & 1 & e^a \\[1ex]
 e^{4 a} & e^a & 1
\end{bmatrix} \not\in \cM_{1+},
\]
by Proposition~\ref{examplematrix}, since
\[
e^{4 a} - 2 e^{2 a} + 1 = ( e^{2 a } - 1 )^2 > 0.
\]
The collection $\cN_0$ is precisely the set of all Euclidean squared-distance matrices;
this is essentially due to Schoenberg \cite{Sch35}.

The concept of conditional negative definiteness is fundamental to
the construction of Gaussian fields with stationary increments.
Following Faraut and Harzallah (see \cite{Far}),
Istas~\cite[Theorem~4.1]{Istas} showed that, on the
finite-dimensional real hyperbolic
space~$\cL_n := \bigl( \ell^2( \{ 0, \ldots, n - 1\} ) \times \{ \bzero \} \bigr) \cap \cL$,
the kernel
\[
d^{2 H} : \cL_n \times \cL_n \to \R_+; \ ( x, y ) \mapsto \bigl( \arcosh [ x, y ] \bigr)^{2 H}
\]
is conditionally negative definite, and so hyperbolic fractional Brownian motion with Hurst index $H$ exists,
if and only if $H \in ( 0, 1 / 2 ]$.

We note that if $M \in \cM \cap \cN$ then $M \in \cM_{1+}$
and if $M \in \cM^* \cap \cN$ then $M \in \cM^*_{1+}$,
by \cite[Corollary~4.1.5]{BR}.

\subsection{Additive representation for preservers}

The result of Bapat and Micchelli mentioned above allows us to prove the following
theorem, which is our first step towards finding an additive representation
for preservers.

\begin{theorem}\label{babyadditive}
Let $f : [ 1, \infty ) \to [ 1, \infty )$ have the integral representation
\begin{equation}\label{additive}
f( t ) = C t^\alpha - \int_{\R_+} t^{-s} \std\nu( s ) \qquad ( t \ge 1 ),
\end{equation}
where $\alpha \in [ 0, 1 ]$, the finite Radon measure $\nu$
is supported on $\R_+$ and $C :=1 + \nu( \R_+ )$.
Then $f$ is a distance preserver, with order $\alpha$ and type
$C - \nu\bigl( \{ 0 \}\bigr) 1_{\{ 0 \}}( \alpha )$,
as long as~$\alpha$ and $\nu|_{( 0, \infty )}$ are not both zero.
\end{theorem}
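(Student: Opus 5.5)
We have to show three things: $f$ maps into $[1,\infty)$ with $f^{-1}(\{1\})=\{1\}$; $f[-]$ preserves $\cM_{1+}$; and $f$ has the stated order and type.

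\emph{Range and fixed point.} Since $C = 1+\nu(\R_+)$, we have $f(1)=1$. Differentiating under the integral gives
\[
f'(t) = \alpha C t^{\alpha-1} + \int_{\R_+} s t^{-s-1}\std\nu(s) \ge 0 .
\]
This is strictly positive for $t>1$ unless both $\alpha=0$ and $\nu|_{(0,\infty)}=0$. Hence $f$ is strictly increasing and $f(t)>1$ for $t>1$, which is exactly where the proviso in the statement is used.

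\emph{Order and type.} If $\alpha>0$, then $t^{-\alpha}f(t)\to C$ by dominated convergence, since $t^{-\alpha-s}\to0$; so the order is $\alpha$ and the type is $C$. If $\alpha=0$, then $f(t)\to C-\nu(\{0\})$, because $t^{-s}\to 1_{\{0\}}(s)$. The limit is finite, so the order is $0$ and the type is $C-\nu(\{0\})$.

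\emph{Preservation, the main step.} Take $x(1),\dots,x(n)\in\cL$ and write $x(i)=x(i)_0(1,x(i)'')$ as in Subsection~\ref{examples}. Set $g_{ij}:=\langle x(i)'',x(j)''\rangle$, whose absolute values are less than $1$. Then
\[
[x(i),x(j)]^{-s} = x(i)_0^{-s}x(j)_0^{-s}(1-g_{ij})^{-s}.
\]
For $s>0$, the function $(1-g)^{-s}=H_{(s)}(g)$ is absolutely monotone on $(-1,1)$, as with $H_3$. So the matrix $\bigl[[x(i),x(j)]^{-s}\bigr]$ is positive semidefinite by Schur's product theorem applied to the Gram matrix $[g_{ij}]$. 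For $s=0$ the matrix is $\bone\bone^T$, which is also positive semidefinite. Integrating against the positive measure $\nu$ gives that
\[
P:=\Bigl[\int t_{ij}^{-s}\std\nu(s)\Bigr], \qquad t_{ij}:=[x(i),x(j)],
\]
is positive semidefinite. Measurability in $s$ is clear, and the entries are bounded by $\nu(\R_+)$.

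Next, the matrix $[t_{ij}^\alpha]$ can be written as $\bv\bv^T-A$ with $A$ positive semidefinite. For $\alpha\in(0,1]$ this is the $f_1$ computation. For $\alpha=0$ it is just $\bone\bone^T$. Therefore
\[
f[M]=C\bv\bv^T-(CA+P)=(\sqrt C\,\bv)(\sqrt C\,\bv)^T-(CA+P).
\]
By Theorem~\ref{Krein-Gram}(4), this matrix has at most one positive eigenvalue. Its diagonal entries are $f(1)=1$, so its trace is positive and it has exactly one positive eigenvalue. Its entries lie in $[1,\infty)$ by the first step, so $f[M]\in\cM_{1+}$, and $f$ is a distance preserver.

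\emph{Where care is needed.} The hardest point is the $f_1$ representation when $\alpha<1$. There the absolutely monotone function is $H_1(g)=1-(1-g)^\alpha$, and its Taylor coefficients $\alpha$, $\alpha(1-\alpha)/2,\ldots$ are all nonnegative. It must be applied entrywise to the Gram matrix $[g_{ij}]$ and then conjugated by the diagonal matrix $D$, which keeps it positive semidefinite. Apart from this, one only has to note that the sign conventions in the expansion give $A=DH_1[g]D$ with $H_1(0)=0$.
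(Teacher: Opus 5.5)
Your proof is correct and follows the paper's argument. You write $f[M] = C M^{\circ\alpha} - \int_{\R_+} M^{\circ -s}\std\nu(s)$, express the first term as $\bv\bv^T - A$ with $A$ positive semidefinite via the $f_1$ computation, show the integral term is positive semidefinite, and conclude with Theorem~\ref{Krein-Gram}. The only difference is how you get positive semidefiniteness of $M^{\circ -s}$: you derive it directly from the binomial series for $(1-g)^{-s}$ and the Schur product theorem (the $H_3$ computation used for $f_3$), whereas the paper simply cites the Bapat--Micchelli result on Hadamard inverses of positive matrices with exactly one positive eigenvalue.
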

\begin{proof}
It is immediate that $f( 1 ) = 1$ and, as $| s t^{-s -1} | \le ( e \, t_0 \log t_0 )^{-1}$
for all $t \ge t_0 > 1$, we may differentiate under the integral sign to see that
\[
f'( t ) = \alpha C t^{\alpha - 1} + \int_{\R_+} s t^{-s - 1} \std\nu( s ) \qquad ( t > 1 ),
\]
which is positive as long as $\alpha > 0$ or $\nu \neq 0$ on $( 0, \infty )$. Hence $f$
is strictly increasing.

Given any $M \in \cM_{1+}$, we have that
\[
f[ M ] = C M^{\circ \alpha} - \int_{\R_+} M^{\circ -s} \std\nu( s ),
\]
where $[ m_{i j} ]^{\circ x} = [ m^x_{i j } ]$ is the Hadamard power for any $x$.
The first term $C M^{\circ \alpha}$ is of the form $\bv \bv^T - A$,
where $A$ is positive semidefinite, by Theorem~\ref{Krein-Gram}
and the fact that $f_1$ is a distance preserver. Moreover, the
Bapat--Micchelli result gives that
$M^{\circ -s} = ( M^{\circ -1} )^{\circ s}$ is positive semidefinite for all
$s \ge 0$ and the result follows from another application of
Theorem~\ref{Krein-Gram}; the statement about characteristics is immediate.
\end{proof}

We note the following examples of the additive representation (\ref{additive}),
where $\delta_x$ denotes the measure with unit mass supported at $x$.
\[
\begin{array}{lll}
f_1( t ) \equiv t^a & a \in ( 0, 1 ] & \alpha = a, \ \nu = 0 \\[1ex]
f_3( t ) \equiv 1 + p - p t^{-a} & p, a > 0  & \alpha = 0, \ \nu = p \delta_a \\[1ex]
f_4( t ) \equiv 1 - q + q t & q \ge 1 & \alpha = 1, \ \nu = ( q - 1 ) \delta_0
\end{array}
\]
If $f$ has the representation (\ref{additive}) then $f( t ) = O( t^\alpha )$
as $t \to \infty$, so $f_2$ cannot have such a representation.
However, for any $b > 0$, if $a \in ( 0, 1 )$ is sufficiently small then taking
the measure $\nu = ( a^{-1} b - 1 ) \delta_0$ in (\ref{additive}) gives
\[
f( t ) = a^{-1} b t^a - a^{-1} b + 1 = %
1 + b \frac{e^{a \log t} - 1}{a} \to 1 + b \log t = f_2( t ) \qquad \text{as } a \to {0+}.
\]
This motivates the introduction of the following two families of preservers.

Given any $\alpha \in ( 0, 1 ]$,
any finite positive Radon measure $\nu$ supported on $( 0, \infty )$ and
any $\beta \geq \alpha ( 1 + \nu\bigl( ( 0, \infty ) \bigr) )$, setting
\[
f_{\alpha, \beta, \nu}( t ) := %
1 + \beta \frac{t^\alpha - 1}{\alpha} + \int_{( 0, \infty )} ( 1 - t^{-s} ) \std\nu( s ) %
\qquad ( t \geq 1 )
\]
defines a preserver: note that
\[
f_{\alpha, \beta, \nu}'( t ) = %
\beta t^{\alpha - 1} + \int_{( 0, \infty )} s t^{-s - 1} \std\nu( s ) \qquad ( t > 1 )
\]
and
\[
f_{\alpha, \beta, \nu}[ M ] = %
\frac{\beta}{\alpha} M^{\circ \alpha} - %
\Bigl( \frac{\beta}{\alpha} - 1 - \nu\bigl( ( 0, \infty ) \bigr) \Bigr) \bone \bone^T - %
\int_{( 0, \infty )} M^{\circ -s} \std\nu( s ).
\]
For $\alpha = 0$ we require $\beta \ge 0$ and $\nu$ need not be finite
but it satisfies the following integrability condition:
\[
\int_{( 0, \infty )} \min\{ s, 1 \} \std\nu( s ) < \infty \quad \iff \quad
\int_{( 0, \infty )} \frac{s}{1 + s} \std\nu( s ) < \infty.
\]
We also insist that $\beta$ and $\nu$ cannot both be zero. Then setting
\[
f_{0, \beta, \nu}( t ) := %
1 + \beta \log t + \int_{( 0, \infty )} ( 1 - t^{-s} ) \std\nu( s ) %
\qquad ( t \ge 1 )
\]
gives a preserver. To see this, we first write
$f( t ) := f_{0, \beta, \nu}( t ) = 1 + B( \log t )$,
where
\[
B( x ) := \beta x + \int_{( 0, \infty )} ( 1 - e^{-x s} ) \std\nu( s ) \qquad ( x \ge 0 ).
\]
Thus $B$ is a Bernstein function with $B( 0 ) = 0$ (see \cite[Chapter~3]{SSV})
and
\[
f[ M ] = \bone \bone^T + B\bigl[ \log[ M ] \bigr] %
\qquad \text{for any } M \in \cM_{1+}.
\]
As noted above, we have that $\log[ M ] \in \cN_0$,
the closed cone of conditionally negative definite matrices with non-negative entries
and zeros on the leading diagonal. We know that Bernstein functions
are entrywise preservers of this class, because $\exp[ -s A ]$
is positive semidefinite for any $A \in \cN_0$ and any $s > 0$,
and so $f[ M ] \in \cN \cap \cM \subseteq \cM_{1+}$.

[We note also that $f$ is strictly increasing: if $\beta >0$ then
$B'( x ) \geq \beta > 0$ and if $\beta = 0$ then $B( x ) = B( y )$
implies that $e^{-x s} = e^{-y s}$ for $\nu$-almost all $s$, so $x = y$.]

\subsection{Closure under composition}

If $g_1( t ) = 1 + B_1( \log t )$ and $g_2( t ) = 1 + B_2( \log t )$, where
$B_1$ and $B_2$ are Bernstein functions that vanish at the origin, then
\[
g_2\bigl( g_1( t ) \bigr) = 1 + B_2( \log\bigl( 1 + B_1( \log t ) \bigr) ).
\]
Since $x \mapsto \log( 1 + x )$ is a Bernstein function that vanishes at the
origin and the class of Bernstein functions is closed under composition,
the function $g_2 \comp g_1$ is of the same form.

Next we suppose $\alpha \in ( 0, 1 ]$, $\beta \ge 0$ and $\nu$
is a finite positive Radon measure on $\R_+$
and write
\[
f( t ) = %
C t^\alpha - \int_{\R_+} t^{-s} \std\nu( s ) = %
C t^\alpha \Bigl( 1 - \frac{1}{C} \int_{\R_+} t^{-s - \alpha} \std\nu_0( s ) \Bigr),
\]
where
\[
C \ge a := 1 + \nu\bigl( ( 0, \infty ) \bigr) \quad \text{and} \quad %
\nu_0 : A \mapsto ( C - a ) \delta_0( A ) + \nu\bigl( A \cap ( 0, \infty ) \bigr).
\]
We also let
\[
\psi( t ) := \frac{1}{C} \int_{\R_+} t^{-s - \alpha} \std\nu_0( s ) = %
\int_{\R_+} t^{-r} \std\tilde{\nu}_\alpha( r ),
\]
where $\tilde{\nu}_\alpha : A \mapsto C^{-1} \nu_0\bigl(  ( A - \alpha ) \cap \R_+ \bigr)$,
and we have that
\[
0 \leq \psi( t ) \leq \psi( 1 ) = \frac{C - 1}{C} < 1 \qquad ( t \ge 1 ).
\]
If $g$ is another function of the same form, so that
\[
g( t ) = C' t^{\alpha'} - \int_{\R_+} t^{-s} \std\nu_0'( s ) \qquad ( t \ge 1 ),
\]
then
\begin{equation}\label{gcompf}
g\bigl( f( t ) \bigr) = C' f( t )^{\alpha'} - \int_{\R_+} f( t )^{-s} \std\nu_0'( s ).
\end{equation}
We know that, for $| z | < 1$,
\[
1 - ( 1 - z )^{\alpha'}= %
\sum_{n = 1}^\infty c_n z^n, \quad \text{where } %
c_n:= \frac{\alpha' ( 1 - \alpha' ) \cdots ( n - 1 - \alpha' )}{n!} \ge 0.
\]
Hence we can write
\[
C^{\alpha'} t^{\alpha \alpha'} - f( t )^{\alpha'} = %
C^{\alpha'} t^{\alpha \alpha'} ( 1 - \bigl( 1 - \psi( t ) \bigr)^{\alpha'}  ) = %
C^{\alpha'} \sum_{n = 1}^\infty c_n t^{\alpha \alpha'} \psi( t )^n,
\]
and therefore
\[
f( t )^{\alpha'} = C^{\alpha'} t^{\alpha \alpha'} - \int_{\R_+} t^{-s} \std\sigma( s ),
\]
where, with $\star$ denoting convolution on $\R_+$, the measure
\[
\sigma : A \mapsto %
C^{\alpha'} \sum_{n = 1}^\infty c_n \tilde{\nu}_\alpha^{\star n}( A + \alpha \alpha' )
\]
has total mass $C^{\alpha'} - 1$ (which may be seen by taking $t = 1$
in the previous expression for $f( t )^{\alpha'}$).

For the second term on the right-hand side of (\ref{gcompf}),
if $s \ge 0$ then
\[
f( t )^{-s} = C^{-s} t^{-s \alpha} \bigl( 1 - \psi( t ) \bigr)^{-s} = %
C^{-s} t^{-s \alpha} \sum_{n = 0}^\infty d_n( s ) \psi( t )^n,
\]
where
\[
d_n( s ) = ( -1 )^n \binom{-s}{n} = \frac{s ( 1 + s ) \cdots ( n - 1 + s )}{n!} \ge 0.
\]
Thus
\[
f( t )^{-s} = \int_{\R_+} t^{-r} \std\tau_s( r ), \quad \text{where } %
\tau_s : A \mapsto C^{-s} \sum_{n = 0}^\infty %
d_n( s ) \tilde{\nu}_\alpha^{\star n}\bigl( ( A - s \alpha ) \cap \R_+ \bigr),
\]
and taking $t = 1$ shows that $\tau_s$ is a probability measure. It follows that
\begin{align*}
g\bigl( f( t ) \bigr) & = C' C^{\alpha'} t^{\alpha \alpha'} - %
C' \int_{\R_+} t^{-s} \std\sigma( s ) - %
\int_{\R_+} \int_{\R_+} t^{-r} \std\tau_s( r ) \std\nu_0'( s ) \\[1ex]
 & = C' C^{\alpha'} t^{\alpha \alpha'} - \int_{\R_+} t^{-s} \std\rho( s ),
\end{align*}
by Tonelli's theorem, where
\[
\rho := C' \sigma + \int_{\R_+} \tau_s \std\nu_0'( s )
\]
has total mass $C' ( C^{\alpha'} - 1 ) + ( C' - 1 ) = C' C^{\alpha'} - 1$, as required.

We next consider the mixed cases. If
\[
f( t ) = 1 + C ( t^\alpha - 1 ) + \int_{( 0, \infty )} ( 1 - t^{-s} ) \std\nu( s ) %
\qquad ( t \ge 1 )
\]
and $g( t ) \equiv 1 + B( \log t )$ for a Bernstein function $B$ that vanishes at the
origin then $f \comp g$ has the same form, since
\[
x \mapsto ( 1 + x )^\alpha - 1 \qquad \text{and} \qquad %
x \mapsto 1 - ( 1 + x )^{-s}
\]
are Bernstein functions that vanish at the origin and this class forms a cone
that is closed under pointwise limits. Finally, as
\[
f( t ) = C t^\alpha \bigl( 1 - \psi( t ) \bigr) \quad \implies \quad %
F( x ) := \log f( e^x ) = \log C + \alpha x + \log\bigl( 1 - \psi( e^x ) \bigr),
\]
it suffices to show that $F'$ is completely monotone \cite[Chapter~1]{SSV}.
We can write $\xi( x ) := \psi( e^x )$ and then
\[
F'( x ) = \alpha + \frac{\rd}{\rd x} \log\bigl( 1 - \psi( e^x ) \bigr) = %
\alpha + \frac{-\xi'( x )}{1 - \xi( x )} = %
\alpha - \xi'( x ) \sum_{n = 0}^\infty \xi( x )^n,
\]
where
\[
\xi( x ) = \int_{\R_+} e^{-r x} \std\tilde{\nu}_\alpha( r ) \in [ 0, 1 ) %
\qquad ( x > 0 ).
\]
It is immediate that $\xi$ is completely monotone
and therefore so are the functions $-\xi'$ and $\xi^n$ for any $n \ge 1$,
as the class of completely monotone functions is closed under products.
As it is also closed under sums and pointwise convergence, we have the
result.

We let $\cR_1$ denote the collection of finite positive Radon measures
on $\R_+$ and $\cR_2$ the collection of positive Radon measures
on $( 0, \infty )$ satisfying the integrability condition
\[
\int_{( 0, \infty )} \min\{ s, 1 \} \std\nu( s ) < \infty %
\qquad \text{for any } \nu \in \cR_2.
\]
Then the working above shows that $\cF := \cF_1 \cup \cF_2$ is closed
under composition, where
\begin{align*} 
\cF_1 & := \Bigl\{ f : t \mapsto %
C t^\alpha - \int_{\R_+} t^{-s} \std\nu_1( s ) \mid %
\alpha \in ( 0, 1 ], \ \nu_1 \in \cR_1, \ %
C = 1 + \nu_1( \R_+ ) \Bigr\} \\[1ex]
\text{and} & \\[1ex]%
\cF_2 & := \Bigl\{ f : t \mapsto %
1 + \beta \log t + \int_{( 0, \infty )} ( 1 - t^{-s} ) \std\nu_2( s ) \mid %
( \beta, \nu_2 ) \in ( \R_+ \times \cR_2 ) \setminus \{ ( 0, 0 ) \} \Bigr\}.
\end{align*}
If $\cB_0$ denotes the set of non-zero Bernstein functions that vanish at $0$,
\[
\Bigl\{ f : \R_+ \to \R_+; \ %
x \mapsto \beta x + \int_{( 0, \infty )} ( 1 - e^{-x s} ) \std\nu( s ) \mid %
( \beta, \nu ) \in ( \R_+ \times \cR_2 ) \setminus \{ ( 0, 0 ) \} \Bigr\},
\]
then we have that
\[
\cF_2 = \{ f : t \mapsto 1 + B( \log t ) \mid B \in \cB_0 \}.
\]
We can unify the representations of $\cF_1$ and $\cF_2$ if we make the
following definition:
\[
\frac{t^\alpha - 1}{\alpha}\Bigr|_{\alpha = 0} := %
\lim_{\alpha \to 0+} \frac{t^\alpha - 1}{\alpha} = \log t \qquad ( t \ge 1).
\]
Then $f \in \cF$ if and only if
\begin{equation}\label{uniform}
f( t ) = %
1 + \beta \frac{t^\alpha - 1}{\alpha} + \int_{( 0, \infty )} ( 1 - t^{-s} ) \std\nu( s ),
\end{equation}
where $\alpha \in [ 0, 1 ]$,
$( \beta, \nu ) \in ( \R_+ \times \cR_2 ) \setminus \{ ( 0, 0 ) \}$
and $\beta \ge \alpha ( 1 + \nu\bigl( ( 0, \infty ) \bigr) )$,
and this inequality is always satisfied if $\alpha = 0$.

This form is immediate if $\alpha = 0$, and if $\alpha \in ( 0, 1 ]$ then $\nu$
must be finite, by the inequality, and we can write $f$ as an element of $\cF_1$ with
\[
C = \frac{\beta}{\alpha} \quad \text{and} \quad %
\nu_1 : A \mapsto ( C - 1 - \nu\bigl( ( 0, \infty ) \bigr) ) \delta_0( A ) + %
\nu\bigl( A \cap ( 0, \infty ) \bigr).
\]

\subsection{Closure under pointwise limits}

Since $\cM_{1+}$ is closed under pointwise limits, 
so is its class of entrywise preservers.

Suppose $f_n$ has the form (\ref{uniform}) for any $n \in \N$, with
parameters $( \alpha_n, \beta_n, \nu_n )$, and suppose that $\bigl( f_n( t ) \bigr)$
is convergent, with limit $f( t )$, for all $t \ge 1$.

It is immediate that $f( 1 ) = 1$. Since $( \alpha_n )$ is bounded, we may pass to a subsequence and assume that $\alpha_n \to \alpha \in [ 0, 1 ]$. Furthermore, we have that
\[
f_n( 2 ) \ge 1 + \beta_n \frac{2^{\alpha_n} - 1}{\alpha_n} \ge 1 + \beta_n \log 2 \quad \implies \quad %
\frac{f_n( 2 ) - 1}{\log 2} \geq \beta_n \geq 0,
\]
so we may pass to a further subsequence and assume that
$\beta_n \to \beta \in \R_+$.

For the sequence of measures $( \nu_n )$, note first
that setting
\[
\pi_n : A \mapsto \int_{A \cap ( 0, \infty )} ( 1 - e^{-s} ) \std\nu_n( s )
\]
defines a finite measure on $[ 0, \infty ]$, since
\[
\frac{s}{e} \le 1 - e^{-s} \le s \quad \bigl( s \in [ 0, 1 ] \bigr) %
\quad \text{and} \quad %
1 - e^{-1} \le 1 - e^{-s} \le 1 \quad \bigl( s \in [ 1, \infty ) \bigr).
\]
[This also shows that $\nu_n \in \cR_2$ if and only if $\pi_n$ is finite.]
Furthermore, we have that
\[
\pi_n\bigl( [ 0, \infty ] \bigr) = f_n( e ) - 1 - \beta_n \frac{e^{\alpha_n} - 1}{\alpha_n}
\]
and so the sequence $( \pi_n )$ is bounded in $C[ 0, \infty ]^*$. Passing
to a subsequence, there exists a finite positive measure $\pi$ on $[ 0, \infty ]$
such that $\int_{[ 0, \infty ]} g \std\pi_n \to \int_{[ 0, \infty]} g \std \pi$ for any
continuous function $g : [ 0, \infty ] \to \R$.

In particular, for any $t \in ( 1, \infty )$ we may take
\[
\chi_0 : [ 0, \infty ] \to \R; \ s \mapsto \left\{\begin{array}{cl}
 \log t & \text{if } s = 0, \\[1ex]
\displaystyle \frac{1 - t^{-s}}{1 - e^{-s}} & \text{if } 0 < s < \infty, \\[2ex]
 1 & \text{if } s = \infty.
\end{array}\right.
\]
Then
\begin{align*}
f_n( t ) - 1 - \beta_n \frac{t^{\alpha_n} - 1}{\alpha_n} & = %
\int_{( 0, \infty )} ( 1 - t^{-s} ) \std\nu_n( s ) \\[1ex]
 & = \int_{[ 0, \infty ]} \chi_0( s ) \std\pi_n( s ) \\[1ex]
 & \to \int_{[ 0, \infty ]} \chi_0( s ) \std\pi( s )
\end{align*}
and therefore
\[
f( t ) = 1 + \beta \frac{t^\alpha - 1}{\alpha} + %
\pi\bigl( \{ 0 \} \bigr) \log t + \int_{( 0, \infty )} ( 1 - t^{-s} ) \std\nu( s ) + %
\pi\bigl( \{ \infty \} \bigr) \qquad ( t > 1 ),
\]
where $\nu \in \cR_2$ is such that
$\std\nu( s ) = ( 1 - e^{-s} )^{-1} \std\pi( s )$.

We now claim that if $\alpha > 0$ then $\pi$ has no mass at the origin
and $\nu$ is finite, with
\[
\beta \ge \alpha ( 1 + \nu\bigl( ( 0, \infty ) \bigr) + \pi\bigl( \{ \infty \} \bigr) ).
\]
To see this, we may first assume that $\alpha_n > 0$ for all $n$ and then
\[
\nu_n\bigl( ( 0, \infty ) \bigr) \leq \frac{\beta_n}{\alpha_n} - 1 %
\qquad \text{for any } n \in \N,
\]
so each measure $\nu_n$ is finite
and their total masses are bounded above by $M$, say. Now
suppose $\delta > 0$, $\eps > 0$ and the continuous function $\chi_\eps$
is such that
\[
\chi_\eps : [ 0, \infty ] \to [ 0, 1 ]; \ s \mapsto \left\{\begin{array}{cl}
 1 & \text{if } 0 \le s \le \eps, \\[1ex]
 0 & \text{if } s \ge 2 \eps.
\end{array}\right.
\]
Then
\[
\pi\bigl( \{ 0 \} \bigr) \le %
\int_{[ 0, \infty ]} \chi_\eps( s ) \std\pi( s ) = \lim_{n \to \infty} %
\int_{[ 0, \infty ]} \chi_\eps( s ) \std\pi_n( s ) \leq %
\int_{[ 0, \infty ]} \chi_\eps( s ) \std\pi_m( s ) + \delta
\]
for some $m \in \N$ and
\[
\int_{[ 0, \infty]} \chi_\eps( s ) \std\pi_m( s ) = %
\int_{( 0, \infty )} \chi_\eps( s ) ( 1 - e^{-s} ) \std\nu_m( s ) \le ( 1 - e^{-2 \eps} ) M.
\]
As this upper bound may be made arbitrarily small by shrinking $\eps$,
we see that $\pi$ has no mass at the origin, as claimed.

For the inequality, let $\delta > 0$ and $\eps > 0$
and consider the continuous function
\[
\chi_\infty : [ 0, \infty ] \to \R_+; \ s \mapsto \left\{\begin{array}{cl}
 ( 1 - e^{-\eps} )^{-1} s / \eps & \text{if } 0 \le s \le \eps, \\[1ex]
 ( 1 - e^{-s} )^{-1} & \text{if } \eps \le s < \infty, \\[1ex]
 1 & \text{if } s = \infty.
\end{array}\right.
\]
Then
\[
\nu\bigl( [ \eps, \infty ) \bigr) + \pi\bigl( \{ \infty \} \bigr) %
\le \int_{[ 0, \infty ]} \chi_\infty( s ) \std\pi( s ) %
\le \int_{[ 0, \infty ]} \chi_\infty( s ) \std\pi_m( s ) + \delta
\]
for some sufficiently large $m$. Now,
\begin{align*}
\int_{[ 0, \infty ]} \chi_\infty( s ) \std\pi_m( s ) & = %
\int_{( 0, \infty )} \chi_\infty( s ) ( 1 - e^{-s} ) \std\nu_m( s ) \\[1ex]
 & = \int_{( 0, \eps )} \frac{( 1 - e^{-s} ) s}{( 1 - e^{-\eps} ) \eps} \std\nu_m( s ) %
+ \nu_m\bigl( [ \eps, \infty ) ) \le \nu_m\bigl( ( 0, \infty ) \bigr) \le M.
\end{align*}
This shows that $\nu$ is finite and also gives the bound claimed.

It follows that if $f : [ 1, \infty ) \to [ 1, \infty )$ lies in
the pointwise closure $\overline{\cF}$ of $\cF$ then $f( 1 ) = 1$ and
\begin{equation}\label{preserver}
f( t ) = 1 + \gamma + \beta \frac{t^\alpha - 1}{\alpha} + %
\int_{ ( 0, \infty )} ( 1 - t^{-s} ) \std\nu( s ) \qquad ( t > 1 ),
\end{equation}
where $\alpha \in [ 0, 1 ]$, $( \beta, \nu ) \in \R_+ \times \cR_2$
and $\gamma \in \R_+$,
with $\beta \ge \alpha ( 1 + \gamma + \nu\bigl( ( 0, \infty ) \bigr) )$
and this inequality is always satisfied if $\alpha = 0$.

If $\gamma > 0$ then this function is discontinuous at $1$,
but this is the only discontinuity that can arise. To see that every such $f$
can appear, we note that if
$\beta \ge \alpha ( 1 + \gamma + \nu\bigl( ( 0, \infty ) \bigr) )$ and
$f_n \in \cF$ has parameters
$( \alpha, \beta, \nu_n := \nu + \gamma \delta_n )$ then
\[
\beta \ge \alpha ( 1 + \gamma + \nu\bigl( ( 0, \infty ) \bigr)  ) = %
\alpha ( 1 + \nu_n\bigl( ( 0, \infty ) \bigr) ) 
\]
and if $t > 1$ then
\begin{align*}
f_n( t ) & = 1 + \beta \frac{t^\alpha - 1}{\alpha} + %
\int_{( 0, \infty )} ( 1 - t^{-s} ) \std\nu_n( s ) \\[1ex]
 & = 1 + \beta \frac{t^\alpha - 1}{\alpha} + %
 \int_{( 0, \infty )} ( 1 - t^{-s} ) \std\nu( s ) + \gamma ( 1 - t^{-n} ) %
\to f( t ) \quad \text{as } n \to \infty.
\end{align*}

We also have a direct proof that every $f$ of the form (\ref{preserver})
preserves $\cM_{1+}$. Given any $M = [ m_{i j} ]_{i, j = 1}^n \in \cM_{1+}$,
we let $N = [ n_{i j} ]_{i, j =1}^n \in \cS$ be defined by setting
\[
n_{i j} := \left\{\begin{array}{ll}
 1 & \text{if } m_{i j} = 1, \\[1ex] 0 & \text{if } m_{i j} \neq 1.
\end{array}\right.
\]
Since $m_{i i} = 1$ for all $i$, we see that
\[
\begin{vmatrix} 1 & n_{i j} \\[1ex] n_{i j} & 1 \end{vmatrix} = 1 - n_{i j}^2 \ge 0
\]
and
\[
\begin{vmatrix}
 1 & n_{i j} & n_{k i}\\[1ex]
 n_{i j} & 1 & n_{j k} \\[1ex]
 n_{k i} & n_{j k} & 1
\end{vmatrix} = %
1 + 2 n_{i j} n_{j k} n_{k i} - ( n_{i j}^2 + n_{j k}^2 + n_{k i}^2 ) \ge 0;
\]
if none, two or three of $n_{i j}$, $n_{j k}$ and $n_{k i}$ are zero then this is immediate,
and if only one is zero then the corresponding principal submatrix $A$ of $M$
has determinant $-( m_{i j} - 1 )^2$, say, where $m_{i j} > 1$,
contradicting the fact that
$A \in \cM_{1+}$. This shows that $N$ is $3$-PMP
and therefore positive semidefinite \cite[Theorem~2.7]{BGKP19}.

If we define $\tilde{f}$ on $[ 1, \infty )$ by setting $\tilde{f}( 1 ) = 1 + \gamma$
and $\tilde{f}( t ) = f( t )$ for all $t > 1$, then
\[
f[ M ] = \tilde{f}[ M ] - \gamma N,
\]
so it suffices to show that $\tilde{f}[ - ]$ sends $\cM_{1+}$ into $\cM_{1+}^*$.
We now split into two cases.

If $\alpha > 0$ then
\[
\tilde{f}( t ) = C t^\alpha - \int_{\R_+} t^{-s} \std\nu_0( s ) \qquad ( t \ge 1 ), 
\]
where
\[
C = \frac{\beta}{\alpha} \qquad \text{and} \qquad %
\nu_0 : A \mapsto ( C - \nu\bigl( ( 0, \infty ) \bigr) - 1 - \gamma ) \delta_0( A ) %
+ \nu\bigl( A \cap ( 0, \infty ) \bigr).
\]
The argument proceeds exactly as before.

For $\alpha = 0$, again the argument is unchanged: we have that
\[
\tilde{f}[ M ] = ( 1 + \gamma ) \bone \bone^T + B\bigl[ \log [ M ] \bigr]
\]
for some Bernstein function that vanishes at the origin, so $\tilde{f}[ M ] \in \cN$
and again we have that $\tilde{f}[ M ] \in \cM_{1+}^*$, by \cite[Corollary~4.1.5]{BR}.

We note that, for any $f$ satisfying (\ref{preserver}), the inequality
$\beta \ge \alpha ( 1 + \gamma + \nu\bigl( ( 0, \infty ) \bigr)  )$ is equivalent
to requiring the function $t \mapsto t^{-\alpha} f( t )$ to be non-decreasing
on $( 1, \infty )$. If $f$ is of the form (\ref{preserver}) then
\begin{align*}
t & f'( t ) - \alpha f ( t ) \\[1ex]
 & = %
t \Bigl( \beta t^{\alpha - 1} + \int_{( 0, \infty )} s t^{-s - 1} \std\nu( s ) \Bigr) - %
\alpha ( 1 + \gamma ) - \beta ( t^{\alpha} - 1 ) - %
\alpha \int_{( 0, \infty )} ( 1 - t^{-s} ) \std\nu( s ) \\[1ex]
 & = \beta - \alpha ( 1 + \gamma + \nu\bigl( ( 0, \infty ) \bigr)  ) + %
\int_{( 0, \infty )} ( s + \alpha ) t^{-s} \std\nu( s ) \ge 0 \qquad \text{for any } t > 1
\end{align*}
if and only if $\beta \ge \alpha ( 1 + \gamma + \nu\bigl( ( 0, \infty ) \bigr)  )$,
since $\int_{( 0, \infty )} ( s + \alpha ) t^{-s}\ \std\nu( s ) \to 0$ as $t \to \infty$,
by the monotone convergence theorem. [To see that the function
$s \mapsto ( s + \alpha ) t^{-s}$ is $\nu$-integrable for any $t > 1$,
we note that this function is bounded on $( 0, \infty )$ and has a limit at the origin,
since
\[
\lim_{s \to {0+}} ( s + \alpha ) t^{-s} = \alpha \qquad \text{and} \qquad %
\lim_{s \to \infty} ( s + \alpha ) t^{-s} = 0.
\]
It remains to consider integrability on $( 0, 1 )$ if $\nu$ is infinite,
but then $\alpha = 0$ and
\[
0 \le ( s + \alpha ) t^{-s} \le s \qquad \text{for any } s > 0.]
\]

\subsection{Positive-semidefinite kernels}\label{psdkernels}

Consider a kernel on $\cL$ of the form
\begin{equation}\label{psdkernel}
G : \cL \times \cL \to \R_+; \ ( x, y ) \mapsto g\bigl( d( x, y ) \bigr),
\end{equation}
where $g : \R_+ \to \R_+$ is continuous. We may write
\[
G( x, y ) = g\bigl( d( x, y ) \bigr) = g\bigl( \arcosh [ x, y ] \bigr) = h\bigl( [ x, y ] \bigr),
\]
where $h := g \comp \arcosh : [ 1, \infty ) \to \R_+$.

The kernel $G$ is \emph{positive semidefinite} if and only if the matrix
\[
[ G\bigl( x( i ), x( j ) \bigr) ]_{i, j = 1}^n \in \cS_+
\]
for any finite set $\{ x( 1 ), \ldots, x( n ) \} \subseteq \cL$.

If $f : [ 1, \infty ) \to [ 1, \infty )$ is a distance preserver and $\tilde{f}$ is its
continuous regularisation then setting $g = r \comp \tilde{f} \comp \cosh$
makes
\begin{equation}\label{kernel:G}
G : \cL \times \cL \to ( 0, \infty ); \ ( x, y ) \mapsto \frac{1}{\tilde{f}\big( [ x, y ] \bigr)}
\end{equation}
positive semidefinite; recall that $\tilde{f}[ - ]$ maps $\cM_{1+}$ to $\cM_{1+}^*$
and $r[ - ]$ maps $\cM_{1+}^*$ into~$\cS_+$.
This kernel is also \emph{infinitely divisible}: indeed, 
$f_1 \comp r \comp \tilde{f} = r \comp f_1 \comp \tilde{f}$, where~$f_1$ is as in Section~\ref{examples}. 

The kernel $G$ is, in fact, a complete Nevanlinna--Pick kernel.
By the Quiggin--McCullough criterion
\cite[Corollary 1.12]{AglerMcCarthy}, a positive-semidefinite
kernel $K$ that takes positive values is a complete Nevanlinna--Pick kernel if and only if, for
every finite collection of distinct points $x(1)$, \ldots, $x(n)$, the matrix
\[
\Bigl[ \frac{1}{K\bigl( x( i ), x( j ) \bigr)} \Bigr]_{i, j = 1}^n
\]
has exactly one positive eigenvalue.
For $G$ this matrix is the image of a Lorentz--Gram matrix under the action of the preserver $\tilde{f}$,
so~$G$ is a complete Nevanlinna--Pick kernel. This reasoning extends to provide the following result.

\begin{proposition}
Let $f : [ 1, \infty ) \to [ 1, \infty )$ be a distance preserver and let
\[
K : X \times X \to ( 0, \infty )
\]
be a complete Nevanlinna--Pick kernel such that $K( x, x ) \le 1$ for every $x \in X$.
Then
\[
K_f : X \times X \to ( 0, \infty ); \ ( x, y ) \to f\bigl( K( x, y )^{-1} \bigr)^{-1}
\]
is also a complete Nevanlinna--Pick kernel.
\end{proposition}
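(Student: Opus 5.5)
The plan is to verify the Quiggin--McCullough criterion \cite[Corollary~1.12]{AglerMcCarthy} for $K_f$. We need two things: that $K_f$ is a positive-semidefinite kernel with positive values, and that for distinct points $x(1)$, \ldots, $x(n) \in X$ the matrix $\bigl[ K_f( x(i), x(j) )^{-1} \bigr] = f[ M ]$ has exactly one positive eigenvalue, where $M := \bigl[ K( x(i), x(j) )^{-1} \bigr]_{i, j = 1}^n$.

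\textbf{Step 1: $M \in \cM_{1+}^*$.} Since $K$ is positive semidefinite, each $2 \times 2$ principal submatrix gives
\[
K( x, y )^2 \le K( x, x ) K( y, y ) \le 1 .
\]
Because $K$ takes positive values, this yields $K( x, y )^{-1} \ge 1$ for all $x, y$. Hence $M$ is a symmetric matrix with entries in $[ 1, \infty )$, so $M \in \cM^*$; its diagonal need not be constant, which is why we work in $\cM^*$. Applying the Quiggin--McCullough criterion to $K$ itself, $M$ has exactly one positive eigenvalue. Therefore $M \in \cM_{1+}^*$.

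\textbf{Step 2: apply $f$.} Since $f$ is a distance preserver, $f[ - ]$ preserves $\cM_{1+}$. By Theorem~\ref{bigtestmatrix} (equivalently Theorem~\ref{preservers}(4)), $f[ - ]$ then also preserves $\cM_{1+}^*$, so $f[ M ] \in \cM_{1+}^*$. Using $\cM_{1+}^*$ directly means no continuous regularisation $\tilde f$ is needed, and the possible jump of $f$ at $1$ causes no difficulty. In particular $f[ M ]$ has entries in $[ 1, \infty )$, so $K_f$ takes values in $( 0, 1 ] \subseteq ( 0, \infty )$, and $f[ M ]$ has exactly one positive eigenvalue.

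\textbf{Step 3: positive semidefiniteness of $K_f$.} The matrix $\bigl[ K_f( x(i), x(j) ) \bigr] = f[ M ]^{\circ -1}$ is the Hadamard inverse of a symmetric matrix with positive entries and exactly one positive eigenvalue. By the Bapat--Micchelli result quoted above (the map $r[ - ]$ sends $\cM_{1+}^*$ into $\cS_+$), this matrix is positive semidefinite.

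If the chosen points are not distinct, the matrix $\bigl[ K_f( x(i), x(j) ) \bigr]$ is a compression of one built from distinct points by repeating rows and columns, and so is again positive semidefinite. Thus $K_f$ is a positive-semidefinite kernel with positive values whose reciprocal matrices on distinct points have exactly one positive eigenvalue. The Quiggin--McCullough criterion then shows that $K_f$ is a complete Nevanlinna--Pick kernel.

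The main point needing care is Step 1: the hypothesis $K( x, x ) \le 1$ is exactly what places $M$ in the domain $\cM_{1+}^*$ of the preserver. Without it, entries of $M$ could fall below $1$ and $f$ would not even be defined there.
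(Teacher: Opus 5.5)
Your proposal is correct and follows essentially the same route as the paper: Cauchy--Schwarz together with $K(x,x) \le 1$ puts $M$ in $\cM_{1+}^*$ via the Quiggin--McCullough criterion, and the preserver property (through Theorem~\ref{bigtestmatrix}) gives $f[M] \in \cM_{1+}^*$. The Bapat--Micchelli result then makes $(r \comp f)[M]$ positive semidefinite, and a second application of the criterion finishes the proof; your explicit handling of repeated points, which the criterion does not cover, is a small piece of care the paper leaves implicit.
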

\begin{proof}
We note first that, as $K$ is positive semidefinite,
\[
0 < K( x, y )^2 \le K( x, x ) K( y, y ) \le 1 \qquad ( x, y \in  X )
\]
and so $K_f$ is well defined. Next, given $x(1)$, \ldots, $x(n) \in X$, the Quiggin--McCullough criterion
\cite[Corollary 1.12]{AglerMcCarthy} gives that
\[
M := \bigl[ K( x( i ), x( j ) \bigr)^{-1} \bigr]_{i, j = 1}^n \in \cM_{1+}^*.
\]
Since $f$ is a distance preserver, we have that $f[ M ] \in \cM_{1+}^*$.
The reciprocal map $r[ - ]$ sends $\cM_{1+}^*$ into $\cS_+$, and
therefore
\[
\bigl[ K_f\bigl( x( i ), x( j ) \bigr) \bigr]_{i, j = 1}^n = ( r \comp f )[ M ] \in \cS_+.
\]
Thus $K_f$ is positive semidefinite and its reciprocal matrix is $f[ M ]$,
which has exactly one positive eigenvalue. Another application of the Quiggin--McCullough criterion
gives the result.
\end{proof}

Krein \cite[(4.14)]{Krein-kernels} states that
a kernel $G$ of the form (\ref{psdkernel}) is positive semidefinite if and only if
\begin{equation}\label{psd:rep}
h : [ 1, \infty ) \to \R_+; \ t \mapsto \int_{\R_+} t^{-s} \std \mu( s ) = %
\int_{\R_+} e^{-s \log t} \std \mu( s )
\end{equation}
for some finite positive Radon measure $\mu$ on $\R_+$
and this representation is unique. A proof of the existence part of this claim
was given by Faraut and Harzallah \cite[Th\'eor\`eme~8.1]{FH};
uniqueness follows from the uniqueness of the Laplace transform.

By Bernstein's theorem, this integral-representation theorem shows that
the kernel~$G$ is positive semidefinite if and only if the function
$x \mapsto h( e^x )$ is completely monotone.
Then $x \mapsto h( e^{-x} )$ is absolutely monotone on $( -\infty, 0 )$,
so is real analytic there. It follows that $h$ is real analytic on $( 1, \infty )$
and $g$ is real analytic on $( 0, \infty )$, since the functions
$\log$, $x \mapsto -x$ and $\cosh$ are real analytic also.

In particular, if $f$ is a distance preserver then
$x \mapsto 1 / \tilde{f}( \cosh x ) = 1 / f( \cosh x )$
is real analytic on~$( 0, \infty )$ and so $f$ is real analytic on $( 1, \infty )$.

\begin{corollary}\label{minuspsd}
If $f : [ 1, \infty ) \to [ 1, \infty )$ is a distance preserver and
$h : [ 1, \infty ) \to \R_+$ has the representation \textup{(\ref{psd:rep})}
for some finite positive Radon measure $\mu$ on $\R_+$
then
\[
k : [ 1, \infty ) \to [ 1, \infty ); \ t \mapsto a f( t ) - h( t )
\]
is also a distance preserver, where $a := 1 + \mu( \R_+ )$.
\end{corollary}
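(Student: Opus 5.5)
We have to show that $k(1)=1$, that $k^{-1}(\{1\})=\{1\}$, and that $k[-]$ sends $\cM_{1+}$ into $\cM_{1+}$.

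The boundary value is immediate: $k(1) = a f(1) - h(1) = a - \mu(\R_+) = 1$.

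For the matrix statement, take $M \in \cM_{1+}$. Since $f$ is a distance preserver, $f[M] \in \cM_{1+}$, so Theorem~\ref{Krein-Gram}(2) lets us write $f[M] = \bv\bv^T - A$ with $\bv \in [1,\infty)^n$ and $A$ positive semidefinite. Then
\[
k[M] = (\sqrt{a}\,\bv)(\sqrt{a}\,\bv)^T - \bigl( aA + h[M] \bigr).
\]
It remains to see that $h[M]$ is positive semidefinite. This is the same argument as in Theorem~\ref{babyadditive}. We have $h[M] = \int_{\R_+} M^{\circ -s} \std\mu(s)$, and by the Bapat--Micchelli result $M^{\circ -s} = (M^{\circ -1})^{\circ s}$ is positive semidefinite for every $s \ge 0$; integrating against the positive measure $\mu$ preserves this. (Alternatively, $h[M]$ is a Gram-type matrix of the positive-semidefinite kernel $G$ from (\ref{psdkernel}), by Krein's representation, realising $M$ via points of $\cL$ using Theorem~\ref{Krein-Gram}(5).)

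Hence $k[M] = \bu\bu^T - A'$ with $A'$ positive semidefinite. By Lemma~\ref{Loewner}, $k[M]$ has at most one positive eigenvalue.

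To conclude $k[M] \in \cM_{1+}$ we also need the following.
\begin{itemize}
\item[(i)] The entries of $k[M]$ lie in $[1,\infty)$ and its diagonal entries equal $1$. The diagonal is clear from $k(1)=1$.
\item[(ii)] $k[M]$ has a positive eigenvalue. This follows since its trace is $n > 0$.
\end{itemize}

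The main point is therefore to check that $k(t) \ge 1$, and indeed $k(t) > 1$ for $t > 1$; this is also needed for $k^{-1}(\{1\}) = \{1\}$. Two monotonicity facts combine here. First, $f$ is non-decreasing by Corollary~\ref{cty}. Second, $h(t) = \int t^{-s}\std\mu(s)$ is non-increasing in $t$, with $h(t) \le h(1) = \mu(\R_+)$. So for $t > 1$,
\[
k(t) = a f(t) - h(t) \ge a f(t) - \mu(\R_+) = 1 + a\bigl( f(t) - 1 \bigr) > 1,
\]
since $f(t) > 1$ when $t > 1$, $f$ being a distance preserver. This shows that $k$ maps $[1,\infty)$ into $[1,\infty)$ and that $k$ takes the value $1$ only at $t = 1$.

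If one works with the convention that only $f[\cM_{1+}] \subseteq \cM_{1+}$ is required, the same computation applies verbatim.
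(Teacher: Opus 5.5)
Your proposal is correct and follows essentially the same route as the paper. The bound $h(t) \le h(1) = \mu(\R_+)$ gives $k \ge 1$ with equality only at $t = 1$, and the decomposition $k[M] = (\sqrt{a}\,\bv)(\sqrt{a}\,\bv)^T - (aA + h[M])$, with $h[M]$ positive semidefinite, finishes the argument. The differences are cosmetic: the paper works with points of $\cL$ and the positive-semidefinite kernel $G$, whereas you use matrices in $\cM_{1+}$ directly and cite Bapat--Micchelli; also, the monotonicity of $f$ from Corollary~\ref{cty} is not actually needed, since $f(t) > 1$ for $t > 1$ suffices.
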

\begin{proof}
Note first that $h( t ) \leq \mu( \R_+ )$ for any $t \geq 1$
and so $k( t ) \geq a - \mu( \R_+ ) = 1$.
We also have that $k( 1 ) = a - h( 1 ) = a - \mu( \R_+ ) = 1$
and if $t > 1$ then $f( t ) > 1$ and $h( t ) \leq h( 1 )$,
so $k( t ) > a - h( 1 ) = 1$. Hence $k^{-1}\bigl( \{ 1 \} \bigr) = \{ 1 \}$.

Furthermore, if $\{ x( 1 ), \ldots, x( n ) \} \subseteq \cL$ then,
by Theorems~\ref{preservers} and~\ref{Krein-Gram},
\begin{align*}
[ k\bigl( [ x( i ), x( j ) ] \bigr) ] & = %
a \bigl[ f[ \bigl( [ x( i ), x( j ) ] \bigr) ] \bigr] - [ h\bigl( [ x( i ), x( j ) ] \bigr) ] \\[1ex]
 & = a \bv \bv^T - a A - [ G\bigl( x( i ), x( j ) \bigr) ] \\[1ex]
 & = \bu \bu^T - B,
\end{align*}
where $\bu := \sqrt{a} \bv$ and $B \in \cS_+$.
The final remark before Section~\ref{examples} gives the claim.
\end{proof}

We note that our class of preservers in $\overline{\cF}$ is closed under this operation:
if $f$ has the form (\ref{preserver}), so that
\[
f( t ) = 1 + \gamma 1_{( 1, \infty )}( t ) + %
\beta \frac{t^\alpha - 1}{\alpha} + \int_{( 0, \infty )} ( 1 - t^{-s} ) \std\nu( s )
\qquad ( t \ge 1 )
\]
with $\beta \ge \alpha ( 1 + \gamma + \nu\bigl( ( 0, \infty ) \bigr) )$, and
\[
h( t ) = \int_{\R_+} t^{-s} \std\mu( s ) = %
-\int_{( 0, \infty )} ( 1 - t^{-s} ) \std\mu( s ) + \mu( \R_+ ) %
\qquad ( t \ge 1 )
\]
for some finite Radon measure $\mu$ on $\R_+$, with $a := 1 + \mu( \R_+ )$,
then
\[
a f( t ) - h( t ) = 1 + a \gamma 1_{( 1, \infty )}( t ) + %
a \beta \frac{t^\alpha - 1}{\alpha} + \int_{( 0, \infty )} ( 1 - t^{-s} ) \std\pi( s )
\qquad ( t \ge 1 ),
\]
where $\pi := a \nu + \mu|_{( 0, \infty )}$, and
\begin{align*}
a \beta & \ge \alpha ( a + a \gamma + a \nu\bigl( ( 0, \infty ) \bigr) ) \\[1ex]
 & = \alpha ( 1 + a \gamma + a \nu\bigl( ( 0, \infty ) \bigr) + \mu( \R_+ ) ) %
\ge \alpha ( 1 + a \gamma + \pi\bigl( ( 0, \infty ) \bigr) ),
\end{align*}
as required.

In \cite[Section~13]{CL}, Cohen and Lifshits ask whether
 the fractional Ornstein--Uhlenbeck random field with covariance
\begin{equation}\label{kernel:OU}
G : ( x, y ) \mapsto \exp\bigl( {-a} \, d( x, y )^\alpha \bigr)
\end{equation}
exists on finite-dimensional real hyperbolic space $\cL_n$ for some $a > 0$ and $\alpha > 1$.
We show that this is not possible.

\begin{proposition}\label{prp:candl}
Given any $a > 0$ and $\alpha > 1$, there exists an integer $n( a, \alpha )$ such that the fractional
Ornstein--Uhlenbeck field with covariance $G$ as in \tup{(}\ref{kernel:OU}\tup{)}
does not exist in dimension $n$ for any $n \geq n( a, \alpha )$.
\end{proposition}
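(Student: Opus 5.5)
We argue by contradiction and use Krein's integral representation (\ref{psd:rep}). Suppose that, for some $a > 0$ and $\alpha > 1$, the kernel $G$ in (\ref{kernel:OU}) is positive semidefinite on $\cL_n$ for infinitely many $n$. Since $\cL_n \subseteq \cL_{n+1}$ under the obvious inclusion, and positive semidefiniteness passes to subsets, $G$ is then positive semidefinite on $\cL_n$ for \emph{every} $n$.

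We next show that $G$ is positive semidefinite on all of $\cL$. Take any $x( 1 ), \ldots, x( N ) \in \cL$ and let $M := \bigl[ [ x( i ), x( j ) ] \bigr] \in \cM_{1+}$. The value $G\bigl( x( i ), x( j ) \bigr) = h( m_{i j} )$ depends only on $M$, where $h( t ) := \exp\bigl( -a ( \arcosh t )^\alpha \bigr)$. The proof of Theorem~\ref{Krein-Gram}, (3) $\Rightarrow$ (5), realises $M$ by points $( v_i, \bb_i^T, 0, 0, \ldots )$ with $\bb_i \in \R^N$. These points lie in $\cL_{N+1}$, so $G$ applied to them gives a positive semidefinite matrix, and hence so does $G$ applied to the original points. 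Thus $G$ is a continuous positive-semidefinite kernel of the form (\ref{psdkernel}) on $\cL$. By Krein's representation (\ref{psd:rep}), proved by Faraut and Harzallah,
\[
h( t ) = \int_{\R_+} t^{-s} \std\mu( s )
\]
for a finite positive measure $\mu$. Hence $H( x ) := h( e^x ) = \int_{\R_+} e^{-s x} \std\mu( s )$ is a Laplace transform.

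A Laplace transform of a positive measure is log-convex, by the Cauchy--Schwarz (or H\"older) inequality applied to $e^{-s x / 2} e^{-s y / 2}$. So $\log H( x ) = -a \, \phi( x )$, with $\phi( x ) := u( x )^\alpha$ and $u( x ) := \arcosh e^x$, must be convex on $( 0, \infty )$. Equivalently, $\phi$ must be concave there.

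The key computation shows that $\phi$ is strictly convex for large $x$. We have
\[
u'( x ) = \frac{e^x}{\sqrt{e^{2 x} - 1}} \to 1 \qquad \text{and} \qquad u''( x ) = -\frac{e^x}{( e^{2 x} - 1 )^{3 / 2}} = O( e^{-2 x} ),
\]
while $u( x ) = x + \log 2 + o( 1 )$. Differentiating twice,
\[
\phi''( x ) = \alpha ( \alpha - 1 ) u^{\alpha - 2} ( u' )^2 + \alpha u^{\alpha - 1} u''.
\]
The first term is of order $x^{\alpha - 2}$, and it is positive since $\alpha > 1$. The second term is $O( x^{\alpha - 1} e^{-2 x} )$, which is exponentially small. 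Hence $\phi'' > 0$ for all sufficiently large $x$, so $\log H$ is strictly concave on some interval, contradicting log-convexity.

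Therefore $G$ is positive semidefinite on $\cL_n$ for only finitely many $n$, and nestedness of the $\cL_n$ gives a threshold $n( a, \alpha )$ beyond which it fails. The only delicate point is the reduction from all $\cL_n$ to $\cL$; the realisation in Theorem~\ref{Krein-Gram} handles it.
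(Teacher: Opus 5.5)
Your proof is correct and follows the same route as the paper: assume $G$ is positive semidefinite, use Krein's (Faraut--Harzallah) representation to get log-convexity of $x \mapsto h(e^x)$, and contradict this with the eventual strict concavity of $-a(\arcosh e^x)^\alpha$ for $\alpha > 1$. Your explicit second-derivative computation and your passage from every $\cL_n$ to $\cL$ via the realisation in Theorem~\ref{Krein-Gram} make precise two steps that the paper handles only through the asymptotic $\sim$ and a one-line final remark.
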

\begin{proof}
As in (\ref{psdkernel}), we can write $G : ( x, y ) \mapsto h\bigl( [ x, y ] \bigr)$, where here
\[
h : [ 1, \infty ) \to ( 0, \infty ); \ t \mapsto \exp( {-a} ( \arcosh t )^\alpha ).
\]
We suppose for contradiction that $G$ is positive semidefinite.
Given this, it follows from (\ref{psd:rep}) that
\[
F : \R_+ \to ( 0, \infty ); \ x \mapsto h( e^x )
\]
is completely monotone, so $\log \comp F$ is convex \cite[Lemma 4.3]{Merkle}. However,
from (\ref{arcosh}) we have that
\[
\log F( x ) = {-a} \bigl( \arcosh( e^x )\bigr)^\alpha \sim {-a} ( x +\log 2 )^\alpha %
\qquad \text{as } x \to \infty, 
\]
and $x\mapsto {-a} ( x + \log 2 )^\alpha$ is concave. It follows that $G$ is not positive semidefinite.
Thus, for sufficiently large $n$  there exist $x(1)$, \ldots, $x(n) \in \cL$ such that
$\bigl[ G\bigl( x( i ), x( j ) \bigr) \bigr]_{{i, j}=1}^n$ is not positive semidefinite. 
\end{proof}

We conclude this section by noting that a theory of invariant infinitely divisible positive-definite kernels
on symmetric spaces was developed by Gangolli, who obtained a generalised L\'evy--Khintchine representation:
see \cite[Theorem 3.31]{Gangolli}, noting that $\Phi$ in~(3.112) should be $\Psi$.

\subsection{Uniqueness of the additive representation}

If the map $f : [ 1, \infty ) \to [ 1, \infty )$ has the representation (\ref{preserver})
and $f( 1 ) = 1$ then
\[
\gamma = \lim_{\delta \to {0+}} f( 1 + \delta ) - 1, \qquad
\alpha = \lim_{t \to \infty} \frac{\log f( t )}{\log t} \qquad \text{and} \qquad %
\beta = \lim_{t \to \infty} t^{1 - \alpha} f'( t ).
\]
Given these parameters, the uniqueness of the measure $\nu$ follows from
standard Laplace-transform arguments.

The first claim is immediate. For the second, we let
$F( x ) := \log f( e^x )$ for all $x > 0$ and note that, if $\alpha = 0$,
\[
0 \le \frac{F(x)}{x} = %
\frac{\log x}{x} + \frac{1}{x} \log\Bigl( x^{-1} f( {1+} ) + \beta + %
\int_{( 0, \infty )} \frac{1 - e^{-x s}}{x} \std\nu( s ) \Bigr)
\]
and, if $\alpha \in ( 0, 1 ]$,
\[
\frac{F(x)}{x} = %
\alpha + \frac{1}{x} \log\Bigl( e^{-\alpha x} f( {1+} ) + %
\beta \alpha^{-1} ( 1 - e^{-\alpha x} ) + %
x e^{-\alpha x} \int_{( 0, \infty )} \frac{1 - e^{-x s}}{x} \std\nu( s ) \Bigr).
\]
For the third, we have that
\[
t^{1 - \alpha} f'( t ) = %
\beta + t^{-\alpha} \int_{( 0, \infty )} s t^{-s} \std\nu( s ) \to \beta %
\qquad \text{as } t \to \infty,
\]
following the same manner of working as at the start of the proof of Theorem~\ref{babyadditive}.

We also note that $f$ is of finite type if $\alpha > 0$, since
\[
C := \lim_{t \to \infty} t^{-\alpha} f( t ) = \frac{\beta}{\alpha}
\]
in this case, and if $\alpha = 0$ then $f$ has finite type
$C = 1 + \gamma + \nu\bigl( ( 0, \infty ) \bigr)$ if and only
if~$\beta = 0$ and $\nu$ is finite, since in this case
\[
f( t ) = %
1 + \gamma + \beta \log t + \int_{( 0, \infty )} ( 1 - t^{-s} ) \std\nu( s ) %
\qquad ( t > 1 ).
\]
 
\section{Screw lines}

A continuous function $\ell : \R \to \cL$ is a \emph{screw line}
if there exists a function $\lambda : \R \to \R$,
called the \emph{metric function} for the screw line, such that
\[
d\bigl( \ell( x ), \ell( y ) \bigr) = \lambda( x - y ) \qquad \text{for any $x$ and } y \in \R;
\]
equivalently, we have that
\[
[ \ell( x ), \ell( y ) ] = \cosh \lambda( x - y ) \qquad \text{for any $x$ and } y \in \R.
\]
Note that a metric function is necessarily zero at the origin, even and continuous.

The simplest non-trivial example of a screw line is
\[
\ell_0 : \R \to \cL; \ x \mapsto ( \cosh x, \sinh x, 0, 0, \ldots )
\]
since $[ \ell_0( x ), \ell_0( y ) ] = \cosh( x - y )$ and the
corresponding metric function $\lambda_0$ is the identity map.

Krein classified the possible metric functions that may arise in this context: see \cite{Krein} and \cite[Sections~26 and~27]{IK-2}.
The function $\lambda$ arises as the distance function of a screw line $\ell$
if and only if $\Lambda := \cosh \comp \lambda$ has exactly one of three possible
representations:
\begin{alignat*}{2}
& x \mapsto a - \int_{( 0, \infty )} \cos( x s ) \std \sigma( s ) & & %
(\textit{elliptic}); \\[1ex]
& x \mapsto a \cosh( b x ) - \int_{\R_+} \cos( x s ) \std \sigma( s ) & \quad & %
(\textit{hyperbolic}); \\[1ex]
& x \mapsto 1 + \int_{\R_+} \frac{1 - \cos( x s )}{s^2} \std \mu( s ) && (\textit{parabolic}).
\end{alignat*}
The constants $a \in [ 1, \infty )$ and $b \in ( 0, \infty )$,
the positive Radon measure $\sigma$ is supported on $\R_+$,
has total mass $a  - 1$ and no mass at $0$ in the elliptic case,
and the positive Radon measure $\mu$ is supported on $\R_+$ and is such that
\begin{equation}\label{mu}
\int_{( 0, 1 ]} \frac{\rd \mu( s )}{s^2} = \infty \quad \textrm{and} \quad %
\int_{[ 1, \infty )} \frac{\rd \mu( s )}{s^2} < \infty.
\end{equation}

In the elliptic case, the function $\Lambda$ is bounded and therefore so is $\lambda$.
In the hyperbolic case, we have that
\[
\lim_{x \to \infty} \frac{\Lambda( x )}{\cosh(  b x )} = a \qquad %
\textrm{and so} \qquad \lim_{x \to \infty} \frac{\lambda( x )}{x} = b,
\]
since
\[
\frac{\cosh \lambda( x )}{\cosh( b  x )} \sim \exp( \lambda( x ) - b x ) %
\qquad \textrm{as } x \to \infty.
\]
In the parabolic case, we may take any $\delta > 0$ and write
\begin{align*}
x^{-2} \Bigl( \Lambda( x ) - 1 - \frac{x^2}{2} \mu\bigl( \{ 0 \} \bigr) \Bigr) & = %
\int_{( 0, \delta ) \cup [ \delta, 1 ] \cup ( 1, \infty )} %
\frac{1 - \cos( x s )}{x^2 s^2} \std \mu( s ) \\[1ex]
 & \le \mu\bigl( ( 0, \delta ) \bigr) + %
\frac{2 \mu\bigl( [ \delta, 1 ] \bigr)}{x^2 \delta^2} + %
\frac{2}{x^2} \int_{[ 1, \infty )} \frac{\rd\mu( s )}{s^2}.
\end{align*}
Hence $2 x^{-2} \Lambda( x ) \to \mu\bigl( \{ 0 \} \bigr)$ as $x \to \infty$.
The conditions (\ref{mu}) on $\mu$ imply that $\Lambda$
and so~$\lambda$ is unbounded (see Proposition~\ref{vNS})
and then this working shows that
\[
\lambda( x ) - 2 \log x \to \log \mu\bigl( \{ 0 \} \bigr) \in [ -\infty, \infty ) %
\qquad \text{as } x \to \infty.
\]

\subsection{Consequences for distance preservers}

Suppose $f : [ 1, \infty ) \to [ 1, \infty )$ is a continuous distance preserver
with corresponding metric preserver $\varphi$
and embedding map $\Phi$ from Theorem~\ref{preservers}.

If $\ell$ is a screw line then so is $\Phi \comp \ell$,
with metric function $\varphi \comp \lambda$, since
\[
d( \Phi\bigl( \ell( s ) \bigr), \Phi\bigl( \ell( t ) \bigr) ) = %
\varphi( d\bigl( \ell( s ), \ell( t ) \bigr) ) = \varphi\bigl( \lambda( s - t ) \bigr) %
\qquad \text{ for any $s$ and } t,
\]
and
$\cosh \comp \varphi \comp \lambda = f \comp \cosh \comp \lambda = %
f \comp \Lambda$.

In particular, the screw line $\Phi \comp \ell_0$
has metric function $\varphi \comp \lambda_0 = \varphi$ and
therefore the function $\cosh \comp \varphi = f \comp \cosh$
must have one of the three forms above. Which form appears is controlled
by the characteristics
\[
\alpha := \lim_{t \to \infty} \frac{\log f( t )}{\log t} \in [ 0, 1 ] %
\qquad \text{and} \qquad %
C := \lim_{t \to \infty} t^{-\alpha} f( t ) \in [ 1, \infty ].
\]
If $\alpha = 0$ and $C < \infty$ then $f$ and $\Lambda$ are bounded,
so $\Lambda$ is elliptic. If $\alpha = 0$ and $C = \infty$ then $f$ and $\Lambda$
are unbounded, so $\Lambda$ cannot be elliptic. Nor can $\Lambda$ be hyperbolic,
since
\[
\lim_{x \to \infty} \frac{\Lambda( x )}{\cosh( b x )} = %
\lim_{x \to \infty} 2^{1 - b}\frac{f( \cosh x )}{( \cosh x )^b} = 0 %
\qquad \textrm{for any } b > 0;
\]
this holds because
\[
\lim_{x \to \infty} \frac{( \cosh x )^b}{\cosh( b x )} = 2^{1 - b} %
\quad \text{and} \quad \frac{\log f( t )}{\log t} \le \frac{b}{2} \iff t^{-b} f( t ) \le t^{-b / 2}.
\]
Hence $\Lambda$ is parabolic in this case. Finally, if $\alpha > 0$ then
\[
\frac{\Lambda( x )}{\cosh( \alpha x )} \sim %
2^{1 - \alpha} ( \cosh x )^{-\alpha} f( \cosh x )\to C 2^{1 - \alpha} %
\quad \text{as } x \to \infty,
\]
so $\Lambda$ is hyperbolic, with $a = C 2^{1 - \alpha}$ and $b = \alpha$.
In particular, we see that $C$ must be finite if $f$ is continuous and $\alpha > 0$.
We summarise this working in the following table.
\[
\begin{array}{lll}
\alpha = 0, \ C < \infty & \text{elliptic} & \\[1ex]
\alpha = 0, \ C = \infty & \text{parabolic} & \\[1ex]
\alpha > 0, \ C < \infty & \text{hyperbolic} & a = C 2^{1 - \alpha}, \ b = \alpha \\[1ex]
\alpha > 0, \ C = \infty & \text{impossible}
\end{array}
\]
We note also that if $\alpha = 0$ and $C = \infty$
then the representing measure $\mu$ can have no mass at $0$.

To see this, we note that
\begin{equation}\label{arcosh}
\arcosh e^y = \log\bigl( e^y + \sqrt{e^{2 y} - 1} \bigr) \sim %
\log( 2 e^y ) = y + \log 2 \qquad \text{as } y \to \infty,
\end{equation}
so setting $y = \log \cosh x$ gives that
\begin{equation}\label{asymp}
2 y^{-2} f( e^y ) \sim 2 ( \arcosh e^y )^{-2} f( e^y )  = %
2 x^{-2} \Lambda( x ) \to m_0 := \mu\bigl( \{ 0 \} \bigr) %
\qquad \text{as } y \to \infty.
\end{equation}
We let
$\bv := ( e^y, e^{2 y}, e^{3 y} )^T \in ( 1, \infty )^3$
for $y > 0$. Then $\bv \bv^T \in \cM_{1+}^*$ and therefore
\[
f[ \bv \bv^T ] = \bigl[ f( e^{( i + j ) y} ) \bigr]_{i, j = 1}^3 \in \cM_{1+}^*.
\]
If $m_0 > 0$, it follows that
\[
\frac{2}{m_0 y^2} f[ \bv \bv^T ] \to %
\begin{bmatrix} 4 & 9 & 16 \\[1ex] 9 & 16 & 25 \\[1ex] 16 & 25 & 36 \end{bmatrix} %
\qquad \text{as } y \to \infty.
\]
As this matrix has determinant $-8$, it cannot lie in $\cM_{1+}^*$.
Hence we must have $m_0 = 0$.

\subsection{Examples of screw-line representations}

If $f_1( t ) \equiv t^a$ for some $a \in ( 0, 1 ]$ then
$\Lambda_1 := f_1 \comp \cosh$ is hyperbolic and
\[
( \cosh x )^a = %
2^{1 - a} \cosh( a x ) - \int_{\R} \cos( x s ) \std \sigma_a( s )
\]
for some finite positive Radon measure $\sigma_a$ with total mass
$\sigma_a\bigl( ( 0, \infty ) \bigr) = 2^{1 - a} - 1$. 
Re-arranging, we can write
\[
h( x ) := 2^{1 - a} \cosh( a x ) - ( \cosh x )^a = %
\int_{\R_+} \cos( x s ) \std \sigma_a ( s ),
\]
whence
\begin{align*}
h'( x ) & = %
a 2^{1 - a} \sinh( a x ) - a ( \cosh x )^{a - 1} \sinh x \\[1ex]
\text{and} \quad h''( x ) & = a^2 2^{1 - a} \cosh( a x ) - %
a ( a - 1 ) ( \cosh x )^{a - 2} \sinh^2 x - a ( \cosh x )^a \\[1ex]
 & = a^2 2^{1 - a} \cosh( a x ) - %
a ( a - 1 ) ( \cosh x )^{a - 2} ( \cosh^2 x - 1 ) - a ( \cosh x )^a \\[1ex]
 & = a^2 2^{1 - a} \cosh( a x ) - a^2 ( \cosh x )^a + %
 a ( a - 1 ) ( \cosh x )^{a - 2} \\[1ex]
 & = a^2 h( x ) + a ( a - 1 ) ( \cosh x )^{a - 2}.
\end{align*}
If $q$ is the inverse Fourier transform
of the Schwarz function $x \mapsto ( \sech x )^{2 - a}$ (see Proposition~\ref{schwartz}) then
\[
q( y ) = \frac{1}{2 \pi} \int_{\R} e^{-i x y} ( \sech x )^{2 - a} \std x = %
\frac{1}{2^a \pi} \Beta\Bigl( \frac{2 - a + i y}{2}, \frac{2 - a - i y}{2} \Bigr) \qquad ( y \in \R ),
\]
where $\Beta$ denotes the beta function,
by Proposition~\ref{beta}. Setting
\[
p( y ) := \frac{2 a ( 1 - a )}{a^2 + y^2} q( y ) \qquad \text{for any } y \in \R
\]
and
\[
g( x ) := \int_{\R_+} \cos( x s ) p( s ) \std s = %
\frac{1}{2} \int_{\R} e^{i x y} p( y ) \std y \qquad \text{for any } x \in \R,
\]
we have that
\[
a^2 g( x ) - g''( x ) = a ( 1 - a ) \int_{\R} e^{i x y} q( y ) \std y = %
a ( 1 - a ) ( \sech x )^{2 - a}.
\]
Hence $k := g - h$ is such that $a^2 k - k'' \equiv 0$,
and so there exist constants $A_1$ and $A_2$ such that
\[
g( x ) = %
h( x ) + A_1 \cosh( a x ) + A_2 \sinh( a x ) \qquad \textrm{for any } x \in \R.
\]
Now
\[
0 = g'( 0 ) - h'( 0 ) = a A_2 \implies A_2 = 0
\]
and then
\[
A_1 = \sech( a x ) g( x ) - \sech( a x ) h( x ) \to 0 - 2^{1 - a}  + %
2^{1 - a} = 0 \qquad \text{as } x \to \infty,
\]
since $g$ is a Schwartz function. Thus $g = h$, which shows that
$\sigma_a$ is absolutely continuous with respect to Lebesgue measure and
\[
\frac{\rd\sigma_a}{\rd s} = %
\frac{a ( 1 - a )}{2^{a - 1} \pi ( a^2 + s^2 )} %
\Beta\Bigl( \frac{2 - a + i s}{2}, \frac{2 - a - i s}{2} \Bigr)
\qquad \text{for any } s \in ( 0, \infty ).
\]

If $f_2( t ) \equiv 1 + b \log t$, where $b \in ( 0, \infty )$, then
$\alpha = 0$ and $C = \infty$, so $\Lambda_2$ is parabolic and we have that
\[
b \log \cosh x= %
\int_{\R_+} \frac{1 - \cos( x s )}{s^2} \std \mu_b( s ) \qquad ( x \ge 0 ),
\]
where $\mu_b = b \mu_1$ is a measure supported on $( 0, \infty )$.

By Proposition~\ref{sech} and the theorems of Fubini and Tonelli, we see that
\begin{align*}
\tanh t = \int_{[ 0, t ]} \sech^2 u \std u &= %
\int_{[ 0, t ]} \int_{\R_+} \cos( s u ) \, s \cosech( \pi s / 2 ) \std s \std u \\[1ex]
 & = \int_{\R_+} \int_{[ 0, t ]} \cos(  s u ) \std u \, s \cosech( \pi s / 2 ) \std s %
\\[1ex]
 & = \int_{\R_+} \frac{\sin( s t )}{s}  \, s \cosech( \pi s / 2 ) \std s.
\end{align*}
Noting that
\[
s \mapsto \frac{\sin( s t )}{s}  \, s \cosech( \pi s / 2 )
\]
extends to a continuous function on $\R_+$ bounded by
$s \mapsto | t | s \cosech( \pi s / 2 )$, which is integrable by Proposition~\ref{cosech},
we have that
\[
\log \cosh x = %
\int_{[ 0, x ]} \int_{\R+} \!\frac{\sin( u s )}{s}  \, s \cosech( \pi s / 2 ) \std s \std u = %
\int_{\R_+} \!\frac{1 - \cos( x s  )}{s^2} \, s \cosech( \pi s / 2 ) \std s.
\]
This shows that $\mu_b$ is absolutely continuous
with respect to Lebesgue measure and has density
\[
\frac{\rd \mu_b}{\rd s} = b s \cosech( \pi s / 2 ) \qquad %
\text{for any } s \in ( 0, \infty ).
\]

For $f_3( t ) \equiv 1 + p - p t^{-a}$, where $p \in ( 0, \infty )$ and
$a \in ( 0, \infty )$, we see immediately that the function
$\Lambda_3 := f_3 \comp \cosh$ is elliptic,
since $f_3$ is bounded, and Proposition~\ref{beta} gives that
\[
f_3( \cosh x ) = 1 + p - \int_{( 0, \infty )} \cos( x s ) \std\tau_a( s ),
\]
where $\tau_a$ is absolutely continuous with density
\[
\frac{\rd \tau_a}{\rd s} = \frac{2^{a - 1} p}{\pi \Gamma( a )} %
\Bigl| \Gamma\Bigl( \frac{a + i s}{2} \Bigr) \Bigr|^2.
\]

For $f_4( t ) \equiv 1 - q + q t$, where $q \geq 1$, we see immediately that
$\Lambda_4 := f_4 \comp \cosh$ is hyperbolic, with
\[
a = \lim_{x \to \infty} \frac{1 - q + q \cosh x}{\cosh x} = q, \qquad b = 1 %
\qquad \text{and} \qquad \sigma = ( q - 1 ) \delta_0.
\]

\section{Characterisation of distance preservers}

\subsection{The continuous positive-order case}

Our key tool is the following version of a theorem due to Widder
\cite[Theorem~VI.21]{Widder}: if the Hankel kernel
\[
( 0, \infty ) \times ( 0, \infty ) \to \R; \ ( x, y ) \mapsto h( x + y )
\]
is positive semidefinite and $h$ is analytic on $( 0, \infty )$ then there
exists a finite positive Radon measure $\mu$ on $\R$ such that
\[
h( x ) = \int_\R e^{-x s} \std\mu( s ) \qquad ( x > 0 ).
\]
Thus if the continuous distance preserver $f$ 
has characteristics $\alpha \in ( 0, 1 ]$ and  $C \in [ 1, \infty )$
and the kernel
\[
K : ( x, y ) \mapsto C e^{\alpha ( x + y )} - f( e^{x + y} ) \qquad ( x, y > 0 )
\]
is positive semidefinite then $f$ has the representation
\begin{equation}\label{widder}
f( t ) = C t^\alpha - \int_{\R} t^{-s} \std\mu( s ) \qquad ( t > 1 ),
\end{equation}
where $\mu$ is a finite positive Radon measure on $\R$.

If $a$, $b \in \R$ are such that $a < b < -\alpha$ and
$\mu\bigl( ( a, b ) \bigr) > 0$ then $b + \alpha < 0$ and
\[
C - t^{-\alpha} f( t ) = \int_{\R} t^{-( s + \alpha )} \std\mu( s ) \ge %
\int_{( a, b )} t^{-( s + \alpha )} \std\mu( s ) \ge %
\mu\bigl( ( a, b ) \bigr) t^{-( \alpha + b )} \to \infty
\]
as $t \to \infty$, which is a contradiction.
[Recall that $f( t ) = O( t )$ as $t \to \infty$.]
Hence $\mu$ has support in $[ -\alpha, \infty )$.
Moreover, we can write
\[
\int_{[ -\alpha, \infty )} t^{-s} \std\mu( s ) = %
\mu\bigl( \{ -\alpha \} \bigr) t^{\alpha} + %
\int_{( -\alpha, \infty )} t^{-s} \std\mu( s ),
\]
so we may assume that $\mu$ has support in $( -\alpha, \infty )$.

We know that $\Lambda = f \comp \cosh$ is hyperbolic, with $b = \alpha$ and
$a = C 2^{1 - \alpha}$, so there exists a finite positive Radon measure $\sigma$
on $\R_+$ such that
\[
f( \cosh t ) = a \cosh( b t ) - \int_{\R_+} \cos( s t ) \std\sigma( s )
\]
and therefore
\[
I := \int_{( -\alpha, \infty )} ( \cosh t )^{-s} \std\mu( s ) = %
C ( \cosh t )^\alpha - C 2^{1 - \alpha} \cosh( \alpha t ) + %
\int_{\R_+} \cos( s t ) \std\sigma( s )
\]
is bounded for all large $t$. If $\mu\bigl( ( -\alpha, 0 ) \bigr) > 0$ then
$\mu\bigl( ( -\alpha, -\eps ] \bigr) > 0$ for some $\eps \in ( 0, \alpha )$ and
\[
I \ge \mu\bigl( ( -\alpha, -\eps ] \bigr) ( \cosh t )^\eps \to \infty \quad \text{ as } t \to \infty,
\]
which is a contradiction. Hence $\mu$ is supported on $\R_+$ and
the representation (\ref{widder}) holds for $t = 1$, so $f \in \cF_1$.

To see that $K$ is positive semidefinite, let $x_1$, \ldots, $x_n > 0$;
it suffices to show that
\[
\bigl[ C e^{\alpha ( x_i + x_j )} - f( e^{x_i + x_j} ) \bigr]_{i, j = 1}^n = %
D \bigl[ C - e^{-\alpha ( x_i + x_j )} f( e^{x_i + x_j} ) \bigr]_{i, j = 1}^n D
\]
is positive semidefinite,
where $D := \diag( e^{\alpha x_1}, \ldots, e^{\alpha x_n} )$.
We have that
\[
\bigl[ C - e^{-\alpha ( x_i + x_j )} f( e^{x_i + x_j} ) \bigr]_{i, j = 1}^n = %
C \bone_n \bone_n^T - D^{-1} f[ \bv \bv^T ] D^{-1},
\]
where $\bv := ( e^{x_1}, \ldots, e^{x_n} )^T \in ( 1, \infty )^n$. As
$f$ is a distance preserver and $\bv \bv^T \in \cM_{1+}^*$, we see that
\[
M := D^{-1} f[ \bv \bv^T ] D^{-1} \in \cM_{1+}^*,
\]
by Sylvester's law of inertia. Given any $x_0 > 0 $, the same working holds
with $\bv$ replaced by
$\bv_0 = ( e^{x_0}, e^{x_1}, \ldots, e^{x_n} )^T \in ( 1, \infty )^{n + 1}$,
and the corresponding matrix
\[
M_0 = %
\begin{bmatrix} m_{00} & \bm^T \\[1ex] \bm & M \end{bmatrix} \in \cM_{1+}^*,
\]
where
\[
m_{00} := e^{-2 \alpha x_0} f( e^{2 x_0} ) \quad \text{and} \quad %
\bm^T := ( e^{-\alpha ( x_0 + x_i )} f( e^{x_0 + x_i} ) \bigr)_{i = 1}^n;
\]
we note that
\[
m_{00} \to C \quad \text{and} \quad \bm \to C \bone_n %
\qquad \text{as } x_0 \to \infty.
\]
Given any non-zero $\bx \in \R^n$, the matrix $M_0$ acts on the two-dimensional space spanned by $( 1, \bzero )^T$ and $( 0, \bx )^T$ as
\[
M' := %
\begin{bmatrix}
 m_{00} & \bm^T \bx \\[1ex] \bx^T \bm & \bx^T M \bx
\end{bmatrix}.
\]
By the Poincar\'e separation theorem \cite[Corollary~4.3.37]{HJ},
the matrix $M'$ has no more than one positive eigenvalue. It has exactly one because
$m_{00} > 0$ and either $\bx^T M \bx \ge 0$, in which case $M'$ has positive trace,
or $\bx^T M \bx < 0$, in which case $M'$ has negative determinant.
Thus
\[
m_{00} ( \bx^T M \bx ) - ( \bm^T \bx )^2 \le 0 \iff %
\bx^T M \bx \le \frac{( \bm^T \bx )^2}{m_{00}} \to \frac{C^2 ( \bone^T \bx )^2}{C} = %
C \bx^T \bone \bone^T \bx,
\]
whence $\bx^T ( C \bone \bone^T - M ) \bx \ge 0$. This gives the claim.

\subsection{The continuous zero-order case}

We let
\[
\phi : \R_+ \to \R_+; \ x \mapsto f( e^x )
\]
and note that $\phi$ is continuous and non-decreasing on $\R_+$ and
real analytic on $( 0, \infty )$. We also note that $\phi( y ) = o( y^2 )$
as $y \to \infty$: if $C < \infty$ this is immediate, and it follows from
(\ref{asymp}) otherwise.

For any $\bx := ( x_0, x_1, \ldots, x_n )^T \in ( 0, \infty )^{n + 1}$, we let
\[
\bv := %
( e^{x_0}, e^{x_1}, \ldots, e^{x_n} )^T \in ( 1, \infty )^{n + 1}
\]
and note that
\[
f[ \bv \bv^T ] = \begin{bmatrix}
 \phi( 2 x_0 ) & \bm^T \\[1ex] \bm & M
\end{bmatrix} \in \cM_{1+}^*,
\]
where
$\bm := ( \phi( x_0 + x_1 ), \ldots, \phi( x_0 + x_n ) \bigr)^T$
and
$M = [ \phi( x_i + x_j ) ]_{i, j = 1}^n$.
It follows from the Haynsworth inertia formula \cite{Haynsworth}
that the number of positive eigenvalues of $f[ \bv \bv^T ]$ is one more
than the number of positive eigenvalues of the Schur complement
\[
M - \phi( 2 x_0 )^{-1} \bm \bm^T,
\]
so this matrix must be negative semidefinite. Hence
\[
\by^T M \by \le %
\frac{1}{\phi( 2 x_0 )} \Bigl( \sum_{i = 1}^n y_i \, \phi( x_0 + x_i ) \Bigr)^2 %
\quad ( \by \in \R^n ).
\]
In particular, if $\by \in \R^n$ is such that $\bone^T \by = 0$
and $x_* := \max\{ x_1, \ldots, x_n \}$
then
\[
\Bigl| \sum_{i = 1}^n y_i \phi( x_0 + x_i ) \Bigr| = %
\Bigl| \sum_{i = 1}^n y_i \bigl( \phi( x_0 + x_i ) - \phi( x_0 ) \bigr) \Bigr|  \le %
\sum_{i = 1}^n | y_i | \, \bigl( \phi( x_0 + x_* ) - \phi( x_0 ) \bigr),
\]
since $\phi$ is non-decreasing, and we can let $x_0 \to \infty$ in such
a way as to make
\[
\frac{\phi( x_0 + x_* ) - \phi( x_0 )}{\sqrt{\phi( 2 x_0 )}} \to 0.
\]
To see this, we suppose for contradiction that $\eps > 0$ and $X > 0$
are such that
\[
\bigl( \phi( x_0 + x_* ) - \phi( x_0 ) \bigr)^2 \ge \eps^2 \phi( 2 x_0 ) %
\qquad ( x_0 > X ).
\]
If $x_0 > \max\{ X, 2 x_* \}$ and
the integer $k \in ( x_0 / ( 2 x_* ) , x_0 / x_* )$ then $2 k x_* > x_0 > k x_*$
and
\begin{align*}
\phi( 2 x_0 ) \ge \phi( 2 x_0 ) - \phi( x_0 ) & \ge %
\sum_{i = 1}^k \phi( x_0 + i x_* ) - \phi( x_0 + ( i - 1 ) x_* ) \\[1ex]
 & \ge \eps \sum_{i = 1}^k \sqrt{\phi\bigl( 2 ( x_0 + ( i - 1 ) x_* ) \bigr)},
\end{align*}
so
\[
\sqrt{\phi( 2 x_0 )} \ge k \eps \ge \frac{x_0 \eps}{2 x_*} \quad \iff \quad
\phi( 2 x_0 ) \ge \frac{\eps^2}{4 x_*^2} x_0^2,
\]
which contradicts the fact that $\phi( y ) = o( y^2 )$ as $y \to \infty$.

This shows that $M$ is conditionally negative definite.
We now suppose $\by^T = ( \bv, {-\bv} )$ for some $\bv^T \in \R^n$
and $\bx = ( x_0, x_1, \ldots, x_m, x_1 + \delta, \ldots, x_m + \delta )$,
where $x_1$, \ldots, $x_m > 0$ and $\delta > 0$. This shows that
\[
\bigl[ \phi( x_i + x_j ) - 2 \phi( x_i + x_j + \delta ) + %
\phi( x_i + x_j + 2 \delta ) \bigr]_{i, j = 1}^m = %
\bigl[ \Delta_\delta^2 \phi( x_i + x_j ) \bigr]_{i, j = 1}^m
\]
is negative semidefinite,
where the forward increment $( \Delta_\delta k )( t ) := k( t + \delta ) - k( t )$
for any real-valued function $k$ defined on $( 0, \infty )$
and any $\delta > 0$ and $t > 0$,
with $\Delta_\delta^2 := \Delta_\delta \comp \Delta_\delta$.

If $k$ is real analytic on $( 0, \infty )$ then it may be shown by Taylor expansion that
\[
\delta^{-2} \Delta_\delta^2 k( x ) \to k''( x ) \qquad \text{as } \delta \to {0+}.
\]
Hence the matrix
\[
[ \phi''( x_i + x_j ) ]_{i, j = 1}^n
\]
is negative semidefinite for any $x_1$, \ldots, $x_n > 0$ and so, again by
\cite[Theorem~VI.21]{Widder}, there exists a finite positive Radon measure
on $\R$ such that
\[
-\phi''( x ) = \int_{\R} e^{-x s} \std\mu( s ) \qquad ( x > 0 ).
\]
We claim that $\mu$ is supported on $[ 0, \infty )$. This representation
gives that $\phi''( x ) \le 0$ for any~$x > 0$ and therefore
$\phi'$ is non-increasing. It is bounded below by zero,
as $f$ is non-decreasing on $( 1, \infty )$ and
$\phi'( x ) = e^x f'( e^x ) \ge 0$, so
\[
-\int_1^\infty \phi''( x ) \std x = \phi'( 1 ) - \lim_{x \to \infty} \phi'( x ) < \infty.
\]
However, if $\delta > 0$ is such that
$c := \mu\bigl( ( {-\infty}, {-\delta} ] \bigr) > 0$ then
\[
-\phi''( x ) \ge c e^{\delta x} \qquad ( x > 0 ) \quad \implies \quad %
-\int_1^\infty \phi''( x ) \std x \ge c \int_1^\infty e^{\delta x} \std x = \infty.
\]
Hence $-\phi''$ is completely monotone, by \cite[Theorem~IV.12a]{Widder},
so $\phi'$ is also completely monotone. Since $f \ge 1$ and $f( 1 ) = 1$,
it follows that the function $B := \phi - 1 \in \cB_0$.
Thus we can write $f ( t ) = 1 + B( \log t )$, as required to show that $f \in \cF_2$.

\subsection{The general case}

Let $f$ be a distance preserver and, as above, define
\[
\tilde{f} : [ 1, \infty ) \to [ 1, \infty ); \ t \mapsto %
\left\{\begin{array}{ll}
f( 1+ ) & \text{if } t = 1, \\[1ex] f( t ) & \text{if } t > 1.
\end{array}\right.
\]
Then $\tilde{f}$ is continuous and acts entrywise to map $\cM_{1+}$
into $\cM_{1+}^*$. Since $f$ is non-decreasing, we see that
$\hat{f} := f( {1+} )^{-1} \tilde{f}$ maps $[ 1, \infty )$ to itself and so
$\hat{f}[ - ]$ sends $\cM_{1+}$ into itself.
Hence there exists $\alpha \in [ 0, 1 ]$, $\hat{\beta} \ge 0$ and
$\hat{\nu} \in \cR_2$ such that
$\hat{\beta} \ge \alpha ( 1 + \hat{\nu}\bigl( ( 0, \infty ) \bigr) )$ and
\[
\hat{f}( t ) = 1 + \hat{\beta} \frac{t^\alpha - 1}{\alpha} + %
\int_{( 0, \infty )} ( 1 - t^{-s} ) \std\hat{\nu}( s ) \qquad ( t \ge 1 ).
\]
Thus
\[
f( t ) = 1 + \gamma 1_{( 1, \infty )}( t ) + \beta \frac{t^\alpha - 1}{\alpha} + %
\int_{( 0, \infty )} ( 1 - t^{-s} ) \std\nu( s ),
\]
where $\gamma := f( {1+} ) - 1$, $\beta := f( {1+} ) \hat{\beta}$
and $\nu := f( {1+} ) \hat{\nu}$, with
\[
\beta = f( {1+} ) \hat{\beta} \ge %
f( {1+} ) \alpha (  1 + \hat{\nu}\bigl( ( 0, \infty ) \bigr) ) = %
\alpha( 1 + \gamma + \nu\bigl( ( 0, \infty ) \bigr) ).
\]
Here we include the degenerate case where $\alpha$, $\beta$ and $\nu$ are all zero,
as long as $\gamma > 0$.

\section{Multiplicative representation for continuous preservers}

We recall that if~$f : [ 1, \infty ) \to [ 1, \infty )$ is a distance preserver then
$( \log \comp f )[ - ]$ maps~$\cM_{1+}$ into~$\cN$. If $f$ is also continuous then
it follows \cite[Corollaire~8.2]{FH} that
\begin{equation}\label{cnkernel}
f( t ) = f_{\alpha, \mu}( t ) :=%
 t^\alpha  \exp\Bigl( \int_{( 0, \infty )} \bigl( 1 - t^{-s} \bigr) \std \mu( s ) \Bigr) \qquad %
\textrm{for any } t \in [ 1, \infty ),
\end{equation}
where $\alpha \in \R_+$ and the positive Radon measure $\mu \in \cR_2$.
We call this the \emph{multiplicative} or \emph{exponential Bernstein} representation of $f$.

We have that $f_{\alpha, \mu}( t ) = f_\alpha( t ) f_\mu( t )$ for all $t \ge 1$, where
\[
f_\alpha : [ 1, \infty ) \to [ 1, \infty ); \ t \mapsto t^\alpha
\]
and
\[
f_\mu : [ 1, \infty ) \to [ 1, \infty ); \ %
t \mapsto \exp\Bigl( \int_{( 0, \infty )} ( 1 - t^{-s} ) \std\mu( s ) \Bigr).
\]
We claim that $f_\alpha$ is strictly increasing if $\alpha > 0$ and $f_\mu$ is strictly increasing if $\mu \neq 0$;
thus $f$ is strictly increasing, as $f_{0, 0}( t ) \equiv 1$ is not a distance preserver.

The former is immediate; for the latter, if $f_\mu( p ) = f_\mu( q )$ for $p < q$
then
\[
\int_{( 0, \infty )} ( p^{-s} - q^{-s} ) \std\mu( s ) = 0 \quad \iff \quad %
p^s = q^s \quad \text{$\mu$-almost everywhere}.
\] 
Note also that $f_\alpha( t ) f_\beta( t ) \equiv f_{\alpha + \beta}( t )$ and
$f_\mu( t ) f_\sigma( t ) \equiv f_{\mu + \sigma}( t )$.

We see immediately from (\ref{cnkernel}) that
\[
\frac{\log f( t )}{\log t} = \alpha + %
\int_{( 0, \infty )} \frac{1 - t^{-s}}{\log t} \std\mu( s ) \to \alpha %
\qquad \text{as } t \to \infty,
\]
so our use of $\alpha$ here is appropriate:
it is the order of $f$ and so $\alpha \in [ 0, 1 ]$. We note that the type
\[
C = \lim_{t \to \infty} t^{-\alpha} f( t ) = %
\exp\Bigl( \lim_{t \to \infty} \int_{( 0, \infty )} ( 1 - t^{-s} ) \std\mu( s ) \Bigr) = %
e^{\mu\bigl( ( 0, \infty ) \bigr)} \in [ 1, \infty ].
\]
Hence $f$ has finite type if and only if the measure $\mu$ is finite;
in particular, if $\alpha > 0$ then $\mu$ must be finite.

We suppose now that $f$ has finite type, so that either $f \in \cF_1$ or
$\alpha = \beta = 0$; in each case, we have that
\[
C e^{\alpha x} \exp\bigl(  -\int_{( 0, \infty )} e^{-x s} \std\mu( s ) \Bigr)= %
f( e^x ) = C e^{\alpha x} - \int_{\R_+} e^{-x s} \std\nu( s ),
\]
where the finite positive Radon measure $\nu$ on $\R_+$ has
total mass $\nu( \R_+) = C - 1$. Thus, if $L \sigma$ denotes
the Laplace transform of a measure $\sigma$, we see that
\[
( L \nu )( x ) = C e^{\alpha x} ( 1 - e^{-( L \mu )( x )} ) = C e^{\alpha x} %
\sum_{n = 1}^\infty \frac{( {-1} )^{n + 1}}{n!} ( L \mu^{\star n} )( x )
\]
and, equivalently,
\[
( L \mu )( x ) = -\log\bigl( 1 - C^{-1} e^{-\alpha x} ( L \nu )( x ) \bigr) = %
\sum_{n = 1}^\infty \frac{1}{n} ( L \sigma_\alpha^{\star n} )( x ),
\]
where the finite positive Radon measure
\[
\sigma_\alpha :  A \mapsto C^{-1} \nu\bigl( ( A - \alpha ) \cap \R_+ \bigr).
\]
Hence if $f$ has representation (\ref{cnkernel}) and is of finite type
then $\mu$ is supported on $[ \alpha, \infty )$ and the measure
\begin{equation}\label{alternating}
\delta_0 - \exp_\star( -\mu ) = %
\sum_{n = 1}^\infty \frac{( {-1} )^{n + 1}}{n!} \mu^{\star n}
\end{equation}
is positive. Conversely, if the finite Radon measure $\mu$ on $\R_+$
is supported on $[ \alpha, \infty )$ for some $\alpha \in [ 0, 1 ]$ and
(\ref{alternating}) is positive then the function $f_{\alpha, \mu}$ defined by
(\ref{cnkernel}) is a continuous distance preserver.

If $f$ has infinite type then $\alpha = 0$ and there exists a Bernstein
function $B \in \cB_0$ such that
\[
\int_{( 0, \infty )} ( 1 - e^{-x s} ) \std\mu( s ) = \log f( e^ x ) =: F( x ) = %
\log\bigl( 1 + B( x ) \bigr) \qquad ( x \ge 0 ).
\]
We see that $\mu$ is the representing measure for the Bernstein function
$F \in \cB_0$ and so every $\mu \in \cR_2$ may appear when $\alpha = 0$.

To provide an example to illustrate this working, we
suppose $c$, $d \in ( 0, \infty )$ and consider $f$ defined by
(\ref{cnkernel}) with $\alpha \in [ 0, 1 ]$ and $\mu = c \delta_d$, so that
\[
f( e^x ) = e^{\alpha x } \exp\bigl( c ( 1 - e^{-d x} ) \bigr) = %
e^{\alpha x + c} \exp( -c e^{-d x} ) \qquad ( x \ge 0 ).
\]
If $( y_1, y_2, y_3 )^T \in \R_+$ then 
$\bv := ( e^{y_1}, e^{y_2}, e^{y_3} )^T \in [ 1, \infty )^3$ and
$f[ \bv \bv^T ] = D A D$, where
\[
D := e^{c / 2} \diag( e^{\alpha y_1}, e^{\alpha y_2}, e^{\alpha y_3} ) %
\quad \text{and} \quad %
A = \bigl[ \exp\bigl( - c e^{-d ( y_i + y_j )} ) \bigr) \bigr]_{i, j = 1}^3.
\]
Given $h \in ( 0, c / 9 ]$,we set
\[
y_i := -\frac{1}{d} \log\Bigl( i \sqrt{\frac{h}{c}} \, \Bigr) \ge 0 \qquad ( i = 1, 2, 3 ).
\]
Then
\[
-c \exp\bigl( -d ( y_i + y_j ) \bigr) = -i j h
\]
and, letting $x = e^{-h}$,
\[
A = \begin{bmatrix}
 e^{-h} & e^{-2 h} & e^{-3 h} \\[1ex]
 e^{-2 h} & e^{-4 h} & e^{-6 h} \\[1ex]
 e^{-3 h} & e^{-6 h} & e^{-9 h}
\end{bmatrix} = x %
\begin{bmatrix} 1 & x & x^2 \\[1ex] x & x^3 & x^5 \\[1ex] x^2 & x^5 & x^8 \end{bmatrix} %
\not\in \cM_{1+}^*,
\]
since $A$ has determinant $-x^{10} ( 1 - x )^3 ( 1 + x ) < 0$.

It follows that $f_{\alpha, \mu}$ is not a distance preserver
for any $\alpha \ge 0$ and any $c$, $d > 0$.
It is clear that the positivity constraint is violated, as
\[
\sum_{n = 1}^\infty \frac{( {-1} )^{n + 1}}{n!} \mu^{\star n} = %
c \delta_d - \frac{c^2}{2} \delta_{2 d} + \frac{c^3}{6} \delta_{3 d} - \cdots
\]
has negative mass at $2 d$.

\subsection{Examples of multiplicative representation}

The representation (\ref{cnkernel}) of $f_1$ is immediate:
the constant $\alpha = a$ and the measure $\mu$ is the zero measure.

For $f_2$, we note first that
\begin{equation}\label{claimedid}
\log( 1 + x ) = %
\int_{( 0, \infty )} \bigl( 1- e^{-x s} \bigr) \frac{e^{-s}}{s} \std s %
\qquad ( x \geq 0 );
\end{equation}
to see this, we let
\[
H : ( 0, \infty ) \times [ 0, \infty ) \to \R; \ %
( s, x ) \mapsto ( 1 - e^{-x s} ) \frac{e^{-s}}{s}
\]
and note that
\[
\int_{( 0, \infty )} \frac{\partial H}{\partial x}( s, x ) \std s = %
\int_{( 0, \infty )} e^{-( 1 + x ) s} \std s = \frac{1}{1 + x} \qquad ( x \geq 0 )
\]
so (\ref{claimedid}) holds if we can differentiate under the integral sign.
For any $s \in ( 0, \infty )$, the mean-value theorem gives $r \in ( 0, s )$
such that
\[
\frac{1 - e^{-x s}}{s} = x e^{-x r} \in [ 0, x ]
\]
and therefore the right-hand side of (\ref{claimedid}) is well defined.
We also have that
\[
\biggl| \frac{\partial H}{\partial x}( s, x ) \biggr| \leq e^{-s} %
\qquad ( s > 0, \ x \geq 0 )
\]
and differentiation under the integral sign is valid, as this upper bound is an
integrable function of $s$ on $( 0, \infty )$ and is independent of $x$. 

Thus if $b \in ( 0, \infty )$ and $t \in [ 1, \infty )$ then
\begin{equation}
\log f_2( t ) = \log( 1+ b \log t ) = %
\int_{( 0, \infty )} \bigl( 1 - e^{-b s \log t} \bigr) \frac{e^{-s}}{s} \std s = %
 \int_{( 0, \infty)} \bigl( 1 - t^{-r} \bigr) \frac{e^{-r / b}}{r} \std r,
\end{equation}
so the representing measure in (\ref{cnkernel}) is supported on $( 0, \infty )$
and is absolutely continuous there with density
$s \mapsto e^{-s / b} / s$; we note that
\[
\int_{( 0, \infty )} \frac{s}{1 + s} \frac{e^{-s / b}}{s} \std s = %
b \int_{( 0, \infty )} \frac{e^{-u}}{1 + b u} \std u \leq b < \infty.
\]

To find the representation (\ref{cnkernel}) of $f_3$, we note that
\[
\log f_3( t ) = \log( 1 + p ) + \log\Bigl( 1 - \frac{p}{p + 1} t^{-a} \Bigr) = %
-\log\Bigl( 1 - \frac{p}{p + 1} \Bigr) + \log\Bigl( 1 - \frac{p}{p + 1} t^{-a} \Bigr)
\]
and use the fact that
\[
\log( 1 - x ) = -\sum_{n = 1}^\infty \frac{x^n}{n} \qquad ( -1 \le x < 1 )
\]
to write
\[
\log f_3( t ) = %
\sum_{n = 1}^\infty \frac{1}{n} \Bigl( \frac{p}{p + 1} \Bigr)^n ( 1 - t^{-a n} ) = %
\int_{( 0, \infty )} ( 1 - t^{-s} ) \std \mu_{p, a}( s )
\]
where the representing measure
\[
\mu_{p, a} := %
\sum_{n = 1}^\infty \frac{1}{n} \Bigl( \frac{p}{p + 1} \Bigr)^n \delta_{a n}
\]
has total mass $\log( p + 1 )$.

Furthermore, we can set $r := ( q - 1 ) / q $ and use the working for $f_3$ to write
\[
\log f_4( t ) - \log t = \log\Bigl( \frac{1 - q + q t}{t - q t + q t} \Bigr) = %
\log\Bigl( \frac{t - r}{t - t r} \Bigr) = %
\log\Bigl( \frac{1 - ( r / t )}{1 - r} \Bigr) = %
\sum_{n = 1}^\infty \frac{r^n}{n} ( 1 - t^{-n} ).
\]
Hence $f_4 = f_{1, \mu_q}$, where
\[
\mu_q := \sum_{n = 1}^\infty \frac{1}{n} \Bigl( \frac{q - 1}{q} \Bigr)^n \delta_n
\]
has total mass $\log q$.

\subsection{A probabilistic interpretation}\label{sec:probab}

In this section, we follow \cite{Bertoin} but with some changes of notation
to avoid clashes with that used elsewhere.

A \emph{subordinator} is defined to be a non-decreasing L\'evy process,
that is, a stochastic process $S = ( S_u )_{u \in \R_+}$  with non-decreasing
c\`adl\`ag paths and independent stationary increments, such that $S_0 = 0$;
we regard the index $u$ as representing time.

The possibility that $S$ increases to $\infty$ in finite time is included,
so we let the stopping time $\zeta := \inf\{ u \ge 0 : S_u = \infty \}$ and
then, conditionally on $\{ u < \zeta \}$, it holds that the increment
$S_{u + v } - S_u$ is independent of the right-continuous and complete
$\sigma$-algebra generated by $\{ S_w : w \le u \}$ and has the
same distribution as $S_u$.

Given a subordinator $S$, its \emph{Laplace exponent} $F$ is defined
by the identity
\[
\expn[ e^{-x S_u} ] = e^{-u F( x ) } \qquad ( x \ge 0 ),
\]
with the convention that $e^{-x \infty} = 0$ for every $x$.
Then $F$ has the form
\[
F( x ) = k + d x + \int_{( 0, \infty )} ( 1 - e^{-x s} ) \std\Pi( s ) \qquad ( x \ge 0 ),
\]
where the \emph{killing rate} $k \in \R_+$, the drift $d \in \R_+$ and the
L\'evy measure $\Pi \in \cR_2$. Conversely, every such triple gives rise
to the Laplace exponent of some subordinator.

The lifetime $\zeta$ has an exponential distribution with parameter $k$,
so that $\zeta \equiv \infty$ if and only if $k = 0$. Such subordinators are called
\emph{strict}.

An example of a strict subordinator is the Gamma subordinator
$\Gamma = ( \Gamma_u )_{u \ge 0}$ with Laplace exponent $F_\Gamma$
such that
\[
F_\Gamma( x ) := \log( 1 + x ) = %
\int_{( 0, \infty )} ( 1 - e^{-x s} ) \frac{e^{-s}}{s} \std s \qquad ( x \ge 0 ),
\]
where the integral identity follows from (\ref{claimedid}).
A Gamma random variable $X$ with shape~$a$ and rate $b$
has density function
\[
\R_+ \to \R_+; \ x \mapsto \frac{b^a}{\Gamma( a )} x^{a - 1} e^{-b x}
\]
and Laplace transform
\[
\expn[ e^{-x X} ] = \Bigl( \frac{b}{b + x} \Bigr)^a = %
\exp\bigl( -a \log( 1 + b^{-1} x ) \bigr),
\]
so $\Gamma_u$ is a Gamma random variable with shape $a = u$
and rate $b = 1$.

Given any continuous preserver $f$ with multiplicative representation (\ref{cnkernel}), we see that
\[
F( x ) := \log f( e^x ) = \alpha x + \int_{( 0, \infty )} ( 1 - e^{-x s} ) \std\mu( s )
\]
and there exists a strict subordinator $S$ such that
\[
\expn[ e^{-x S_u} ] = e^{-u F( x )} \qquad \text{ for any } x, u \ge 0.
\]
If $f$ has order $\alpha = 0$, which we assume from now on in this section,
we also have a Laplace exponent from the additive representation: we let
\[
G( x ) := f( e^x ) = 1 + \beta x + \int_{( 0, \infty )} ( 1 - e^{-x s} ) \std\nu( s ) %
\qquad ( x \ge 0 )
\]
and $H := G - 1 \in \cB_0$. Then there exist subordinators $R$ and $T$,
with $T$ strict, such that
\[
\expn[ e^{-x R_u} ] = e^{-u G( x )} \quad \text{and} \quad %
\expn[ e^{-x T_u} ] = e^{-u H( x )} \qquad \text{ for any } x, u \ge 0.
\]

From the fact that $F = \log G$, we see that if $U_R$ is the renewal measure
of the subordinator $R$ \cite[Section~1.3]{Bertoin} then
\[
\int_{\R_+} e^{-x y} \std U_R( y ) = %
\expn\Bigl[ \int_{\R_+} e^{-x R_u} \std u \Bigr] = %
\int_{\R_+} e^{-u G( x )} \std u = \frac{1}{G( x )} = e^{-F( x )} = \expn[ e^{-x S_1} ].
\]
Hence $U_R$ is equal in distribution to the law of $S_1$.

From the fact that $F = \log( 1 + H ) = F_\Gamma \comp H$, we have the
Bochner subordination \cite[Proposition~8.6]{Bertoin}
\[
S_u \dequals T_{\Gamma_u} \qquad ( u \ge 0 ),
\]
where $\dequals$ denotes equality of distributions and the processes
$\Gamma$ and $T$ are independent.

The continuous distance preserver $f_2( t ) \equiv 1 + b \log t$
has order $\alpha = 0$  and additive representing measure $\nu = 0$,
so $H( x ) \equiv b x$ and $T_u = b u$, whence $S_u \dequals b \Gamma_u$;
note that
\[
\expn[ e^{-x b \Gamma_u}  ] =  \Bigl( \frac{1}{1 + b x} \Bigr)^u = %
\exp\bigl(-u \log( 1 + b x ) \bigr) = \exp\bigl( -u \log f_2( e^x ) \bigr) = %
\expn[ e^{-x S_u} ],
\]
as required.

The continuous distance preserver $f_3( t ) \equiv 1 + p  - p t^{-a}$
has order $\alpha = 0$ and additive representing measure
$\nu_{p, a} = p \delta_a$, so $H( x ) \equiv p ( 1 - e^{-a x } )$
and
\[
\expn[ e^{-x T_u} ] = \exp\big( u p (  e^{-a x} - 1 \bigr) ) \qquad ( u, x \ge 0 ).
\]
If $N_x$ denotes a Poisson random variable with mean $x$ then
we have that $T_u \dequals a N_{p u}$
and therefore $S_u \dequals a N_{p \Gamma_u}$ for all $u \ge 0$,
so $S$ is a negative-binomial process.

\begin{theorem}\label{subord}
Let $S$ be the strict subordinator with Laplace exponent
\[
F : \R_+ \to \R_+; \ x \mapsto \int_{( 0, \infty )} ( 1 - e^{-x s} ) \std\mu( s ) %
\qquad ( \mu \in \cR_2 )
\]
and let $T$ be a subordinator that is independent of $\Gamma$. Then
$S_u \dequals T_{\Gamma_u}$ for all $u \ge 0$ if and only if
$T$ has zero killing rate and Laplace exponent $H$ such that
$F = \log( 1 + H )$.
\end{theorem}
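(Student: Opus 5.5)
The whole argument runs through Laplace transforms, conditioning on the independent Gamma subordinator. Let $T$ be any subordinator independent of $\Gamma$, with Laplace exponent $H_0(x) = k + dx + \int_{(0,\infty)}(1-e^{-xs})\std\Pi(s)$, where $k$ is its killing rate. Using the convention $e^{-x\infty}=0$ and Fubini, for $x,u\ge0$ we have
\[
\expn[ e^{-x T_{\Gamma_u}} ] = \expn\bigl[ \expn[ e^{-x T_{\Gamma_u}} \mid \Gamma_u ] \bigr] = \expn[ e^{-\Gamma_u H_0( x )} ] = \bigl( 1 + H_0( x ) \bigr)^{-u} = e^{-u \log( 1 + H_0( x ) )}.
\]
The third equality is the Gamma Laplace transform with shape $u$ and rate $1$, computed earlier. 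On the other hand, $\expn[e^{-xS_u}] = e^{-uF(x)}$.

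For the ``if'' direction, take $k=0$ and $F = \log(1+H)$ with $H=H_0$. The two Laplace transforms then agree for every $x\ge0$. Both $S_u$ and $T_{\Gamma_u}$ are $[0,\infty]$-valued, and since $F$ is finite the law of $S_u$ charges only $[0,\infty)$. Equality of Laplace transforms on $[0,\infty)$ therefore determines the law on $[0,\infty]$, and we get $S_u \dequals T_{\Gamma_u}$.

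For the ``only if'' direction, equality in distribution for all $u$ gives $e^{-uF(x)} = (1+H_0(x))^{-u}$, hence $F(x) = \log(1+H_0(x))$ for all $x\ge 0$. At $x=0$ we have $F(0)=0$, because $F$ has no killing term, while $H_0(0)=k$. So $\log(1+k)=0$ and $k=0$, and $T$ has zero killing rate with $F=\log(1+H)$.

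The only delicate points are the following. First, the conditioning step must be justified when $T$ may jump to $\infty$; this is handled by the convention $e^{-x\infty}=0$ together with independence and Tonelli. Second, the value at $x=0$ must be handled, since it is exactly what detects the killing rate: $\expn[e^{-0\cdot T_{\Gamma_u}}] = \mathbb{P}(T_{\Gamma_u}<\infty)$. I would also note, though it is not needed for the equivalence, that for any such $H\in\cB_0$ the composition $\log(1+H)$ is again a Bernstein function vanishing at $0$ with no killing. This is consistent with $S$ being strict, and any $\mu$ for which $e^{F}-1$ is Bernstein yields such a $T$.
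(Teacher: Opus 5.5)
Your proof is correct and follows the paper's argument essentially verbatim: condition on $\Gamma_u$ to get $\expn[e^{-xT_{\Gamma_u}}] = e^{-u\log(1+F_T(x))}$, compare this with $e^{-uF(x)}$, and read off zero killing rate from $F(0)=0$. Your extra care about mass at $\infty$ (the transform at $x=0$ equals $\mathbb{P}(\cdot<\infty)$) fills in a uniqueness step the paper leaves implicit; one small correction is that $S_u<\infty$ almost surely because $F(0)=0$, not because $F$ is finite.
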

\begin{proof}
If $T$ is a subordinator with Laplace exponent $F_T$ that is independent
of $\Gamma$ then conditioning on $\Gamma_u$ gives that
\[
\expn[ e^{-x T_{\Gamma_u}} ] = \expn[ e^{-\Gamma_u F_T( x )} ] = %
e^{-u ( F_\Gamma \circ F_T )( x )}.
\]
Hence $S_u \dequals T_{\Gamma_u}$ holds for all $u \ge 0$
if and only if $F = F_\Gamma \circ F_T= \log( 1 + F_T )$.

If $T$ has zero killing rate and Laplace exponent $H$ then
$F_T = H$ and so $S_u \dequals T_{\Gamma_u}$ for all $u \ge 0$.

Conversely, if $S_u \dequals T_{\Gamma_u}$ for all $u \ge 0$ then
$F = \log( 1 + F_T )$, so $F_T = e^F - 1$. In particular, we see that
$T$ has zero killing rate, since $F( 0 ) = 0$, and the result follows.

\end{proof}

\begin{corollary}
Suppose $S$, $T$ and $\mu$ as in the statement of Theorem~\ref{subord}.
If $\mu$ is finite then $S_u \dequals T_{\Gamma_u}$ for all $u \ge 0$
if and only if
\[
\sigma := \delta_0 - \exp_\star( -\mu ) = %
\sum_{n = 1}^\infty \frac{( {-1} )^{n + 1}}{n!} \mu^{\star n}
\]
is a positive measure, in which case the Lévy measure $\nu$ of $T$ equals
$\exp( \mu\bigl( ( 0, \infty ) \bigr) ) \, \sigma$.
\end{corollary}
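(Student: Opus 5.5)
By Theorem~\ref{subord}, $S_u \dequals T_{\Gamma_u}$ for all $u \ge 0$ if and only if $T$ has zero killing rate and its Laplace exponent $H$ satisfies $H = e^F - 1$. The statement then reduces to one question: when is $e^F - 1$ the Laplace exponent of a subordinator with zero killing rate, i.e.\ a member of $\cB_0$ (or the zero function, which is excluded when $\mu \neq 0$)? Throughout I write $m := \mu\bigl( ( 0, \infty ) \bigr) < \infty$.

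\textbf{Computing $e^F - 1$.} Since $\mu$ is finite, $F( x ) = m - ( L \mu )( x )$. Expanding the exponential as a power series, which converges in total variation because $\| \mu^{\star n} \| = m^n$, gives
\[
e^{F( x )} - 1 = e^m e^{-( L \mu )( x )} - 1 = e^m \bigl( L \exp_\star( -\mu ) \bigr)( x ) - 1 .
\]
Now $\exp_\star( -\mu ) = \delta_0 - \sigma$, so
\[
e^{F( x )} - 1 = ( e^m - 1 ) - e^m ( L \sigma )( x ),
\]
where $\sigma$ is a finite signed measure on $( 0, \infty )$ of total mass $1 - e^{-m}$. The mass of $\sigma$ at the origin vanishes because each $\mu^{\star n}$ is supported in $[ n \inf \operatorname{supp} \mu, \infty )$ and $\mu$ has no atom at $0$. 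Hence
\[
e^{F( x )} - 1 = \int_{( 0, \infty )} ( 1 - e^{-x s} ) \, e^m \std\sigma( s ).
\]

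\textbf{Sufficiency.} If $\sigma \ge 0$, the display above exhibits $H := e^F - 1$ as an element of $\cB_0$ with drift zero and finite L\'evy measure $e^m \sigma$. Let $T$ be the strict subordinator with Laplace exponent $H$, independent of $\Gamma$. Theorem~\ref{subord} then gives $S_u \dequals T_{\Gamma_u}$, and the L\'evy measure of $T$ is $e^m \sigma$.

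\textbf{Necessity.} Suppose $S_u \dequals T_{\Gamma_u}$. Then $H = e^F - 1$ is a Bernstein function with $H( 0 ) = 0$, so it has a representation $d x + \int ( 1 - e^{-x s} ) \std\nu( s )$ with $d \ge 0$ and $\nu \ge 0$. Since $H$ is bounded by $e^m - 1$, we get $d = 0$ and $\nu$ finite. Comparing with the formula above, and using that $H( 0 ) = 0$ forces the constant terms to agree, yields $L\bigl( \nu - e^m \sigma \bigr) \equiv 0$ on $( 0, \infty )$. Uniqueness of the Laplace transform for finite signed measures then gives $\nu = e^m \sigma$, so $\sigma \ge 0$ and the L\'evy measure of $T$ is as claimed.

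\textbf{Main obstacle.} The only delicate step is justifying the termwise manipulations: exchanging $\exp$ with the Laplace transform, and handling the bookkeeping of constants and atoms at zero. Finiteness of $\mu$ makes these routine by dominated convergence in total variation, and they mirror the computation of (\ref{alternating}) in the previous section.
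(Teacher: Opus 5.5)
Your proposal is correct and follows the route implicit in the paper: Theorem~\ref{subord} reduces the question to whether $e^F - 1$ is a Bernstein function vanishing at the origin, and the identity $L\nu = e^{m}(1 - e^{-L\mu}) = e^{m} L\sigma$ (the $\alpha = 0$ case of the multiplicative-representation computation), combined with uniqueness of the Laplace transform, settles both directions. Two small justifications need repairing: $\sigma(\{0\}) = 0$ should come from $\mu^{\star n}(\{0\}) = \mu(\{0\})^n = 0$, since the support bound says nothing when $0$ lies in the support of $\mu$; and $\nu((0,\infty)) = e^m - 1$ should come from letting $x \to \infty$ (both $L\nu$ and $L\sigma$ tend to $0$ because neither measure charges the origin), not from $H(0) = 0$, which both representations satisfy automatically.
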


\appendix

\section{Minor proofs and auxiliary results}\label{appendix}

\begin{proposition}\label{decomposition}
Any $x \in \cL \setminus \{ e \}$ may be written uniquely in the form
\[
x = e \cosh \alpha + \omega \sinh \alpha,
\]
where $\alpha \in ( 0, \infty )$ and $\omega \in \ell^2( \N )$ is a unit vector.
Conversely, the vector
\[
x = e \cosh \alpha + \omega \sinh \alpha \in \cL
\]
for any $\alpha \in \R_+$ and any unit vector $\omega \in \ell^2( \N )$. 
\end{proposition}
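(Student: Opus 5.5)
The plan is a direct computation in the decomposition $\ell^2 = \R e \oplus \ell^2( \N )$, with existence, uniqueness and the converse handled separately.

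\emph{Converse.} Take $\alpha \in \R_+$ and a unit vector $\omega \in \ell^2( \N )$, and set $x := e \cosh \alpha + \omega \sinh \alpha$. Since $e = ( 1, 0 )$ and $\omega$ lies in $\{ 0 \} \oplus \ell^2( \N )$, we have $x_0 = \cosh \alpha > 0$ and $x' = \omega \sinh \alpha$. Hence
\[
[ x, x ] = \cosh^2 \alpha - \sinh^2 \alpha \, \langle \omega, \omega \rangle = \cosh^2 \alpha - \sinh^2 \alpha = 1,
\]
so $x \in \cL$.

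\emph{Existence.} Let $x \in \cL \setminus \{ e \}$ and write $x = ( x_0, x' )$. We have $x_0 > 0$ and $x_0^2 = 1 + \langle x', x' \rangle$. If $x' = 0$ then $x_0 = 1$ and $x = e$, which is excluded. So $x' \neq 0$, and therefore $x_0 > 1$.

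Set $\alpha := \arcosh x_0 \in ( 0, \infty )$, so that $\cosh \alpha = x_0$. Then
\[
\sinh \alpha = \sqrt{x_0^2 - 1} = \| x' \| > 0.
\]
Define $\omega := x' / \| x' \|$, regarded as an element of $\{ 0 \} \oplus \ell^2( \N )$. This is a unit vector, and
\[
e \cosh \alpha + \omega \sinh \alpha = ( x_0, x' ) = x.
\]

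\emph{Uniqueness.} Suppose $x = e \cosh \alpha + \omega \sinh \alpha$ with $\alpha > 0$ and $\| \omega \| = 1$. Comparing the $0$th coordinates gives $\cosh \alpha = x_0$. Because $\cosh$ is strictly increasing on $( 0, \infty )$, this determines $\alpha$ uniquely. Then $\sinh \alpha > 0$, so comparing the $\ell^2( \N )$ components gives $\omega = x' / \sinh \alpha$, which is therefore uniquely determined.

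The only point that needs care is the requirement $\alpha > 0$ in the uniqueness statement. When $\alpha = 0$ every unit vector $\omega$ gives the same point $e$, which is exactly why $e$ is excluded. For $x \neq e$ we showed $x_0 > 1$, so $\alpha > 0$ and $\sinh \alpha \neq 0$. This is what makes the division defining $\omega$ legitimate.
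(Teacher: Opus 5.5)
Your proof is correct and follows essentially the same route as the paper's. Existence takes $\cosh \alpha = x_0$ and $\omega = x' / \sinh \alpha$ after noting $x' \neq 0$; uniqueness reads off $\cosh \alpha = x_0 = [ e, x ]$ and then divides by $\sinh \alpha > 0$; and the converse is the computation $[ x, x ] = \cosh^2 \alpha - \sinh^2 \alpha = 1$.
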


\begin{proof}
If $x  = ( x_0, x' ) \in \cL \setminus \{ e \}$ then $x' \neq 0$ and
$x_0^2 = 1 + \langle x', x' \rangle > 1$, so $x_0 > 1$.
Hence there exists $\alpha \in ( 0, \infty )$ such that $x = e \cosh \alpha + \omega \, s$,
where $\omega = s^{-1} \, x'$ and $s > 0$ is such that
\[
s^2 = \langle x', x' \rangle = x_0^2 - 1 = \cosh^2 \alpha - 1 = \sinh^2 \alpha.
\]
This gives the representation claimed. For uniqueness, if
\[
x = e \cosh \alpha + \omega \sinh \alpha = e \cosh \beta + \varpi \sinh \beta
\]
then
\[
\cosh \alpha = [ e, x ] = \cosh \beta,
\]
so $\alpha = \beta$ and $\omega = \varpi$.

For the converse, we have that
\begin{align*}
[ x, x ] & = [ e, e ] \cosh^2 \alpha + 2 [ e, \omega ] \cosh \alpha \sinh \alpha %
 + [ \omega, \omega] \sinh^2 \alpha  \\[1ex]
 & = \cosh^2 \alpha -  \langle \omega, \omega \rangle \sinh^2 \alpha \\[1ex]
 & = 1.\qedhere
\end{align*}
\end{proof}

\begin{proposition}\label{bound-metric}
Let $( X, d )$ be a metric space and let $\delta \in ( 0, \infty )$. Then
\[
d' : X \times X \to \R_+; \ ( x, y ) \mapsto \min\{ d( x, y ), \delta \}
\]
is also a metric on $X$.
\end{proposition}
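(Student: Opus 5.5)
We check the metric axioms for $d'$ one at a time, taking them directly from those for $d$. Symmetry holds because $d$ is symmetric and $\min$ is symmetric in the first argument's role. Non-negativity is immediate since $d \ge 0$ and $\delta > 0$. For faithfulness, $d'(x,y) = 0$ forces $\min\{d(x,y),\delta\} = 0$; since $\delta > 0$, this means $d(x,y) = 0$, so $x = y$. Conversely $d'(x,x) = \min\{0,\delta\} = 0$.

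The only step requiring thought is the triangle inequality $d'(x,z) \le d'(x,y) + d'(y,z)$, and we split into two cases. First suppose $d(x,y) \ge \delta$ or $d(y,z) \ge \delta$. Then one of the terms on the right equals $\delta$ and the other is non-negative, so the right-hand side is at least $\delta \ge d'(x,z)$. Otherwise $d'(x,y) = d(x,y)$ and $d'(y,z) = d(y,z)$, and the triangle inequality for $d$ gives
\[
d'(x,z) \le d(x,z) \le d(x,y) + d(y,z) = d'(x,y) + d'(y,z).
\]

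There is no real obstacle here. The only point to watch is that the case split uses $\delta > 0$ for faithfulness and uses $d' \le \delta$ in the first case.
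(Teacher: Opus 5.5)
Your proof is correct and matches the paper's argument: the paper also treats only the triangle inequality as non-trivial, and uses the same case split. It bounds the right-hand side below by $\delta$ when $d(x,y)\ge\delta$ or $d(y,z)\ge\delta$, and otherwise applies the triangle inequality for $d$ together with $d'\le d$; your explicit checks of symmetry and faithfulness (using $\delta>0$) simply fill in the steps the paper leaves to the reader.
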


\begin{proof}
The only non-trivial thing to check is that the triangle inequality holds.
For this, let $x$, $y$, $z \in X$ and
note that if $d( x, y ) \geq \delta$ or $d( y, z ) \geq \delta$ then
\[
d'( x, y ) + d'( y, z ) \geq \delta \geq d'( x, z ).
\]
Finally, if $d( x, y ) < \delta$ and $d( y, z ) < \delta$ then
\[
d'( x, y ) + d'( y, z ) = d( x, y ) + d( y, z ) \geq d( x, z ) \geq d'( x, z ).\qedhere
\]
\end{proof}

\begin{proposition}\label{counterexample}
Fix $\delta \in ( 0, \infty )$ and let
\[
\vartheta : \R \to \R; \ x \mapsto \min\{ x, \delta \}.
\]
Then  $\vartheta^{-1}\bigl( \{ 0 \} \bigr) = \{ 0 \}$ and $\vartheta \comp d$
is a metric on $\cL$, where $d$ is the hyperbolic metric,
but the kernel corresponding to
\[
f : [ 1, \infty ) \to [ 1, \infty ); \ t \mapsto \cosh \vartheta( \arcosh t ) = %
\min\{ t, \cosh \delta \}
\]
does not have exactly one positive square.
\end{proposition}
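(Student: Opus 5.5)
The first two claims are routine, so the plan concentrates on exhibiting a Lorentz--Gram matrix that $f$ fails to preserve. Since $\vartheta(x)=0$ exactly when $x=0$, we have $\vartheta^{-1}(\{0\})=\{0\}$. That $\vartheta\comp d$ is a metric on $\cL$ is Proposition~\ref{bound-metric} applied to $(\cL,d)$. Because $\cosh$ is increasing on $\R_+$, $f(t)=\cosh\min\{\arcosh t,\delta\}=\min\{t,c\}$ with $c:=\cosh\delta>1$. So it remains to find $M\in\cM_{1+}$ with $f[M]\notin\cM_{1+}$; by Theorem~\ref{Krein-Gram} such an $M$ is realised by points of $\cL$, and this shows the kernel does not have exactly one positive square.

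I will use the $3\times3$ family $M_{a,b}$. By Proposition~\ref{examplematrix}, as its use in the text indicates, $M_{a,b}\in\cM_{1+}$ if and only if $b\le 2a^2-1$, for $a,b\ge1$. The idea is to let the truncation cap the large corner entry while the middle entries stay below the cap. Choose $a\in(1,c)$ and $b\ge c$ with $b\le 2a^2-1$. Then $f[M_{a,b}]=M_{a,c}$, and we need $c>2a^2-1$, which says $a<\sqrt{(1+c)/2}$. Any $a$ with
\[
1<a<\min\bigl\{c,\sqrt{(1+c)/2}\bigr\}
\]
will do; since $\sqrt{(1+c)/2}<c$ for $c>1$, the minimum is simply $\sqrt{(1+c)/2}$. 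Then set $b:=\max\{c,2a^2-1\}$. Checking the constraints: $b\ge c$ holds by construction. For $b\le 2a^2-1$, note that $2a^2-1<c$, so $b=c$, and we need $c\le 2a^2-1$, which is false. So $b=c$ directly is not available, and I must let $b$ exceed $c$ while keeping $2a^2-1\ge b\ge c$. That forces $2a^2-1\ge c$, which contradicts $2a^2-1<c$. The $3\times3$ family therefore cannot work, because the image $M_{a,\min\{b,c\}}$ always inherits the inequality.

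This is the main obstacle, and it calls for a configuration where truncation genuinely destroys the inertia. A natural choice uses four collinear points on a geodesic, at parameters $0,\delta,2\delta,3\delta$ (or three points with two short spacings and a long one). On the geodesic $\ell_0$, distances are $|x_i-x_j|$. After truncation at $\delta$, all pairs at distance $\ge\delta$ become distance $\delta$, so for equally spaced points $0,\delta,\dots,(n-1)\delta$ the image is the matrix $(1-c)\Id+c\bone\bone^T$. Its eigenvalues $1-c<0$ and $1+(n-1)c>0$ give exactly one positive eigenvalue, so equal spacing also fails.

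Instead I will take points at $0,\varepsilon,\delta,\delta+\varepsilon$ with small $\varepsilon$. The image has entries $\cosh\varepsilon$ for the pairs $(0,\varepsilon)$ and $(\delta,\delta+\varepsilon)$, $\cosh(\delta-\varepsilon)$ for the pair $(\varepsilon,\delta)$, and $c$ for the others. I will compute the second eigenvalue via the test vector $\bone_{\{1,2\}}-\bone_{\{3,4\}}$ together with $(1,-1,0,0)$-type vectors, or more simply check a $2\times2$ compression as in the proof of Theorem~\ref{bigtestmatrix}. If this still fails, I would fall back on the Poincar\'e separation approach: find two orthogonal vectors on whose span the quadratic form of $f[M]$ is positive definite. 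Locating that configuration, possibly using a non-collinear point so that the ``shortcut'' distances $d(x,z)\ge\delta$ are capped while $d(x,y),d(y,z)<\delta$, is the step I expect to require the most trial, and I would try a symmetric star configuration (centre plus points at distance $\delta/2$ whose mutual distances exceed $\delta$) first.
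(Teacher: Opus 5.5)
Your treatment of $\vartheta^{-1}(\{0\})=\{0\}$, of the metric property via Proposition~\ref{bound-metric}, and of the identity $f(t)=\min\{t,\cosh\delta\}$ is fine. You are also right that the $3\times3$ family and equal spacing $\delta$ cannot work. The gap is that you never produce what the proposition actually requires: an explicit $M\in\cM_{1+}$ together with a certificate that $f[M]$ has two positive eigenvalues. You give candidates but defer the verification (``if this still fails, I would fall back on \ldots''), so the main claim is unproved.

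Two of your concrete suggestions would not get you there. Your preferred first attempt, the star configuration, cannot exist: leaves at distance $\delta/2$ from a centre have mutual distances at most $\delta$ by the triangle inequality, never exceeding it. For the configuration $0,\varepsilon,\delta,\delta+\varepsilon$, the test vectors you name give negative values of the quadratic form. The vector $(1,1,-1,-1)^T$ gives $4+2(2\cosh\varepsilon-3\cosh\delta-\cosh(\delta-\varepsilon))$, and $(1,-1,0,0)^T$ gives $2-2\cosh\varepsilon$. These detect negative directions, not a second positive one.

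The missing piece is a concrete certificate. The paper takes four points on $\ell_0$ with spacing $h=\delta/2$, rather than spacing $\delta$. Then only the entry $\cosh 3h$ is capped, to $b=\cosh\delta=2a^2-1$ with $a=\cosh h$. The orthogonal vectors $\bu=(1,1,1,1)^T$ and $\bv=(1,-1,1,-1)^T$ satisfy $\bu^Tf[M]\bv=0$, $\bu^Tf[M]\bu=4+6a+6b>0$ and $\bv^Tf[M]\bv=4-6a+2b=2(a-1)(2a-1)>0$. Hence $f[M]$ is positive definite on a two-dimensional subspace.

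Your configuration $0,\varepsilon,\delta,\delta+\varepsilon$ does in fact work, but only with these same two vectors. Again $\bu^Tf[M]\bv=0$, and $\bv^Tf[M]\bv=4-4\cosh\varepsilon+2(\cosh\delta-\cosh(\delta-\varepsilon))=2\varepsilon\sinh\delta+O(\varepsilon^2)>0$ for small $\varepsilon$. That computation is exactly what your proposal leaves undone.
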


\begin{proof}
It is immediate that $\vartheta^{-1}\bigl( \{ 0 \} \bigr) = \{ 0 \}$, and
$\vartheta \circ d$ is a metric by Proposition~\ref{bound-metric}.
Furthermore, if $h := \delta / 2$ and
\[
x( i ) := ( \cosh( i h ), \sinh( i h ), 0, 0, \ldots ) \in \cL \qquad ( i = 1, 2, 3, 4)
\]
then $[ x( i ), x( j ) ] = \cosh\bigl( ( i - j ) h )$.
Hence if $M := \bigl[ [ x( i ), x( j ) ] \bigr]_{i, j = 1}^4 \in \cM_{1+}$, $a := \cosh h$
and $b := \cosh \delta = \cosh( 2 h ) = 2 a^2 - 1$ then
\[
f[ M ] = \begin{bmatrix}
1 & a & b & b \\[1ex] a & 1 & a & b \\[1ex] b & a & 1 & a \\[1ex] b & b & a & 1
\end{bmatrix}
\not\in \cM_{1+},
\]
since
\[
\bu^T f[ M ] \bu = 4 + 6 a + 6b > 0 \quad \text{and} \quad %
\bv^T f[ M ] \bv = 4 - 6 a + 2 b = 2 ( a - 1 ) ( 2 a - 1 ) > 0
\]
for orthogonal vectors $\bu := ( 1, 1, 1, 1 )^T$ and $\bv := ( 1, {-1}, 1, {-1} )^T$
with $\bu^T f[ M ] \bv = 0$.
\end{proof}

\begin{proposition}\label{examplematrix}
Given any $a$, $b \in \R$, the matrix
\[
A = M_{a, b} := \begin{bmatrix} 1 & a & b \\ a & 1 & a \\ b & a & 1 \end{bmatrix}
\]
has eigenvalues
\[
1 - b, \qquad 1 + \frac{1}{2} \bigl( b - \sqrt{8 a^2 + b^2} \bigr) \quad \text{and} \quad %
1 + \frac{1}{2} \bigl( b + \sqrt{8 a^2 + b^2} \bigr).
\]
If $a = b$ then $A$ has eigenvalues $1 - a$ and $2 a + 1$ with multiplicities $2$ and $1$ respectively
and if $A \in \cM$ then $A$ has exactly one positive eigenvalue if and only if $b \leq 2 a^2 - 1$.
\end{proposition}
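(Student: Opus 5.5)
The eigenvalue computation comes first, by exploiting the reflection symmetry of $A$ that exchanges the first and third coordinates. The vector $( 1, 0, -1 )^T$ is antisymmetric under this swap, and
\[
A ( 1, 0, -1 )^T = ( 1 - b, 0, b - 1 )^T,
\]
so $1 - b$ is an eigenvalue. Its orthogonal complement is spanned by $( 1, 0, 1 )^T / \sqrt{2}$ and $( 0, 1, 0 )^T$. In this orthonormal basis, $A$ acts as the $2 \times 2$ matrix
\[
\begin{bmatrix} 1 + b & \sqrt{2} a \\ \sqrt{2} a & 1 \end{bmatrix}.
\]
This matrix has trace $2 + b$ and determinant $1 + b - 2 a^2$. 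Its eigenvalues are therefore
\[
1 + \tfrac{1}{2} \bigl( b \pm \sqrt{b^2 + 8 a^2} \bigr),
\]
as claimed.

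Next, the case $a = b$ follows by direct substitution. The discriminant becomes $\sqrt{9 a^2} = 3 | a |$, so the two remaining eigenvalues are $1 + ( a \pm 3 | a | ) / 2$, namely $1 - a$ and $1 + 2 a$ (taking either sign of $a$). Together with the first eigenvalue $1 - b = 1 - a$, this gives $1 - a$ with multiplicity $2$ and $2 a + 1$ with multiplicity $1$.

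For the final claim, assume $A \in \cM$, so that $a, b \ge 1$. Then $1 - b \le 0$ and the eigenvalue $1 + \tfrac{1}{2} ( b + \sqrt{8 a^2 + b^2} )$ is positive. So $A$ has exactly one positive eigenvalue if and only if the middle eigenvalue $1 + \tfrac{1}{2} ( b - \sqrt{8 a^2 + b^2} )$ is non-positive. Rearranging, this is $\sqrt{8 a^2 + b^2} \ge b + 2$, where both sides are positive. Squaring gives
\[
8 a^2 + b^2 \ge b^2 + 4 b + 4 \iff b \le 2 a^2 - 1.
\]
Equivalently, the $2 \times 2$ block above has positive trace, so it has one non-positive eigenvalue exactly when its determinant $1 + b - 2 a^2$ is $\le 0$.

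There is no genuine obstacle here. The only care needed is to remember that eigenvalues are counted with multiplicity: when $b = 1$ the eigenvalue $1 - b$ equals $0$, which is non-positive, so it causes no problem.
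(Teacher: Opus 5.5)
Your proof is correct and is essentially the paper's argument. The paper states the factorisation $\det(\lambda I - A) = (\lambda - 1 + b)(\lambda^2 - (b+2)\lambda - 2a^2 + b + 1)$ and calls the rest ``routine''. Your compression of $A$ to the span of $(1,0,1)^T/\sqrt{2}$ and $(0,1,0)^T$ simply derives that factorisation, since your $2 \times 2$ block has exactly the quadratic factor as its characteristic polynomial, and your positive-trace and determinant check supplies the routine final step.
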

\begin{proof}
It is readily verified that
\[
\det( \lambda I - A ) = ( \lambda - 1 + b ) ( \lambda^2 - ( b + 2 ) \lambda - 2 a^2 + b + 1 ).
\]
The claims follow in a routine manner.
\end{proof}

\begin{proposition}\label{example2matrix}
Given any $a$, $b$, $c \in \R$ and $m \in \N \cap [ 2, \infty )$, the $2 m \times 2 m$ block matrix
\[
A = N_{a, b, c}^{(m)} := \begin{bmatrix}
B_a  & b \bone_m \bone_m^T \\[1ex] b \bone_m \bone_m^T & B_c
\end{bmatrix},
\]
where $B_x := ( 1 - x ) \Id_{m \times m} + x \bone_m \bone_m^T$ for any $x \in \R$,
has eigenvalues $1 - a$ and $1 - c$ of total multiplicity $2 m - 2$,
and
\[
\frac{1}{2} \bigl( 2 + ( m - 1 ) ( a + c ) \pm \sqrt{ ( m - 1 )^2 ( a - c )^2 + 4 m^2 b^2} \bigr).
\]
Thus $A \in \cM^*$ has exactly one positive eigenvalue if and only if
\[
m^2 b^2 \ge ( 1 + ( m - 1 ) a ) ( 1 + ( m - 1 ) c ).
 \]
\end{proposition}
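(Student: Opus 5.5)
The plan is to diagonalise $A$ directly by splitting $\R^{2m}$ into the two block-constant vectors and their orthogonal complement, and then to read off the inertia from a $2 \times 2$ matrix.

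First, take any $\bu \in \R^{2m}$ supported on the first block, with $\bone_m^T \bu_1 = 0$, where $\bu = ( \bu_1, \bzero )$. Then $B_a \bu_1 = ( 1 - a ) \bu_1$ and $b \bone_m \bone_m^T \bu_1 = \bzero$, so $A \bu = ( 1 - a ) \bu$. This gives the eigenvalue $1 - a$ with multiplicity at least $m - 1$. The same argument on the second block gives $1 - c$ with multiplicity at least $m - 1$.

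The orthogonal complement of these $2m - 2$ vectors is spanned by $\bv_1 = m^{-1/2} ( \bone_m, \bzero )$ and $\bv_2 = m^{-1/2} ( \bzero, \bone_m )$, and this span is invariant under $A$. With respect to this orthonormal pair, $A$ acts as
\[
\begin{bmatrix} 1 + ( m - 1 ) a & m b \\ m b & 1 + ( m - 1 ) c \end{bmatrix},
\]
since $\bone_m^T B_x \bone_m = m + m ( m - 1 ) x$ and $\bone_m^T ( b \bone_m \bone_m^T ) \bone_m = m^2 b$. Its trace is $2 + ( m - 1 )( a + c )$ and its discriminant is $( m - 1 )^2 ( a - c )^2 + 4 m^2 b^2$. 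This yields the two stated eigenvalues, and the multiplicity count $2m - 2 + 2 = 2m$ confirms that the list of eigenvalues is complete.

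For the final claim, assume $A \in \cM^*$, so that $a, b, c \ge 1$. Then $1 - a \le 0$ and $1 - c \le 0$, so all positive eigenvalues come from the $2 \times 2$ block. That block has trace at least $2 > 0$, so its larger eigenvalue is positive. Hence $A$ has exactly one positive eigenvalue if and only if the smaller eigenvalue of the block is non-positive. For a symmetric $2 \times 2$ matrix with positive trace, this holds exactly when its determinant is non-positive, that is,
\[
( 1 + ( m - 1 ) a )( 1 + ( m - 1 ) c ) - m^2 b^2 \le 0 .
\]
This is the stated inequality.

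There is no real obstacle. The only points needing care are checking that the constant-block span is invariant, which is what makes the $2 \times 2$ reduction exact, and using $a, c \ge 1$ to ensure that the eigenvalues $1 - a$ and $1 - c$ are non-positive.
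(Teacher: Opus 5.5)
Your proposal is correct and follows the paper's own proof: eigenvectors supported on a single block and orthogonal to $\bone_m$ give the eigenvalues $1 - a$ and $1 - c$, and $A$ acts on the span of the two block-indicator vectors as the same $2 \times 2$ matrix the paper uses. Your explicit argument for the final claim (that $a, c \ge 1$ because $m \ge 2$, the trace of the $2 \times 2$ block is positive, and hence the determinant test applies) spells out what the paper leaves as ``the result follows''.
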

\begin{proof}
If $\bv \in \R^m$ is orthogonal to $\bone_m$
then $( \bv \oplus \bzero_m )^T$ and $( \bzero_m \oplus \bv )^T$ are eigenvectors of $A$
with eigenvalues $1 - a$ and $1 - c$, respectively. This proves the first claim.

On the space spanned by $( \bone_m \oplus \bzero_m )^T$ and $( \bzero_m \oplus \bone_m )^T$,
the matrix $A$ acts as
\[
\begin{bmatrix} 1 - a + m a & m b \\ m b & 1 - c + m c \end{bmatrix}.
\]
The result follows.
\end{proof}

\begin{proposition}\label{schwartz}
For any $b \in ( 0, \infty )$,
the function $x \mapsto ( \sech x )^b$ is a Schwartz function on $\R$.
\end{proposition}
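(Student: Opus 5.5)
We have to show that $g(x) := (\sech x)^b$ is smooth on $\R$ and that $x^m g^{(k)}(x)$ is bounded on $\R$ for all integers $m, k \ge 0$. Smoothness is immediate: $\cosh x \ge 1 > 0$, so $\sech x$ is smooth and positive, and $t \mapsto t^b$ is smooth on $(0, \infty)$.

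For the derivatives, we will prove by induction on $k$ that
\[
g^{(k)}(x) = (\sech x)^b \, P_k(\tanh x) \qquad (x \in \R),
\]
where $P_k$ is a polynomial with real coefficients. For $k = 0$ we take $P_0 \equiv 1$. For the inductive step, we use
\[
\frac{\rd}{\rd x} (\sech x)^b = -b (\sech x)^b \tanh x
\qquad \text{and} \qquad
\frac{\rd}{\rd x} \tanh x = 1 - \tanh^2 x .
\]
The product rule then gives
\[
g^{(k+1)}(x) = (\sech x)^b \bigl( -b \, y \, P_k(y) + (1 - y^2) P_k'(y) \bigr) \Big|_{y = \tanh x},
\]
so we may set $P_{k+1}(y) := -b y P_k(y) + (1 - y^2) P_k'(y)$, which is again a polynomial.

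Since $|\tanh x| < 1$, there is a constant $c_k$, namely the maximum of $|P_k|$ on $[-1, 1]$, such that
\[
|g^{(k)}(x)| \le c_k (\sech x)^b \le c_k 2^b e^{-b |x|},
\]
using $\cosh x \ge e^{|x|}/2$. Hence
\[
|x^m g^{(k)}(x)| \le c_k 2^b |x|^m e^{-b|x|}.
\]
The right-hand side is bounded on $\R$ because $b > 0$: the function $|x|^m e^{-b|x|}$ is continuous and tends to $0$ as $|x| \to \infty$. This gives all the Schwartz seminorm bounds.

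The only step needing care is the inductive form of the derivatives, which is what makes the bound uniform in $x$. The rest is routine estimation, with the exponential decay coming from $b > 0$.
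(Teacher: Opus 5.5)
Your proof is correct and follows essentially the same route as the paper's: an induction showing that every derivative of $(\sech x)^b$ is built from powers of $\sech x$ and $\tanh x$, followed by the bounds $|\tanh x| \le 1$ and $\sech x \le 2e^{-|x|}$. The only cosmetic difference is that the paper keeps $\frac{\rd}{\rd x}\tanh x = \sech^2 x$ and tracks terms $(\sech x)^c(\tanh x)^m$ with $c \ge b$, whereas you rewrite $\sech^2 x = 1 - \tanh^2 x$ so that a single factor $(\sech x)^b$ multiplies a polynomial in $\tanh x$.
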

\begin{proof}
We recall that
\[
\frac{\rd}{\rd x} \sech x = -\sech x \tanh x \qquad \text{and} \qquad %
\frac{\rd}{\rd x} \tanh x = \sech^2 x \qquad \text{for any } x \in \R.
\]
Furthermore, we have that
\[
| \sech x | \leq 2 e^{-| x |} \qquad \text{and} \qquad | \tanh x | \leq 1 %
\qquad \text{for any } x \in \R.
\]
We claim that the $n$th derivative of $x \mapsto ( \sech x )^b$
is a finite linear combination of terms of the form
$( \sech x )^c ( \tanh x )^m$, where $c \geq b$ and $m \in \Z_+$.
This is true for $n = 1$, so holds by induction, as
\[
\frac{\rd}{\rd x} ( \sech x )^c ( \tanh x )^m = %
-c ( \sech x )^c ( \tanh x )^{m + 1} + m ( \sech x )^{c + 2} ( \tanh x )^{m - 1}.
\]
The result follows.
\end{proof}

\begin{proposition}\label{cosech}
For any $b \in ( 0, \infty )$ and any $n \in \N$,
the function $x \mapsto x^n \cosech( b x )$ is integrable on $\R$.
\end{proposition}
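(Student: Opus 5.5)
The claim concerns $x \mapsto x^n \cosech( b x )$ for $b > 0$ and $n \in \N$. The plan is to bound this function by an integrable one, splitting at $| x | = 1$ and treating the origin and the tails separately. Note first that $x \mapsto x^n \cosech( b x )$ is defined and continuous on $\R \setminus \{ 0 \}$, and that $\cosech( b x ) = 2 / ( e^{b x} - e^{-b x} )$ is odd. Hence $| x^n \cosech( b x ) | = | x |^n \cosech( b | x | )$ is even, and it suffices to prove integrability on $( 0, \infty )$.

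Near the origin we use the elementary inequality $\sinh y \ge y$ for $y \ge 0$, which follows from the Taylor series of $\sinh$, all of whose coefficients are non-negative. It gives
\[
0 < x^n \cosech( b x ) \le x^n / ( b x ) = x^{n - 1} / b \le 1 / b \qquad ( 0 < x \le 1 ),
\]
because $n \ge 1$. The integrand is therefore bounded on $( 0, 1 ]$, and indeed extends continuously to $0$, so it is integrable there. This is the only place where $n \ge 1$ is needed: for $n = 0$ the integrand would behave like $1 / ( b x )$ and fail to be integrable.

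For $x \ge 1$ we write $\sinh( b x ) = \tfrac12 e^{b x} ( 1 - e^{-2 b x} ) \ge \tfrac12 ( 1 - e^{-2 b} ) e^{b x}$. This yields
\[
x^n \cosech( b x ) \le \frac{2}{1 - e^{-2 b}} \, x^n e^{-b x},
\]
and the right-hand side is integrable on $[ 1, \infty )$. Its integral is at most a constant times $\Gamma( n + 1 ) / b^{n + 1}$, or one may simply note that $x^{n + 2} e^{-b x}$ is bounded, so that the integrand is $O( x^{-2} )$.

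Combining the two ranges gives integrability on $( 0, \infty )$, and hence on $\R$ by evenness. No step is genuinely difficult; the only point needing care is the behaviour at the origin, and there the hypothesis $n \in \N$ is exactly what is required.
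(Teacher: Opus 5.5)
Your proof is correct and follows essentially the paper's approach: the paper also handles the origin through the removable singularity ($x/\sinh(bx) \to 1/b$, the counterpart of your bound $\sinh y \ge y$) and the tails through a lower bound on $\sinh$. The one difference is the tail estimate. The paper keeps a single Taylor term, $|\sinh(bx)| \ge |bx|^{2n+1}/(2n+1)!$, which gives the polynomial bound $C|x|^{-n-1}$; that bound is integrable at infinity only because $n \ge 1$. Your exponential bound $\sinh(bx) \ge \tfrac12(1-e^{-2b})e^{bx}$ for $x \ge 1$ decays faster, so in your argument the hypothesis $n \ge 1$ is needed only at the origin.
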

\begin{proof}
Note first that
\[
\lim_{x \to 0} \frac{x}{\sinh( b x )} = %
\lim_{x \to 0} \frac{1}{b \cosh( b x )} = \frac{1}{b},
\]
so the function $x \mapsto x^n \cosech( b x )$ can be made continuous
at the origin. Furthermore, if $x \neq 0$ then
\[
\Bigl| \frac{x^n}{\sinh( b x )} \Bigr| \leq %
\frac{( 2 n + 1 )! | x |^n}{b^{2 n + 1} | x |^{2 n + 1}} = %
\frac{( 2 n + 1 )!}{b^{2 n + 1}} \, | x |^{{-n} - 1}
\]
and the result follows.
\end{proof}

\begin{proposition}\label{sech}
If $x \in \R$  and $b \in ( 0, \infty )$ then
\[
\int_{\R_+} \cos( x s ) \sech( b s ) \std s = %
\frac{\pi}{2 b} \sech( \pi x / 2 b ),
\]
so
\begin{align*}
\sech x & = \int_{\R_+} \cos( x s ) \sech( \pi s / 2 ) \std s \\[1ex]
\textrm{and} \quad \sech^2 x & = %
\int_{\R_+} \cos( x s ) \, s \cosech( \pi s / 2 ) \std s.
\end{align*}
\end{proposition}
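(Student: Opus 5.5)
The first identity is the classical Fourier transform of the hyperbolic secant. The plan is to prove the case $b=1$ by contour integration and then obtain general $b>0$ by scaling; the two special formulas then follow by specialisation and by differentiating with respect to $x$. Since $\sech$ is even, we have $\int_{\R_+} \cos( x s ) \sech( b s ) \std s = \frac12 \int_\R e^{i x s} \sech( b s ) \std s$. The substitution $u = b s$ turns this into $\frac{1}{2b} \int_\R e^{i (x/b) u} \sech u \std u$. It therefore suffices to show that
\[
\int_\R e^{i y u} \sech u \std u = \pi \sech( \pi y / 2 ) \qquad ( y \in \R ).
\]

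For this I would integrate $g( z ) := e^{i y z} \sech z$ around the rectangle with vertices $\pm R$ and $\pm R + i \pi$. Because $\cosh( z + i \pi ) = -\cosh z$, the top edge contributes $e^{-\pi y}$ times the bottom-edge integral. The vertical sides vanish as $R \to \infty$: on them $| \cosh z | \ge \sinh R$ while $| e^{i y z} | \le e^{\pi | y |}$. The only pole inside is at $z = i \pi / 2$, with residue $e^{-\pi y / 2} / \sinh( i \pi / 2 ) = -i e^{-\pi y / 2}$. Hence
\[
( 1 + e^{-\pi y} ) I = 2 \pi i ( -i e^{-\pi y / 2} ) = 2 \pi e^{-\pi y / 2},
\]
which gives $I = \pi \sech( \pi y / 2 )$. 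This residue bookkeeping is the only step needing care; everything else is routine.

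The first special formula follows by taking $b = \pi/2$, which gives $\int_{\R_+} \cos( x s ) \sech( \pi s/2 ) \std s = \sech x$.

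For the second, I would start from the same transform with the roles interchanged. For fixed $x$, Fourier inversion (or the first identity with the roles of $x$ and $s$ swapped) gives
\[
\sech( \pi s / 2 ) = \frac{2}{\pi} \int_{\R_+} \cos( s u ) \sech u \std u .
\]
Rather than that route, it is simpler to differentiate the identity $\sech x = \int_{\R_+} \cos( x s ) \sech( \pi s / 2 ) \std s$. I would instead establish directly that $\sech^2$ has Fourier transform $\int_\R e^{i x s} \sech^2 x \std x = \pi s \cosech( \pi s / 2 )$. For this I would integrate $e^{i s z} \sech^2 z$ around the same rectangle. Now $\sech^2( z + i \pi ) = \sech^2 z$, so the top edge gives a factor $e^{-\pi s}$. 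There is a double pole at $i \pi/2$. Near it $\cosh z = i \sinh( w )$ with $w = z - i\pi/2$, so $\sech^2 z = -1/\sinh^2 w \approx -w^{-2}$. The residue is therefore $-\frac{\rd}{\rd z} e^{i s z} |_{i\pi/2} = -i s e^{-\pi s / 2}$. This yields
\[
( 1 - e^{-\pi s} ) J = 2 \pi s e^{-\pi s / 2},
\]
so $J = \pi s / \sinh( \pi s / 2 )$, with the value at $s = 0$ obtained by continuity. Fourier inversion, justified because both functions are integrable and continuous (Propositions~\ref{schwartz} and~\ref{cosech}), together with evenness then gives
\[
\sech^2 x = \frac{1}{2\pi} \int_\R e^{-i x s} \pi s \cosech( \pi s/2 ) \std s = \int_{\R_+} \cos( x s ) \, s \cosech( \pi s / 2 ) \std s.
\]
The first identity can be handled by the same inversion, so the only genuine obstacle is the correct computation of the residues.
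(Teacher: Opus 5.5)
Your proof is correct. For the first identity you follow the paper's argument: the rectangle with vertices $\pm R$ and $\pm R + \pi i$, the relation $\cosh( z + \pi i ) = -\cosh z$, the residue $-i e^{-\pi y / 2}$ at $\pi i / 2$, and then rescaling in $b$. Your estimate on the vertical sides keeps the factor $e^{\pi | y |}$, so it is slightly more careful than the paper's.

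For $\sech^2$ your route is genuinely different. The paper never computes the Fourier transform of $\sech^2$ and never uses an inversion theorem. It squares the representation $\sech x = \frac{1}{2} \int_\R e^{i x u} \sech( \pi u / 2 ) \std u$ and uses Fubini to write $\sech^2 x$ as the Fourier integral of the convolution $\frac{1}{4} \int_\R \sech( \pi u / 2 ) \sech( \pi ( v - u ) / 2 ) \std u$. It then evaluates this convolution in closed form, via the substitution $t = e^{\pi u / 2}$ and partial fractions, obtaining $v / ( 2 \sinh( \pi v / 2 ) )$. You instead do a second contour integration, now with a double pole, to get $\int_\R e^{i s x} \sech^2 x \std x = \pi s \cosech( \pi s / 2 )$, and then invert; this is where you need Propositions~\ref{schwartz} and~\ref{cosech}. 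The paper's approach recycles the identity just proved and needs only a real-variable integral. Yours is methodologically uniform and produces the forward transform of $\sech^2$ explicitly, so it would extend to higher powers of $\sech$ through higher-order poles. The cost is an appeal to Fourier inversion.

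Two small points on the write-up. First, your residue computation relies on the Laurent expansion $1 / \sinh^2 w = w^{-2} - \frac{1}{3} + O( w^2 )$ having no $w^{-1}$ term, which holds because $\sinh^2$ is even. This is what turns your ``$\approx$'' into an exact residue. Second, you should delete the two suggestions you abandon before the contour computation. In particular, differentiating $\sech x = \int_{\R_+} \cos( x s ) \sech( \pi s / 2 ) \std s$ gives a representation of $\sech x \tanh x$, not of $\sech^2 x$, so that route leads nowhere.
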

\begin{proof}
It is readily verified that $\cosh$ has a simple zero at $( 2 n + 1 ) \pi i / 2$ for any $n \in \Z$
and is non-zero otherwise. Furthermore, if $x \in \R$ then
\[
\lim_{z \to \pi i / 2} \frac{( z - ( \pi i / 2 ) \bigr) e^{i x z}}{\cosh z} = %
\frac{e^{{-x} \pi / 2}}{\sinh( \pi i / 2 )} = {-i} e^{{-\pi} x / 2}.
\]
Hence, by Cauchy's residue theorem, if $N \in \N$ then
\[
\int_{[ {-N}, N ] \cup [ N, N + \pi i ] \cup [ N + \pi i, {-N} + \pi i ] \cup [ {-N} + \pi i, {-N} ]} %
\frac{e^{i x z}}{\cosh z} \std z = 2 \pi e^{{-\pi} x / 2},
\]
where $[ z, w ]$ denote the directed line segment from $z$ to $w$.

If $z = N + i y \in [ N, N + \pi i ]$ then
\[
\Bigl| \frac{e^{i x z}}{e^z + e^{-z}} \Bigr| = \frac{e^{{-x} y} e^N}{| e^{2 N + 2 i y} + 1 |} \leq %
\frac{e^N}{e^{2 N} - 1}
\]
and similarly if $z = {-N} + i y \in [ -N - \pi i, -N ]$ then
\[
\Bigl| \frac{e^{i x z}}{e^z + e^{-z}} \Bigr| = \frac{e^{{-x} y} e^N}{| 1 + e^{2 N - 2 i y} |} \leq %
\frac{e^N}{e^{2 N} - 1},
\]
so
\[
\Bigl| \int_{[ N, N + \pi i ] \cup [ {-N} - \pi i, {-N} ]} \frac{e^{i x z}}{\cosh z}  \std z \Bigr| \leq %
\frac{4 \pi e^N}{e^{2 N} - 1} \to 0 \quad \text{as } N \to \infty.
\]
Furthermore, we have that
\[
\int_{[ {-N}, N ]} \frac{e^{i x z}}{\cosh z} \std z = %
\int_{[ {-N}, N ]} \frac{e^{i x u}}{\cosh u} \std u = %
2 \int_{[ 0, N ]} \cos( x t ) \sech t \std t
\]
and
\[
\int_{[ {-N} + \pi i, N + \pi i ]} \frac{e^{i x z}}{\cosh z} \std z = %
\int_{[ {-N}, N ]} \frac{e^{i x u} e^{-\pi x}}{\cosh( u + \pi i )} \std u = %
-2 e^{-\pi x} \int_{[ 0, N ]} \cos( x t ) \sech t \std t,
\]
since
\[
\cosh( u + \pi i ) = \frac{e^u e^{\pi i} + e^{-u} e^{{-\pi} i}}{2} = {-\cosh u}.
\]
Rearranging and letting $N \to \infty$, we have that
\[
\int_{\R_+} \cos( x t ) \sech t \std t = %
\frac{2 \pi e^{-\pi x / 2}}{2 ( 1 + e^{-\pi x} )} = \frac{\pi}{2 \cosh( \pi x / 2 )}
\]
and hence
\[
\int_{\R_+} \cos( x s ) \sech( b s ) \std s = %
\frac{1}{b} \int_{\R_+} \cos( x r / b ) \sech( r ) \std r = %
\frac{\pi}{2 b} \sech( \pi x / 2 b ),
\]
as claimed.

The second identity follows immediately. For the third, note that the second gives that
\[
\sech^2 x = \frac{1}{4} %
\int_\R \int_\R e^{i x v} \sech( \pi u / 2 ) \sech( \pi ( v - u ) / 2 ) \std u \std v.
\]
Now,
\begin{alignat*}{2}
\frac{1}{4} \int_\R \sech( \pi u / 2 ) & \sech( \pi ( v - u ) / 2 ) \std u \\[1ex]
 & = \int_\R \frac{1}{e^{\pi u / 2} + e^{{-\pi} u / 2}}%
\frac{1}{e^{\pi ( v - u ) / 2} + e^{\pi ( u - v ) / 2}} \std u \\[1ex]
 & = \int_{\R_+} \frac{1}{t + t^{-1}}\frac{1}{e^{\pi v / 2} t^{-1} + %
 e^{-\pi v / 2} t} \frac{2 \std t}{\pi t} && ( t = e^{\pi u / 2} ) \\[1ex]
 & = \frac{2 b}{\pi} \int_{\R_+} \frac{t}{( t^2 + 1 ) ( t^2 + b^2 )} \std t %
 && ( b = e^{\pi v / 2} ) \\[1ex]
 & = \frac{2 b}{\pi ( b^2 - 1 )} %
 \int_{\R_+} \frac{t}{t^2 + 1} - \frac{t}{t^2 + b^2} \std t %
 & \qquad & ( b \neq 1 ) \\[1ex]
 & = \frac{1}{2 \pi \sinh( \pi v / 2 )} %
 \biggl[ \log\Bigl( \frac{t^2 + 1}{t^2 + b^2} \Bigr) \biggr]_0^\infty \\[1ex]
 & = \frac{v}{2 \sinh( \pi v / 2 )},
 \end{alignat*}
which is an even function of $v$, and therefore
\[
\sech^2 x = \int_{\R_+} \cos( x s ) \, s \cosech( \pi s / 2 ) \std s.\qedhere
\]
\end{proof}

\begin{proposition}\label{beta}
If $x \in \R$ and $b \in ( 0, \infty )$ then
\[
\int_{\R_+} \cos( x t ) ( \sech t )^b \std t = %
2^{b - 2} \Beta\Bigl( \frac{b + i x}{2}, \frac{b - i x}{2} \Bigr) = %
\frac{2^{b - 2}}{\Gamma( b )} %
\Bigl| \Gamma\Bigl( \frac{b + i x}{2} \Bigr) \Bigr|^2 \geq 0,
\]
where
\[
\Beta( z, w ) := \int_{[ 0, 1 ]} t^{z - 1} ( 1 - t )^{w - 1} \std t = %
\frac{\Gamma( z ) \Gamma( w )}{\Gamma( z + w )} \qquad \text{and} \qquad %
\Gamma( z ) := \int_{\R_+} t^{z - 1} e^{-t} \std t 
\]
for any $z$, $w \in \C$ such that $\Re z > 0$ and $\Re w > 0$.
\end{proposition}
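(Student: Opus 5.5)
We compute the Fourier-type integral by substituting \(u = e^{-2t}\). The integral then becomes a beta integral whose exponents are complex.

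First write \((\sech t)^b = 2^b e^{-bt}(1+e^{-2t})^{-b}\). Since the integrand \(\cos(xt)(\sech t)^b\) is even in \(t\), we have
\[
\int_{\R_+} \cos(xt)(\sech t)^b \std t = \frac{1}{2}\int_{\R} e^{ixt}(\sech t)^b \std t .
\]
Absolute convergence holds because \((\sech t)^b \le 2^b e^{-b|t|}\).

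Next substitute \(u = e^{-2t}\), so that \(\rd t = -\rd u/(2u)\) and \(e^{ixt} = u^{-ix/2}\). The integral over \(\R\) becomes
\[
2^{b-1}\int_0^\infty u^{(b-ix)/2 - 1}(1+u)^{-b} \std u .
\]
We then use the standard representation
\[
\int_0^\infty u^{z-1}(1+u)^{-z-w}\std u = \Beta(z,w), \qquad \Re z,\ \Re w>0 .
\]
This follows from the substitution \(u = v/(1-v)\) in the defining integral of \(\Beta\). Here \(z = (b-ix)/2\) and \(w = (b+ix)/2\), so \(z+w=b\) and both real parts equal \(b/2>0\). Halving the result gives
\[
\int_{\R_+}\cos(xt)(\sech t)^b\std t = 2^{b-2}\Beta\Bigl(\frac{b+ix}{2},\frac{b-ix}{2}\Bigr),
\]
using the symmetry of \(\Beta\) in its two arguments.

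For the final equality, apply \(\Beta(z,w)=\Gamma(z)\Gamma(w)/\Gamma(z+w)\) with \(z+w=b\). Since \(\Gamma(\bar z)=\overline{\Gamma(z)}\), which holds because \(\Gamma\) is real on the positive axis, we get \(\Gamma\bigl((b-ix)/2\bigr)=\overline{\Gamma\bigl((b+ix)/2\bigr)}\). The numerator is therefore \(|\Gamma((b+ix)/2)|^2\ge 0\), and \(\Gamma(b)>0\), which gives the stated non-negativity.

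The one point that needs care is the identity \(\Beta(z,w)=\Gamma(z)\Gamma(w)/\Gamma(z+w)\) and the half-line beta integral for complex arguments. Both are classical for \(\Re z,\Re w>0\), either by Fubini on \(\Gamma(z)\Gamma(w)\) or by analytic continuation from real arguments, and I would quote them. Everything else is a routine change of variables.
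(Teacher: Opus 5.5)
Your proof is correct and is essentially the paper's argument. The substitution $u = e^{-2t}$ (the paper's $s = e^{-2y}$) turns the integral into the half-line beta integral $\int_0^\infty u^{z-1}(1+u)^{-z-w}\,\rd u$ with $z + w = b$, and the final equality then follows from $\Beta(z,w) = \Gamma(z)\Gamma(w)/\Gamma(z+w)$ together with $\Gamma(\bar z) = \overline{\Gamma(z)}$; the only cosmetic difference is that the paper runs the computation in reverse, starting from $\Beta$ and arriving at the cosine integral.
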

\begin{proof}
The substitution $t = ( 1 + s )^{-1}$ gives that
\[
\int_{[ 0, 1 ]} t^{z - 1} ( 1 - t )^{w - 1} \std t = %
\int_{\R_+} ( 1 + s )^{-z - w} s^{w - 1} \std s.
\]
Hence
\begin{align*}
\Beta\Bigl( \frac{b + i x}{2}, \frac{b - i x}{2} \Bigr) & = %
\int_{\R_+} ( 1 + s )^{-b} s^{( b - i x ) / 2} \, \frac{\rd s}{s} \\[1ex]
 & = \int_{\R_+} ( s^{-1 / 2} + s^{1 / 2} )^{-b} s^{-i x / 2} \, \frac{\rd s}{s} \\[1ex]
 & = 2 \int_{\R} ( e^y + e^{-y} )^{-b} e^{i x y} \std y %
 \hspace{6em} ( s = e^{-2 y} ) \\[1ex]
 & = 2^{2 - b} \int_{\R_+} \cos( x t ) ( \sech t )^b \std t.
\end{align*}
The final claim now follows from the well known identity linking the beta and gamma functions
and the fact that $\overline{\Gamma( z )} = \Gamma( \overline{z} )$ for any $z \in \C$ 
with $\Re z > 0$.
\end{proof}

\begin{proposition}\label{vNS}
Let $\mu$ be a positive Radon measure on $( 0, \infty )$. The function
\[
f : \R \to \R_+; \ t \mapsto \int_{( 0, \infty )} \frac{1 - \cos( s t )}{s^2} \std\mu( s )
\]
is bounded if and only if the integral
\[
I := \int_{( 0, \infty )} \frac{\rd\mu( s )}{s^2}
\]
is finite.
\end{proposition}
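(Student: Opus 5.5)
The plan is to prove the two implications separately; the only real content is the direction ``bounded implies $I<\infty$'', which will follow from Fatou's lemma after averaging in $t$.

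For the easy direction, suppose $I < \infty$. Since $0 \le 1 - \cos( s t ) \le 2$ for all real $s$ and $t$, we get
\[
0 \le f( t ) \le 2 \int_{( 0, \infty )} \frac{\rd\mu( s )}{s^2} = 2 I \qquad ( t \in \R ),
\]
so $f$ is bounded.

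For the converse, suppose $0 \le f( t ) \le K$ for all $t \in \R$. Averaging over $t \in [ 0, T ]$ and using Tonelli's theorem (the integrand is non-negative), we obtain
\[
K \ge \frac{1}{T} \int_0^T f( t ) \std t = \int_{( 0, \infty )} \Bigl( 1 - \frac{\sin( s T )}{s T} \Bigr) \frac{\rd\mu( s )}{s^2}.
\]
For each fixed $s > 0$ the integrand $\bigl( 1 - \sin( s T ) / ( s T ) \bigr) s^{-2}$ is non-negative and tends to $s^{-2}$ as $T \to \infty$. Fatou's lemma then gives $I \le \liminf_{T \to \infty} ( \text{right-hand side} ) \le K < \infty$.

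If one prefers to avoid averaging, an alternative is to apply Fatou along a sequence $t_n \to \infty$ with $\cos( s t_n ) \to 0$ $\mu$-a.e. That requires an equidistribution argument, so the averaging route is the one I would use.

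The only point needing care is the justification of Tonelli, which holds because the integrand is non-negative. With that in place, the Fatou step is the key idea and presents no real obstacle. Finally, note that the condition (\ref{mu}) in the parabolic case says $I = \infty$ for the measure restricted to $( 0, \infty )$; this proposition therefore yields the unboundedness of $\Lambda$ used earlier.
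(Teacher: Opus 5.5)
Your proof is correct and follows the same route as the paper, which adapts von Neumann--Schoenberg: the easy bound $f \le 2I$, then averaging $f$ over $[0,T]$ and using Tonelli to replace $1-\cos(st)$ by $1 - \sin(sT)/(sT)$. The only difference is in passing to the limit: you apply Fatou's lemma on all of $(0,\infty)$, whereas the paper truncates to $[\epsilon, a]$, uses dominated convergence there and then lets $\epsilon \to 0$ and $a \to \infty$; both are equally valid.
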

\begin{proof}
We follow the proof of \cite[Theorem~3]{vNS}. 

Since $s \mapsto \bigl( 1 - \cos( s t ) \bigr) / s^2$ has a limit as $s \to 0+$ and
\[
0 \leq \frac{1 - \cos( s t )}{s^2} \leq \frac{2}{s^2},
\]
if $I$ is finite then $f$ is bounded, with $0 \leq f( t ) \leq 2 I$ for all $t \in \R$.

Conversely, suppose $M > 0$ is such that $0 \leq f( t ) \leq M$ for all $t \in \R$ and let
\[
f_{\epsilon, a}( t ) := \int_{[ \epsilon, a ]} \frac{1 - \cos( s t )}{s^2} \std\mu( s ) %
\qquad ( 0 < \epsilon < a < \infty ).
\]
Then, by Tonelli's theorem and the dominated convergence theorem, we have that
\[
b_n := \frac{1}{n} \int_{[ 0, n ]} f_{\epsilon, a}( t ) \std t = %
\int_{[ \epsilon, a ]} \Bigl( 1 - \frac{\sin( n s )}{n s} \Bigr) \frac{\rd\mu( s )}{s^2} %
\to \int_{[ \epsilon, a ]} \frac{\rd\mu( s )}{s^2}
\]
as $n \to \infty$. Furthermore, we know that
\[
0 \leq f_{\epsilon, a}( t ) \leq f( t ) \leq M \qquad \text{for any }  t \in \R,
\]
so $0 \leq b_n \leq M$ for all $n$.
Hence $0 \leq \int_{[ \epsilon, a ]} \rd\mu( s ) / s^2 \leq M$
and, as $M$ is independent of $\epsilon$ and $a$, it follows that $I$ is finite, with $0 \leq I \leq M$.
\end{proof}



\end{document}